%Version submitted to AJM on Jan 15th 2021
%Rejected from AJM; revising post rejetcion 
\documentclass[reqno]{amsart}

\usepackage{xypic,hyperref,todonotes,comment}
\usepackage{graphics,amssymb,multicol}
\usepackage{pagecolor,lipsum}
\usepackage{latexsym}
\usepackage{mathrsfs}
\xyoption{curve}
\tikzstyle{arrow} = [thick,->,>=latex]
\usetikzlibrary{shapes,arrows}
\usepackage[normalem]{ulem}

\definecolor{color1}{HTML}{6E001B}
\definecolor{color2}{HTML}{39E600}
\definecolor{pagecolor}{HTML}{FF8484}

\usepackage{hyperref}
\hypersetup{
colorlinks,
citecolor=color1,
filecolor=blue,
linkcolor=color1,
urlcolor=color1
}

\newcommand{\nocontentsline}[3]{}
\newcommand{\tocless}[2]{\bgroup\let\addcontentsline=\nocontentsline#1{#2}\egroup}

\newtheorem{lemma}{Lemma}[section]
\newtheorem{proposition}[lemma]{Proposition}
\newtheorem{theorem}[lemma]{Theorem}
\newtheorem{corollary}[lemma]{Corollary}
\newtheorem{question}[lemma]{Question}

\newtheorem*{theoremA}{Theorem}

\theoremstyle{definition}
\newtheorem{example}[lemma]{Example}
\newtheorem{definition}[lemma]{Definition}
\newtheorem{remark}[lemma]{Remark}

\newcommand{\mfk}[1]{\mathfrak{#1}}
\newcommand{\mbb}[1]{\mathbb{#1}}
\newcommand{\mcl}[1]{\mathcal{#1}}
\newcommand{\mrm}[1]{\mathrm{#1}}
\newcommand{\msc}[1]{\mathscr{#1}}

\newcommand{\msf}[1]{\mathsf{#1}}

\DeclareMathOperator{\Hom}{Hom}
\DeclareMathOperator{\End}{End}
\DeclareMathOperator{\RHom}{RHom}

\DeclareMathOperator{\Ext}{Ext}

\DeclareMathOperator{\Aut}{Aut}
\DeclareMathOperator{\rep}{rep}
\DeclareMathOperator{\Rep}{Rep}
\DeclareMathOperator{\Spec}{Spec}

\DeclareMathOperator{\Proj}{Proj}

\DeclareMathOperator{\Coh}{Coh}
\DeclareMathOperator{\QCoh}{QCoh}
\DeclareMathOperator{\Supp}{Supp}
\DeclareMathOperator{\ord}{ord}

\DeclareMathOperator{\res}{res}
\DeclareMathOperator{\supp}{supp}
\DeclareMathOperator{\Sym}{Sym}
\DeclareMathOperator{\Ind}{Ind}

\let\oldO\O

\newcommand{\Sing}{\operatorname{Sing}}

\newcommand{\stab}{\operatorname{stab}}
\newcommand{\Stab}{\operatorname{Stab}}
\newcommand{\cSpec}{\mathsf{Spec}}

\newcommand{\ot}{\otimes}
\renewcommand{\1}{\mathbf{1}}
\renewcommand{\O}{\mathscr{O}}
\renewcommand{\hat}{\widehat}
\renewcommand{\tilde}{\widetilde}

\bibliographystyle{plain}
\setcounter{tocdepth}{1}

%\pagecolor{pagecolor}
%\usepackage[notcite,notref]{showkeys}

\title[]{Hypersurface support and prime ideal spectra for stable categories}
\date{\today}

\author{Cris Negron}
\address{Department of Mathematics, University of Southern California, Los Angeles, CA 90007}
\email{cnegron@usc.edu}

\author{Julia Pevtsova}
\address{Department of Mathematics, University of Washington, Seattle, WA 98195}
\email{julia@math.washington.edu}

%_______________________________ -  Background - _____________________________________
\begin{document}
\maketitle

\begin{abstract}
We use hypersurface support to classify thick (two-sided) ideals in the stable categories of representations for several families of finite-dimensional integrable Hopf algebras: bosonized quantum complete intersections, quantum Borels in type $A$, Drinfeld doubles of height $1$ Borels in finite characteristic, and rings of functions on finite group schemes over a perfect field.  We then identify the prime ideal (Balmer) spectra for these stable categories.  In the curious case of functions on a finite group scheme $\mcl{G}$, the spectrum of the category is identified not with the spectrum of cohomology, but with the quotient of the spectrum of cohomology by the adjoint action of the subgroup of connected components $\pi_0(\mcl{G})$ in $\mcl{G}$.
\end{abstract}

\section{Introduction}

The problem of classifying thick tensor ideals in a given tensor triangulated category takes its roots in stable homotopy theory thanks to the groundbreaking work of Devinatz-Hopkins-Smith \cite{devinatz-hopkins-smith88, hopkins-smith98}. It was subsequently taken up to the derived categories of rings and schemes in \cite{hopkins87, neeman92, thomason97} and to the stable category of a finite group in \cite{bensoncarlsonrickard97}, eventually leading to the elegant framework of computing the spectrum of a tensor triangulated category introduced by Balmer \cite{balmer05} and now considered as part of the tensor triangular (or tt-) geometry with numerous classification results coming from all realms of algebraic, topological and geometric contexts.   In this work, we study tensor triangular geometry of stable categories of representations $\stab(\msf{u})$ for certain classes of finite-dimensional {\it integrable} Hopf algebras $\msf{u}$ via hypersurface support.
\par

For the modular representation theory of a finite group (and a finite group scheme) $G$ the classification of thick tensor ideals (equivalently, calculation of the Balmer spectrum) in $\stab(\msf{u})$, where $\msf{u} =kG$ is the group algebra, is achieved by identifying the cohomological support with Carlson's rank variety or the $\pi$-support. This approach uses the cocommutativity of $\msf u$ in an essential way and appears to be very hard to extend to the quantum situation we are interested in, beyond some very special choice of parameters for a quantum linear plane as was done in (\cite{pevtsovawitherspoon15}, see also \cite{pevtsovawitherspoon09, bensonerdmannholloway07}). 

The hypersurface support, a concept introduced for complete intersections in commutative algebra in \cite{eisenbud80, avramovbuchweitz00, avramoviyengar18}, proved to be a more versatile construction.  In \cite{negronpevtsova} we developed hypersurface support for noncommutative complete intersections with an eye towards calculating the Balmer spectrum for $\stab(\msf{u})$ as a main application. 

For such calculations one frequently needs to understand support not only for  $\stab(\msf{u})$, but also for the big stable category $\Stab(\msf{u})$ of {\it all} $\msf{u}$-repesentations in which $\stab(\msf{u})$ resides as the subcategory of rigid compact objects.  The necessary extension of hypersurface support from $\stab(\msf{u})$ to the big stable category $\Stab(\msf{u})$ is provided in \cite{negronpevtsova2}, where its compatibilities with the \emph{triangulated} structure were considered.  Here we analyze compatibilities between hypersurface support and the \emph{monoidal} structure on $\Stab(\msf{u})$, see Definition \ref{def:text} and Section \ref{sect:tensor}, specifically Theorem~\ref{thm:tpp} which is a version of the tensor product property for supports in our situation.
\par

A key feature of hypersurface support is that it is developed for stable categories which are not necessarily symmetric or even braided, and so it provides certain ``data" regarding the behaviors of support for non-braided categories.  One of the original motivations for the present study was to pursue a categorical framing of support, directly via thick tensor ideals, which anticipates and explains the types of phenomena we observe at the levels of cohomological and hypersurface support, e.g.\ \cite[Theorem 10.8, 11.6]{negronpevtsova}.  In this work we argue that notions of \emph{central generation} for tensor ideals connect categorical and cohomological approaches to support in the non-braided context.  These central generation properties are automatically satisfied in the braided setting and provide an explicit link between braided/symmetric studies of tensor triangular geometry and their non-braided counterparts.  We elaborate on these points more below.
\vspace{0.1in}

Let us now discuss the contents of the paper in more detail.  Fix $k$ an arbitrary field of any characteristic.  We recall, from \cite{negronpevtsova}, that a finite-dimensional Hopf algebra is called \emph{integrable} if it admits a smooth deformation $U\to \msf{u}$ by a Noetherian Hopf algebra $U$ which is of finite global dimension.  We require in this case that the parametrizing subalgebra $Z\subset U$ is a central Hopf subalgebra, or more generally coideal subalgebra, in $U$.  This framework covers and unifies many interesting families of Hopf algebras: restricted enveloping algebras of Lie algebras in positive characteristic, small quantum groups and their Borels in characteristic zero, Drinfeld doubles of infinitesimal group schemes of height one in positive characteristic, and commutative complete intersections, including the classical representation theoretic case of the group algebra of an elementary abelian $p$-group over a field of characteristic $p$. 

We recall also that a support theory $(Y,\supp)$ for a tensor triangulated category $\mcl{T}$ is an assignment of a closed subspace $\supp(V)$ in a topological space $Y$ to each $V$ in $\mcl{T}$ which properly acknowledges both the triangulated structure and tensor structure on $\mcl{T}$ (see Sections \ref{sect:supports} and \ref{sect:stick} below).
For an integrable Hopf algebra $\msf{u}$, we use the integration $U\to \msf{u}$ to produce a support theory $(\msf{Y},\supp^{hyp}_\mbb{P})$ for the stable category $\stab(\msf{u})$ of $\msf{u}$-representations, which is referred to as \emph{hypersurface support} \cite{negronpevtsova,negronpevtsova2}.  The target space $\msf{Y}$ for hypersurface support is (generally a finite quotient of) the projective spectrum of cohomology $\Proj\Ext^\ast_{\msf{u}}(k,k)$.  The precise construction of the support theory $(\msf{Y},\supp^{hyp}_\mbb{P})$ for $\stab(\msf{u})$, and its cocompletion $\Stab(\msf{u})$, is recalled in Section \ref{sect:hyp}.
\par

This paper essentially has two halves.  In the first portion of the paper we present a general framing of support, and tt-geometry, for non-braided tensor triangular categories.  The emphasis here is on the manner in which various centralizing hypotheses can be applied to obtain classifications of thick ideals from suitably well-behaved support theories.  In the second portion of the paper we apply this generic framing to show that, for a number of explicit families of such integrable $\msf{u}$, hypersurface support can be used to classify thick ideals in the associated stable category of representations $\stab(\msf{u})$. We also calculate the spectrum $\cSpec(\stab(\msf{u}))$ of prime ideals in $\stab(\msf{u})$ for the given families.  These calculations of the spectrum are new so that, via these classes of examples, we are testing this general theory in a rather direct and substantial way.
\par

For us, the spectrum $\cSpec(\stab(\msf{u}))$ is defined in direct analogy with Balmer's spectrum of a symmetric tensor category \cite{balmer05} (see Section \ref{sect:spec}).  Also, our strategy for classifying thick ideals is motivated by the original approach of Benson-Carlson-Rickard \cite{bensoncarlsonrickard97} (see Section \ref{sect:abstract}).
\par

The families of integrable Hopf algebras which we study in this text all satisfy a Chevalley property (Definition~\ref{eq:337}), which reflects the fact that their categories of representations are formally similar to, say, representations for a Frobenius kernel in a (quantum) Borel. Such ``Borel-like" categories form the foundations of most support theoretic studies in representation theory, see, for example, \cite{quillen71,suslinfriedlanderbendel97}, and also Section \ref{sect:non-braided}.

We provide the proposed classification of thick ideals, and calculate the prime ideal spectrum, for the stable representation categories of the following families of integrable Hopf algebras:
\begin{enumerate}
\item[(F1)]\label{it:1} Quantum complete intersections, aka quantum linear spaces.
\item[(F2)]\label{it:2} Small quantum Borels in type $A$.
\item[(F3)]\label{it:3} Drinfeld doubles $\mcl{D}(B_{(1)})$ for Borel subgroups $B\subset \mbb{G}$ in almost-simple algebraic groups.
\item[(G)]\label{it:G} Algebras of functions $\O(\mcl{G})$ on arbitrary finite group schemes $\mcl{G}$, over a perfect field.
\end{enumerate}
These families are described in more detail in Section \ref{sect:examples}.  We note that our results for the quantum Borel hold at an \emph{arbitrary} odd order parameter $q$, not just $q$ greater than the corresponding Coxeter number, and that our results for the double $\mcl{D}(B_{(1)})$ only require that the characteristic $p$ is very good for the associated Dynkin type. The following theorem summarizes our results for the first three families.  

\begin{theoremA}[\ref{thm:qci_ideals}, \ref{thm:borel_ideals}, \ref{thm:db_ideals}]
For a Hopf algebra $\msf u$ belonging to one of the families {\rm (\hyperref[it:1]{F1})--(\hyperref[it:3]{F3})}, cohomological support $(\Proj \Ext^*_\msf{u}(k,k), \supp^{coh})$ classifies thick ideals in $\stab(\msf u)$. There is furthermore a homeomorphism 
\[
\Proj \Ext^*_\msf{u}(k, k) \overset{\cong}\longrightarrow \cSpec(\stab(\msf u)). 
\]
\end{theoremA}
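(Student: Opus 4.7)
The plan is to apply the abstract classification machinery developed in the first half of the paper, using hypersurface support $(\msf{Y},\supp^{hyp}_\mbb{P})$ as the input support theory. That machinery takes any noetherian support theory satisfying the tensor product property (Theorem~\ref{thm:tpp}), together with appropriate central generation and Chevalley hypotheses on the ambient category, and produces a classification of thick ideals by specialization-closed subsets of the support space, in the spirit of Benson--Carlson--Rickard.

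First, for each family (F1)--(F3), I would verify that the integration $U\to\msf{u}$ is in place and that the target space $\msf{Y}$ of hypersurface support coincides with the full projective spectrum $\Proj\Ext^\ast_\msf{u}(k,k)$, with no finite quotient taken; this is precisely what distinguishes these families from (G). For (F1) the parametrizing subalgebra is central Hopf and the identification is immediate from the construction of $(\msf{Y},\supp^{hyp}_\mbb{P})$. For (F2) and (F3) the identification should match a known presentation of $\Ext^\ast_\msf{u}(k,k)$ against the projectivization of the parametrizing algebra; in particular, for (F3) one needs that the finite group appearing in the general construction acts trivially in this situation.

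Second, I would verify the Chevalley property (Definition~\ref{eq:337}) and the central generation property for each of the three families. The Chevalley property holds because all three families are Borel-like: the simple modules are one-dimensional characters and every representation admits a composition series of such. Central generation of thick ideals is automatic in the braided setting, and so is trivial for (F1) and essentially immediate for (F3), since Drinfeld doubles are braided. For (F2) a quantum Borel is not braided, so one must supply central cohomological generators by hand, most naturally by importing them from the ambient small quantum group, whose cohomology is central and whose restriction controls the cohomology of the Borel.

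With these ingredients in place, the abstract theorem classifies thick ideals in $\stab(\msf{u})$ by specialization-closed subsets of $\Proj\Ext^\ast_\msf{u}(k,k)$, and the homeomorphism $\Proj\Ext^\ast_\msf{u}(k,k)\cong \cSpec(\stab(\msf{u}))$ follows formally: over a noetherian support space, primes correspond to irreducible closed subsets, i.e.\ to points, and Balmer's comparison map (adapted to the non-symmetric setting in Section~\ref{sect:spec}) realizes the bijection as a homeomorphism. I expect the hardest step to be central generation for (F2), since quantum Borels in type $A$ are neither commutative nor braided; making the import of central classes from the ambient small quantum group actually yield generation of \emph{all} thick ideals, rather than merely of the cohomology ring, requires genuine input.
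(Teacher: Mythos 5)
Your overall skeleton (hypersurface support as a lavish theory, the abstract classification for Chevalley categories, then the Balmer-style comparison map) is the paper's, but there are two genuine gaps. First, your plan to identify the target of hypersurface support with $\Proj\Ext^\ast_{\msf{u}}(k,k)$ by ``matching a known presentation of cohomology against the projectivization of the parametrizing algebra'' only works where the cohomology is actually known: for $u_q(B)$ this means $\ord(q)$ larger than the Coxeter number, and for $\mcl{D}(B_{(1)})$ it means large characteristic. The theorem is claimed at \emph{arbitrary} odd order $q$ and in essentially arbitrary characteristic, where injectivity of the map $\kappa:\msf{Y}\to\mbb{P}(m_Z/m_Z^2)$ is not a priori known. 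The paper's key move, which your proposal is missing, is to run the logic backwards: first classify thick ideals by hypersurface support $(\kappa(\msf{Y}),\supp^{hyp}_{\mbb{P}})$ alone (Corollary \ref{cor:examples} plus Theorems \ref{thm:classify} and \ref{thm:chevalley_balmer}, via Propositions \ref{prop:borel_ideals} and \ref{prop:db_ideals}), and then use sub-multiplicativity of cohomological support together with that classification to \emph{deduce} that $\kappa$ is topologically a closed embedding (Lemma \ref{lem:class_kap}, Corollaries \ref{cor:948} and \ref{cor:949}); only then do hypersurface and cohomological support get identified. Without this step your argument proves a strictly weaker statement.

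Second, you take central generation of thick ideals as an input hypothesis and flag it as the hard point for (F2). That is not how the classification is organized here, and your diagnosis of the families is also off: $\rep(\msf{a}_q)$ and $\rep(u_q(B))$ are not braided in general, so centrality is not ``trivial for (F1),'' and importing central classes from $u_q(\mfk{g})$ runs into the same small-$q$ ignorance of cohomology as above. What the classification actually needs is the weaker property that ideals are generated by objects which \emph{centralize the simples}, and this is automatic for any Chevalley category by Lemma \ref{lem:1071} (the averaging object $\oplus_i\,\lambda_i\ot V\ot\lambda_i^\ast$); no hands-on construction of central generators is required. Central generation in the strong (Drinfeld center) sense is then a \emph{consequence} of the classification, proved afterwards via the closed embedding of $\Spec\Ext^\ast_{\msf{u}}(k,k)$ into the spectrum of the cohomology of $\mcl{D}(\msf{u})$ (Lemma \ref{lem:1018} and Theorem \ref{thm:center_genI}), using a cocycle-deformation argument for the double of $\msf{a}_q$ and of $u_q(B)$ rather than restriction from the ambient small quantum group. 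Reordering your argument along these lines removes the step you correctly identified as requiring ``genuine input.''
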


The case (\hyperref[it:G]{G}) of functions on a finite group scheme $\mcl{G}$ is unique among the examples considered.  However, the behaviors of support for $\Coh(\mcl{G})=\rep(\O(\mcl{G}))$ may be more representative of phenomena for supports of finite tensor categories in general, when compared with (\hyperref[it:1]{F1})--(\hyperref[it:3]{F3}).  By $\Coh(\mcl{G})$ here we mean the generally non-symmetric tensor category of sheaves on $\mcl{G}$ with tensor structure induced by the group structure on $\mcl{G}$, or rather induced by the Hopf structure on $\O(\mcl{G})$.  We provide the following classification result.

\begin{theoremA}[\ref{thm:G_classification}]
Consider $\msf{u}=\O(\mcl{G})$, for $\mcl{G}$ a finite group scheme over a perfect field, and let $\pi=\mcl{G}_{\rm red}$ be the reduced subgroup in $\mcl{G}$.  Thick ideals in $\stab(\msf{u})$ are classified by a support theory which takes values in the quotient $\left(\Proj\Ext^*_{\msf{u}}(k, k)\right)/\pi$ of the spectrum of cohomology by the adjoint action of $\pi$.  We furthermore have a homeomorphism
\[
\left(\Proj\Ext^*_{\msf{u}}(k, k)\right)/\pi\overset{\cong}\longrightarrow \cSpec(\stab(\msf{u})).
\]
\end{theoremA}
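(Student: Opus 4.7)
The strategy is to exploit the semidirect-product structure of $\mcl{G}$ to reduce to the infinitesimal case and then account for the resulting $\pi$-action on the support data.  Since $k$ is perfect, the connected-\'etale sequence $1\to \mcl{G}^\circ \to \mcl{G} \to \pi \to 1$ splits, giving a factorization $\mcl{G} = \mcl{G}^\circ \rtimes \pi$.  At the Hopf algebra level $\O(\mcl{G}^\circ)$ embeds into $\O(\mcl{G})$ as a normal sub-Hopf algebra with \'etale, and in particular semisimple, quotient $\O(\pi)$.  Restriction along this inclusion yields a faithful exact tensor functor $\stab(\O(\mcl{G}))\to \stab(\O(\mcl{G}^\circ))$, and the target carries a natural $\pi$-action by conjugation.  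Using that $\O(\pi)$ is separable, one sets up a Galois-type descent identifying thick ideals in $\stab(\O(\mcl{G}))$ with $\pi$-stable thick ideals in $\stab(\O(\mcl{G}^\circ))$.

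Next I would handle the infinitesimal case.  The Hopf algebra $\O(\mcl{G}^\circ)$ is integrable: one realizes $\mcl{G}^\circ$ as a regularly embedded closed subgroup scheme of a smooth affine algebraic group $\mcl{H}$ (for example a Frobenius kernel of some $\mrm{GL}_n$) and integrates via the surjection $\O(\mcl{H})\to \O(\mcl{G}^\circ)$, with parametrizing subalgebra given by the defining ideal, which is central coideal by construction.  The Chevalley property (Definition~\ref{eq:337}) holds trivially, since $\O(\mcl{G}^\circ)$ is local with unique simple module the trivial one.  Applying the tensor product property of Theorem~\ref{thm:tpp} together with the abstract classification machinery of the first half of the paper, one deduces that hypersurface support $(\Proj\Ext^*_{\O(\mcl{G}^\circ)}(k,k),\supp^{hyp}_{\mbb{P}})$ classifies thick ideals in $\stab(\O(\mcl{G}^\circ))$.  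Combining with the previous step, thick ideals in $\stab(\O(\mcl{G}))$ correspond to $\pi$-stable closed subsets of $\Proj\Ext^*_{\O(\mcl{G}^\circ)}(k,k)$, equivalently to closed subsets of the quotient $(\Proj\Ext^*_{\O(\mcl{G}^\circ)}(k,k))/\pi$, which one identifies with $(\Proj\Ext^*_{\O(\mcl{G})}(k,k))/\pi$ by a change-of-rings analysis for the Hopf extension $\O(\mcl{G}^\circ)\subset \O(\mcl{G})$ at the reduced level.  The homeomorphism with $\cSpec(\stab(\O(\mcl{G})))$ then follows from the abstract framework, exactly as in the proofs for families (\hyperref[it:1]{F1})--(\hyperref[it:3]{F3}).

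The main obstacle is the honestly non-braided nature of $\Coh(\mcl{G})$.  For (\hyperref[it:1]{F1})--(\hyperref[it:3]{F3}) the tensor categories are quasitriangular, so conjugation produces a tensor auto-equivalence canonically isomorphic to the identity through the braiding; for $\Coh(\mcl{G})$ with non-abelian $\mcl{G}$ this fails badly, and the $\pi$-action on $\stab(\O(\mcl{G}^\circ))$ is genuinely non-trivial.  This is precisely what forces the quotient by $\pi$ to appear in the target of the classifying support theory, and it is the point at which the central generation techniques developed in the first half of the paper---rather than a direct semisimple averaging argument, which fails in bad characteristic---become essential.
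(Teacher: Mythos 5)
The target you aim for (thick ideals $\leftrightarrow$ $\pi$-stable specialization closed subsets $\leftrightarrow$ subsets of the quotient) is the right one, but the mechanism you propose for getting there has a genuine gap. There is no ``faithful exact tensor functor'' $\stab(\Coh(\mcl{G}))\to\stab(\Coh(\mcl{G}_o))$ given by restriction: via the splitting $\mcl{G}\cong\mcl{G}_o\rtimes\pi$ the algebra $\mathscr{O}(\mcl{G}_o)$ sits inside $\mathscr{O}(\mcl{G})$ only as a coideal subalgebra (the scheme-theoretic projection $\mcl{G}_o\rtimes\pi\to\mcl{G}_o$ is not a homomorphism of group schemes unless $\pi$ acts trivially), so restriction along that inclusion is not monoidal; and restricting sheaves to the open-and-closed component $\mcl{G}_o$ kills everything supported off the identity component, so it is neither faithful nor monoidal.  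Separability of $\mathscr{O}(\pi)$ is not the operative point, and no Galois-type descent of ideals is set up this way.  What the paper actually does is work inside $\Coh(\mcl{G})$ directly: it defines hypersurface support componentwise via the decomposition $M\cong\oplus_{\lambda\in\pi}M_\lambda\ot\lambda^{-1}$ (formula \eqref{eq:suppG}), pushes it forward along $p:\mbb{P}(\mcl{L})\to\mbb{P}(\mcl{L}/\pi)$, and proves by hand that the resulting theory is faithful, exhaustive, and multiplicative against objects which centralize the simples, i.e.\ $\pi$-equivariant sheaves (Theorems \ref{thm:detectG} and \ref{thm:tppG}, Lemmas \ref{lem:1495}--\ref{lem:1703}); then Theorems \ref{thm:classify} and \ref{thm:chevalley_balmer} apply because $\Coh(\mcl{G})$ is Chevalley.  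This also affects your last step: the homeomorphism with $\cSpec(\Coh(\mcl{G}))$ does not follow from a bijection of ideals alone --- Lemma \ref{lem:776} and Theorem \ref{thm:balmer} require an actual multiplicative support theory on $\stab(\Coh(\mcl{G}))$ valued in the quotient, together with a faithful tensor extension to $\Stab$, and constructing and verifying that theory is precisely the technical content your outline skips.

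Several secondary points also need repair.  A Frobenius kernel of $\mrm{GL}_n$ is not smooth; one embeds $\mcl{G}_o$ into $\mrm{GL}_n$ itself, and the parametrizing subalgebra is $\mathscr{O}(\mcl{H}/\mcl{G}_o)$, a coideal subalgebra, not ``the defining ideal.''  The categories (F1) and (F2) are not quasitriangular (the paper notes that quantum Borels admit no braidings), so your explanation of why no quotient appears for (F1)--(F3) rests on a false premise: the actual reason is the geometric Chevalley property, Lemma \ref{lem:w/e}, which makes every object centralize the simples.  Relatedly, full central generation of ideals in $\stab(\Coh(\mcl{G}))$ is not known in general (Question \ref{q:quest_G}); the classification uses only the weaker property of generation by objects centralizing the simples (Lemma \ref{lem:1071}), so ``central generation techniques'' are not what carries the argument here.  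Finally, you never address perfect but non-algebraically-closed $k$, where the simples need not be invertible and the componentwise definition must be made after base change to a finite Galois splitting field for $\pi$ and then descended (Section \ref{sect:k}); this step is part of the statement you are proving and cannot be omitted.
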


As mentioned above, our analyses of the spectra $\cSpec(\stab(\msf{u}))$ focus on notions of {central generation} for ideals in the stable category.  When all ideals in $\stab(\msf{u})$ are generated by (sufficiently) central objects, one can essentially employ ``commutative" arguments in order to understand the universal support theory $(\cSpec(\stab(\msf{u})),\supp^{uni})$ for $\stab(\msf{u})$.  One can see for example Question \ref{q:quest_gen}, Proposition \ref{prop:properties}, Theorem \ref{thm:balmer}, Theorem \ref{thm:center_genI}, and the materials of Section \ref{sect:cen_G}.  Our ability to analyze non-braided categories via central objects also explains, at least to a certain degree, why geometries for non-braided tensor categories behave more like geometries associated to commutative, rather than noncommutative, rings.  One can compare our approach with the ``noncommutative tensor triangulated geometry" proposed in \cite{nakanovashawyakimov,nakanovashawyakimovII}.
\par

A nontrivial amount of space in the exposition is dedicated to discussions of thick ideals, and prime ideals, in stable categories of arbitrary non-braided tensor categories (see in particular Sections \ref{sect:ideals} and \ref{sect:spec}.)  We have found these explorations to be quite interesting in their own right, but one can, and arguably should, connect the aspirations of the present text with those of more familiar/popular studies of support theory for braided tensor categories.  So, we take a moment to discuss this point.

\subsection{Why non-braided categories?}\label{sect:non-braided}

Depending on an individual's motivations, they may ``only" be interested in braided tensor categories.  We sympathize with this perspective.  However, let us make a point regarding supports for tensor categories outside of the classical (symmetric) setting.  Below by \emph{support} we generally mean \emph{cohomological support} (see Section \ref{sect:supports}).
\par

In finite characteristic, one may approach support theory for representations of a given finite group scheme $\mcl{G}$ by studying the supports of objects over its collection of unipotent subgroups $\mcl{U}_\mu\subset \mcl{G}$.  Rather, one studies the support of an object $V$ in $\rep(\mcl{G})$ by considering its restrictions along the various symmetric tensor functors $\res_\mu:\rep(\mcl{G})\to \rep(\mcl{U}_\mu)$, then essentially glues the supports of $V$ over the $\mcl{U}_\mu$ together to recover information about the support of $V$ over the global object $\mcl{G}$ \cite{suslinfriedlanderbendel97,friedlanderpevtsova07}.  The point here is that, in general, support over a given unipotent group $\mcl{U}_\mu$ is, in some ways, more tractable than that of $\mcl{G}$.
\par

In a braided but non-symmetric setting, it can be useful to take a similar approach.  Given a braided tensor category $\msc{Z}$ over an arbitrary field, we would like to study the support of an object $V$ in $\msc{Z}$ by studying the supports of its images along a distinguished collection of surjective tensor functors $\{F_\mu\}_{\mu\in \mrm{M}}$.  These functors will be maps to some more manageable categories $\msc{Z}_\mu$,
\[
F_\mu:\msc{Z}\to \msc{Z}_\mu.
\]
(So we stratify, in a sense, $\msc{Z}$ by the functors $F_\mu$, cf.\ \cite[Corollary 9.16]{mathew16}.)  In characteristic $0$--and probably in most non-symmetric cases in finite characteristic--it is \emph{not} reasonable to require the $\msc{Z}_\mu$ here to be braided.  For example, for representations of the small quantum group $\msc{Z}=\rep(u_q(\mbb{G}))$, associated to an almost-simple algebraic group $\mbb{G}$, it is natural to take the $\msc{Z}_\mu$ to be the representation categories of the varied quantum Borels $u_q(B_\mu)$ for $\mbb{G}$ at $q$.  The categories $\rep(u_q(B_\mu))$ are known to admit no braidings \cite{bonteanikshych}.  This is explicitly the perspective taken in the upcoming work \cite{negronpevtsovaII}.
\par

It therefore becomes natural, or even necessary, to provide refined analyses of support for non-braided tensor categories.  This is true even when one is primarily interested in braided categories.

\subsection{Structure of the paper} 

Sections \ref{sect:ftc}--\ref{sect:recoll} are introductory.  In Sections~\ref{sect:ideals} and \ref{sect:abstract} we provide a general discussion of thick ideals, localizing subcategories and support theories for (stabilized) finite tensor categories $\msc{C}$.  In Section~\ref{sect:ideals} we introduce the notion of a centrally generated thick ideal, which turns out to be essential for the rest of the paper. In the same section we list axiomatic properties for a {\it multiplicative} support theory, and a {\it tensor extension} of such a theory to the big stable category (Definitions~\ref{def:reasonable} and \ref{def:text}).  We explain how such a support theory classifies thick ideals in the stable category $\stab(\msc C)$. In Section~\ref{sect:spec}, following Balmer \cite{balmer05}, we discuss the universal support theory for $\stab(\msc C)$ and the associated spectrum of prime ideals. We show in Theorem~\ref{thm:balmer} that, under certain central generation assumptions, a multiplicative support theory for $\stab(\msc C)$ which admits a tensor extension to $\Stab(\msc{C})$ can be used to calculate the spectrum $\cSpec(\stab(\msc C))$, up to homeomorphism.
\par 

We turn to a (re)consideration of hypersurface support in Section~\ref{sect:tensor}. Our goal is to show that hypersurface support provides a tensor extension of cohomological support for the classes of integrable Hopf algebras (\hyperref[it:1]{F1})--(\hyperref[it:3]{F3}) discussed above.  In Section~\ref{sect:tensor} we show that the appropriate extended tensor product property for hypersurface support follows from a ``Thick subcategory lemma" for hypersurface algebras (whose prototype, also known as the Hopkins lemma, first appeared in \cite{hopkins87}).  In Section~\ref{sect:ideals_q} we apply the thick subcategory lemma to classify thick tensor ideals and compute the spectrum for quantum complete intersections, small quantum Borels in type $A$, and the Drinfeld doubles $\mcl{D}(B_{(1)})$.  
\par

Finally, in Sections \ref{sect:hyper_G} and \ref{sect:ideals_G} we analyze the case (\hyperref[it:G]{G}) of the stable category $\stab(\Coh(\mcl{G}))$ of coherent sheaves on a finite group scheme $\mcl{G}$.  In the last section, Section~\ref{sect:onesided}, we briefly discuss one-sided versus two-sided thick ideals.

\subsection{Acknowledgements}

Thanks to Lucho Avramov, Eric Friedlander, Henning Krause, Dan Nakano, and Mark Walker for helpful commentary.  Special thanks to Srikanth Iyengar for offering myriad insights throughout the production of this work.  The first named author is supported by NSF grant DMS-2001608.  The second named author is supported by NSF grants DMS-1901854 and the Brian and Tiffinie Pang faculty fellowship.  This material is based upon work supported by the National Science Foundation under Grant No.\ DMS-1440140, while the first author was in residence at the Mathematical Sciences Research Institute in Berkeley, California, during the Spring 2020 semester, and the second author was in digital residence.

\tableofcontents

\section{Finite tensor categories etc.}
\label{sect:ftc}

Throughout this work $k$ is a field of arbitrary characteristic.  In this section we recall some basic information about finite tensor categories.  Recall, say from \cite{bakalovkirillov01} or \cite{egno15}, that a tensor category (over $k$) is a $k$-linear, abelian rigid monoidal category which has a simple unit object $\1$, finite-dimensional Hom sets, and all objects of finite length. Following \cite{etingofostrik04}, a tensor category $\msc{C}$ is called finite if it has finitely many simples and enough projectives.  We call $\msc{C}$ a fusion category if it is finite and semisimple.  All tensor functors are exact, by definition, and tensor subcategories are full, by definition.
\par

Our main examples of (finite) tensor categories are representation categories $\msc{C}=\rep(\msf{u})$ of finite-dimensional Hopf algebras, with monoidal structure induced by the coproduct on $\msf{u}$.  There are, however, many examples of finite tensor categories which are not representation categories of Hopf algebras (see for example \cite{tsuchiyawood13,gelakisebbag,bensonetingof}).

\subsection{Centralizers}
\label{sect:centralizers}

Consider $\msc{C}$ a finite tensor category and $\msc{D}\subset \msc{C}$ a tensor subcategory. To such a pair $(\msc{D},\msc{C})$ we can associate the \emph{Drinfeld centralizer} of $\msc{D}$ in $\msc{C}$ \cite{muger03,shimizu19II}.  This is the (finite) tensor category consisting of pairs $(V,\gamma_V)$, where $V$ is an object in $\msc{C}$ and $\gamma_V:V\ot -\to -\ot V$ is a natural isomorphism between the functors $V\ot-,-\ot V:\msc{D}\to \msc{C}$ which satisfies the constraint
\[
(id_X\ot \gamma_{V,Y})(\gamma_{V,X}\ot id_Y)=\gamma_{V,X\ot Y},
\]
at all $X$ and $Y$ in $\msc{D}$.  We call $\gamma_V$ a $\msc{D}$-centralizing structure on $V$.  We denote the the category of $\msc{D}$-centralizing objects in $\msc{C}$ by
\[
Z^\msc{D}(\msc{C}):=\left\{\begin{array}{c}\text{the category of pairs $(V,\gamma_V)$ of}\\
\text{an object $V$ in $\msc{C}$ and a choice of}\\
\text{$\msc{D}$-centralizing structure $\gamma_V$ on $V$}
\end{array}\right\}.
\]
Morphisms $f:(V,\gamma_V)\to (W,\gamma_W)$ in $Z^{\msc{D}}(\msc{C})$ are those maps $f:V\to W$ in $\msc{C}$ which respect the centralizing structures, in the sense that $(-\ot f)\gamma_V=\gamma_W(f\ot -)$.
\par

In the extreme case $\msc{D}=\msc{C}$, we obtain the Drinfeld center $Z(\msc{C})$ of $\msc{C}$.  This is the category of pairs $(V,\gamma_V)$ of objects in $\msc{C}$ with global central structures.  We have the forgetful functors $Z(\msc{C})\to \msc{C}$ and $Z^\msc{D}(\msc{C})\to \msc{C}$.
\par

In this text we are interested in two specific cases; the case where $\msc{D}=\msc{C}$ and the case where $\msc{D}$ is the tensor subcategory generated by the semisimple objects in $\msc{C}$.  We say, informally, that an object $V$ in $\msc{C}$ ``is central" if $V$ admits a lift to the Drinfeld center $Z(\msc{C})$.  Similarly, in the case where $\msc{D}$ is generated by the simples, we say an object $V$ ``centralizes the simples" if $V$ admits a lift to $Z^{\msc{D}}(\msc{C})$.

\begin{remark}
We've suppressed associators in the above formulas.  Also, by the tensor subcategory generated by a class of objects $\{X_i\}_i$ we mean the smallest full tensor subcategory which contains the $X_i$ and is closed under taking subquotients.
\end{remark}

\begin{remark}
Practically speaking, we consider the relative centralizer $Z^{\msc{D}}(\msc{C})$ only in the case in which the tensor subcategory $\msc{D}$ generated by the semisimple objects is just the subcategory of semisimple objects itself.  This is a kind of solvability condition on the category $\msc{C}$.  One can compare, for example, with the case of representations of small quantum $\mfk{sl}_2$ at a root of unity, in which case the subcategory generated by the $2$-dimensional simple is all of $\rep(u_q(\mfk{sl}_2))$.  (See also Section \ref{sect:geom_chev}.)
\end{remark}

\subsection{Stable categories for finite tensor categories}
\label{sect:stabC}

Recall that any finite tensor category $\msc{C}$ is Frobenius \cite[Proposition 2.3]{etingofostrik04}.  So we can define the associated stable category $\stab(\msc{C})=\msc{C}/\operatorname{proj}(\msc{C})$.  This is, more precisely, the additive category with the same objects as $\msc{C}$, and morphisms given as the quotient of morphisms in $\msc{C}$ by all those maps which factor through a projective.  The category $\stab(\msc{C})$ is naturally tensor triangulated \cite{happel90,rickard89}, i.e. triangulated with a compatible rigid monoidal structure.  The monoidal structure on $\stab(\msc{C})$ is induced directly by that of $\msc{C}$.
\par

In the stable setting we still speak of central objects, and objects which centralize the simples.  We employ this language in the same manner as outlined in the previous subsection.  For example, for $\msc{D}\subset \msc{C}$ the full tensor subcategory generated by the simples, the forgetful functor $F:Z^\msc{D}(\msc{C})\to \msc{C}$ is surjective (aka dominant), and therefore sends projectives in $Z^\msc{D}(\msc{C})$ to projectives in $\msc{C}$ \cite{etingofostrik04}.  Hence we can stabilize this map
\[
\stab(F):\stab(Z^\msc{D}(\msc{C}))\to \stab(\msc{C}),
\]
and an object in $\stab(\msc{C})$ is said to centralize the simples if it admits a lift along $\stab(F)$.

We are also interested in the ``big" stable category $\Stab(\msc{C})$, which is defined as the stabilization of the Ind-category $\Stab(\msc{C})=\operatorname{Ind}(\msc{C})/\Proj(\operatorname{Ind}(\msc{C}))$.  This category is monoidal and triangulated, but not rigid, and it is compactly generated with compact objects $\Stab(\msc{C})^c=\stab(\msc{C})$.

\begin{remark}
If one expresses $\msc{C}$ as the representation category $\msc{C}\cong \rep(A)$ of a Hopf algebroid $A$--which can always be done \cite{szlachanyi00}--then one recovers the expected formula $\Stab(\msc{C})\cong\Rep(A)/\Proj(A)$.
\end{remark}

\section{Recollections from \cite{negronpevtsova,negronpevtsova2}}
\label{sect:recoll}

We recall some essential information which was covered in \cite{negronpevtsova}.  Here we employ basic notions from deformation theory.  By a deformation of an algebra $R$ we mean a choice of an augmented commutative algebra $Z$, a flat $Z$-algebra $Q$, and a map $Q\to R$ which reduces to an isomorphism $k\ot_ZQ\cong R$ at the augmentation $1:Z\to k$.  In this case we call $Z$ the parametrizing algebra, or the parametrization algebra, of the deformation.

\subsection{Integrable Hopf algebras}

\begin{definition}[\cite{negronpevtsova}]\label{def:integrable}
A finite-dimensional Hopf algebra $\msf{u}$ is said to be \emph{smoothly integrable}, or just {\it integrable}, if $\msf{u}$ admits a deformation $U\to \msf{u}$ parametrized by a smooth central subalgebra $\mcl{Z}\subset U$ such that
\begin{enumerate}
\item[(a)] $U$ is a Noetherian Hopf algebra of finite global dimension, and $U\to \msf{u}$ is a Hopf map.
\item[(b)] $\mcl{Z}$ is a coideal subalgebra in $U$.
\end{enumerate}
Under these conditions, $U$ is called a (smooth) integration of $\msf{u}$ parametrized by $\mcl{Z}$.  An integration $U\to \msf{u}$ is called conormal if $\mcl{Z}$ is a Hopf subalgebra in $U$.
\end{definition}

We suppose specifically that $\mcl{Z}$ is a \emph{right} coideal subalgebra in $U$, so that the comultiplication on $U$ restricts to a coaction $\Delta:\mcl{Z}\to \mcl{Z}\ot U$.  This provides, for any $f\in m_Z$, an action of $\rep(\msf{u})$ on the \emph{left} of the associated hypersurface category $U/(f)\text{-mod}$.  Also, smoothness of $\mcl{Z}$ implies, in particular, that $\mcl{Z}$ is of finite type over $k$.

\begin{remark}
We generally reserve the notation $\rep(B)$ for the \emph{tensor} category of representations for a Hopf algebra $B$, while $B$-mod denotes the abelian category of modules for a generic $k$-algebra.
\end{remark}

Throughout we employ the notations
\[
\Lambda=\msf{u}/\operatorname{Jac}(\msf{u})\ \text{ and }\ m_{\mcl{Z}}\subset \mcl{Z}
\]
for the sum of the simple representations for $\msf{u}$, with multiplicity, and the distinguished maximal ideal in $\mcl{Z}$, respectively.  Rather, $m_{\mcl{Z}}\subset \mcl{Z}$ is the kernel of the augmentation $1:\mcl{Z}\to k$ at which we have $k\ot_\mcl{Z}U\cong \msf{u}$.

Given an integration $U\to \msf{u}$ of some finite-dimensional Hopf algebra $\msf{u}$, we can complete at the distinguished point $1\in \Spec(\mcl{Z})$ to produce a formal counterpart
\[
\msf{U}\to \msf{u},\ \ \text{where $\msf{U}$ is the completion}\ \msf{U}:=Z\ot_{\mcl{Z}}U,\ Z=\hat{\mcl{Z}}_1=\varprojlim_n\mcl{Z}/m_{\mcl{Z}}^n.
\]
The completed algebra $\msf{U}$ is a Hopf algebra in the category of linear topological vector spaces, and $Z\subset \msf{U}$ is a coideal subalgebra in $\msf{U}$.  Furthermore, $\msf{U}$ is Noetherian and of finite global dimension \cite[Lemma 2.10]{negronpevtsova}.
\par

One can find in \cite[\S 2.1]{negronpevtsova} a gaggle of examples of integrable Hopf algebras.  In the present work, as in \cite{negronpevtsova2}, we always assume that a formal integration $\msf{U}\to \msf{u}$ has a smooth analog $U\to \msf{u}$.  So, although we work almost exclusively with the formal pair $(Z,\msf{U})$, we assume the existence of a smooth predecessor $(\mcl{Z},U)$.

\subsection{Hypersurface support}
\label{sect:hyp}

Consider integrable $\msf{u}$, with fixed (formal) integration $\msf{U}\to \msf{u}$, and parametrizing algebra $Z$.  We have the maximal ideal $m_Z\subset Z$, and for an arbitrary point in the projective space $\mbb{P}(m_Z/m_Z^2)$,
\[
c:\Spec(K)\to \mbb{P}(m_Z/m_Z^2),
\]
we consider an associated hypersurface algebra $\msf{U}_c=\msf{U}_K/(f)$, where $f\in m_{Z_K}$ is any element such that the class $\bar{f}\in (m_Z/m_Z^2)_K$ is a representative for $c$.  Note that for such $\msf{U}_c$ the base changed integration $\msf{U}_K\to \msf{u}_K$ descends to a map $\msf{U}_c\to \msf{u}_K$ which realizes the hypersurface algebra as a deformation of $\msf{u}_K$.

\begin{remark}
By $Z_K$ and $\msf{U}_K$ we mean the topological base change $Z_K=\varprojlim_n (Z/m_Z^n)\ot K$ and $\msf{U}_K=\varprojlim_n (\msf{U}/m_Z^n\msf{U})\ot K$, respectively.
\end{remark}

For any $\msf{u}$-module $M$, either finite-dimensional or infinite-dimensional, we say that $M$ is supported at such a point $c$ if the base change $M_K$ is of infinite projective dimension over $\msf{U}_c$, and we define the hypersurface support of $M$ as
\[
\supp^{hyp}_\mbb{P}(M):=\left\{
\begin{array}{c}
\text{the image of all points }c:\Spec(K)\to \mbb{P}(m_Z/m_Z^2)\\
\text{at which  $\operatorname{projdim}_{\msf{U}_c}(M_K) = \infty$} 
\end{array}\right\}.
\]
We show in \cite{negronpevtsova2} that there is no ambiguity in this definition.  For example, the projective dimension of $M_K$ over $\msf{U}_c$ is independent of the choice of representative $f\in m_Z$.  We also have the following detection theorem. 

\begin{theorem}[{\cite[Theorem 6.1]{negronpevtsova2}}]\label{thm:detec}
For $M$ any $\msf{u}$-module, $\supp^{hyp}_\mbb{P}(M)=\emptyset$ if and only if $M$ is projective over $\msf{u}$.  Equivalently, $\supp^{hyp}_\mbb{P}(M)=\emptyset$ if and only if $M$ vanishes in the stable category $\Stab(\msf{u})$.
\end{theorem}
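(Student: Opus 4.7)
The plan is to prove both implications by linking hypersurface support to cohomological support, exploiting the fact that $\msf{U}_c \to \msf{u}_K$ is itself a one-parameter deformation with smooth (regular) parametrizing algebra $Z_c := Z_K/(f)$.

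For the easy direction, suppose $M$ is projective over $\msf{u}$. Then $M_K$ is projective over $\msf{u}_K$, so it suffices to show $\msf{u}_K$ itself has finite projective dimension over $\msf{U}_c$ for every $c$. Since $\msf{U}_c$ is flat over $Z_c$ and $\msf{u}_K = \msf{U}_c \otimes_{Z_c} k_K$, tensoring the (finite) Koszul resolution of $k_K$ over the regular local ring $Z_c$ with $\msf{U}_c$ produces a finite free resolution of $\msf{u}_K$ over $\msf{U}_c$. Composing this with a projective $\msf{u}_K$-resolution of $M_K$ of length zero gives $\operatorname{projdim}_{\msf{U}_c}(M_K) < \infty$, so $\supp^{hyp}_\mbb{P}(M) = \emptyset$.

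The hard direction requires reinterpreting finite projective dimension over the hypersurface $\msf{U}_c$ as a cohomological vanishing condition over $\msf{u}_K$. The key step is to associate, to each direction $c \in \mbb{P}(m_Z/m_Z^2)_K$ with representative $f$, a cohomology class $\chi_f \in H^2(\msf{u}_K, k_K)$ arising as the connecting class of the short exact sequence $0 \to \msf{U}_K \xrightarrow{\cdot f} \msf{U}_K \to \msf{U}_c \to 0$, i.e., the Hochschild obstruction of the one-parameter deformation $\msf{U}_c \to \msf{u}_K$. Assembling these over $c$ gives a map $\Sym((m_Z/m_Z^2)[-2]) \to H^{\mrm{ev}}(\msf{u}, k)$, whose image defines a cohomological support. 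I would then establish, in the style of Eisenbud/Avramov-Buchweitz matrix factorizations, that $\operatorname{projdim}_{\msf{U}_c}(M_K)$ is finite if and only if $c$ is not in the cohomological support of $M_K$, i.e., if and only if the cohomology operator $\chi_f$ acts nilpotently on some appropriate finitely generated piece of $\Ext^*_{\msf{u}_K}(M_K, M_K)$, or equivalently becomes invertible on Tate cohomology.

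From here, vanishing of $\supp^{hyp}_\mbb{P}(M)$ translates to vanishing of the corresponding projective cohomological support of $M$. Using the Noetherianity of $H^*(\msf{u}, k)$ (which is built into the integrability hypothesis via the smooth deformation $U \to \msf{u}$, following Friedlander-Suslin-type finite generation arguments) together with a BIK-style colocalizing extension from finite-dimensional to arbitrary $M$, the empty support forces $\hat{\Ext}^*_{\msf{u}}(k, M) = 0$, which in the self-injective Frobenius algebra $\msf{u}$ is equivalent to projectivity. The main obstacle is the technical heart of Step 3: adapting the Eisenbud matrix factorization correspondence to the noncommutative one-parameter deformation $\msf{U}_c \to \msf{u}_K$, where standard commutative arguments must be replaced by a relative bar or Koszul-Tate construction that is compatible with the coideal (rather than central) structure of $Z \subset \msf{U}$, and then tracking this compatibility uniformly as $c$ varies across $\mbb{P}(m_Z/m_Z^2)$.
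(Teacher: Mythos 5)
First, a point of comparison: this paper does not prove Theorem \ref{thm:detec} at all --- it is imported verbatim from the companion paper \cite[Theorem 6.1]{negronpevtsova2}, so the relevant benchmark is the argument there, not anything in the present text. Your easy direction is fine: since $Z_K$ is central in $\msf{U}_K$ and $\msf{U}_K$ is flat over it, $\msf{u}_K$ has a finite free resolution over $\msf{U}_c$ obtained from the Koszul resolution of the residue field over the regular local ring $Z_K/(f)$, and summands of free $\msf{u}_K$-modules therefore have finite projective dimension over every hypersurface.

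The hard direction, however, has a genuine gap, and it sits exactly where the cited theorem earns its keep: the statement is for \emph{arbitrary} (not finite-dimensional, not finitely generated) $M$. Your plan routes everything through an Avramov--Buchweitz/Eisenbud-type equivalence ``$\operatorname{projdim}_{\msf{U}_c}(M_K)<\infty$ iff $\chi_f$ acts nilpotently on a finitely generated piece of $\Ext^\ast_{\msf{u}_K}(M_K,M_K)$,'' and then through ``Noetherianity of $H^\ast(\msf{u},k)$ plus a BIK-style colocalizing extension.'' Both steps are only available for finite-dimensional modules: for big $M$ there is no finitely generated piece of $\Ext^\ast_{\msf{u}_K}(M_K,M_K)$ over cohomology to feed into a nilpotence argument, and the Benson--Iyengar--Krause machinery cannot simply be invoked here --- it presupposes a central, tensor-compatible ring action and stratification-type hypotheses that are not established in this noncommutative, non-braided setting (indeed, the tensor compatibility of support on $\Stab(\msf{u})$ is proved \emph{later in this paper using the detection theorem as input}, so your route risks assuming downstream results). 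The actual proof in \cite{negronpevtsova2} avoids cohomological support for big modules entirely and works directly with projective dimension over the hypersurface algebras, via base-change and generic-point arguments over $\mbb{P}(m_Z/m_Z^2)$; that is why the coideal/noncommutativity issue you flag as ``the main obstacle'' is not the main obstacle --- infinite-dimensionality is. Finally, your last step concludes projectivity from $\hat{\Ext}^\ast_{\msf{u}}(k,M)=0$; this is insufficient in general, since the unit need not generate $\Stab(\msf{u})$ (e.g.\ whenever $\msf{u}$ has a block, or simple, not generated by $k$). One must test against the full sum of simples $\Lambda$, as the present paper itself does in the proof of Proposition \ref{prop:classify}; that part is fixable, but the reduction of hypersurface support to a cohomological support for arbitrary $M$ is not, at least not by the argument you sketch.
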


By deformation theory \cite{gerstenhaber64} \cite[Theorem 1.1.5]{bezrukavnikovginzburg07}, the integration $\msf{U}\to \msf{u}$ furthermore provides a graded algebra map
\[
A_Z:=\Sym(\Sigma^{-2}(m_Z/m_Z^2)^\ast)\to \Ext^\ast_\msf{u}(k,k)
\]
from the homogeneous coordinate ring of $\mbb{P}(m_Z/m_Z^2)$, with generators in degree $2$.  This map can be shown to be finite \cite[Theorem 4.8]{negronpevtsova}.  Thus we have, dually, a closed map of schemes
\[
\kappa:\msf{Y}\to \mbb{P}(m_Z/m_Z^2)
\]
from the projective spectrum of cohomology $\msf{Y}:=\Proj\Ext^\ast_\msf{u}(k,k)_{\rm red}$ to this projective space.  In \cite{negronpevtsova} we prove the following.

\begin{proposition}[{\cite[Theorem 7.1]{negronpevtsova}}]\label{prop:538}
Consider $\msf{u}$ an integrable Hopf algebra, with fixed integration $\msf{U}\to \msf{u}$.  Take $V$ in $\rep(\msf{u})$ and let $\Ext^\ast_\msf{u}(V,V)^\sim$ denote the (coherent) sheaf on $\msf{Y}$ defined by the tensor action $V\ot-$ of $\Ext^\ast_{\msf{u}}(k,k)$ on $\Ext^\ast_\msf{u}(V,V)$.  We have
\[
\supp^{hyp}_\mbb{P}(V)=\kappa\big(\Supp_{\msf{Y}}\Ext^\ast_\msf{u}(V,V)^\sim\big).
\]
In particular, hypersurface support vanishes on the open complement $\mbb{P}-\kappa(\msf{Y})$.
\end{proposition}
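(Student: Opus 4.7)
The plan is to identify both sides with the projective support of $\Ext^\ast_\msf{u}(V,V)$ as a finitely generated graded module over the polynomial ring $A_Z=\Sym(\Sigma^{-2}(m_Z/m_Z^2)^\ast)$.

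Because $A_Z\to \Ext^\ast_\msf{u}(k,k)$ is a finite graded algebra map, $\Ext^\ast_\msf{u}(V,V)$ is a finitely generated graded $A_Z$-module. By the standard behaviour of coherent sheaf supports under finite morphisms of projective schemes,
\[
\kappa\big(\Supp_\msf{Y} \Ext^\ast_\msf{u}(V,V)^\sim\big) \;=\; \Supp_\mbb{P} \Ext^\ast_\msf{u}(V,V),
\]
the right side being the projective support of $\Ext^\ast_\msf{u}(V,V)$ over $A_Z$. Since both sides of the desired equality are closed in $\mbb{P}=\mbb{P}(m_Z/m_Z^2)$, it suffices to compare them at closed points. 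Fix such a point $c\in\mbb{P}$, with residue field $K$, representative $\bar f\in (m_Z/m_Z^2)_K$, and a lift $f\in m_{Z_K}$.

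The key observation is that $\msf{U}_c\to \msf{u}_K$ is itself a formal integration of $\msf{u}_K$, parametrized now by the smooth local algebra $\bar Z:=Z_K/(f)$ of dimension $n-1$. The deformation-theoretic construction that produced the map $A_Z\to \Ext^\ast_\msf{u}(k,k)$ yields, applied to this reduced integration, a finite classifying map $A_{\bar Z}\to \Ext^\ast_{\msf{u}_K}(k,k)$ factoring as $A_{\bar Z}\hookrightarrow (A_Z)_K\to \Ext^\ast_{\msf{u}_K}(k,k)$, where the inclusion realizes $A_{\bar Z}$ as the subring generated by the hyperplane $\mrm{ann}(\bar f)\subset (m_Z/m_Z^2)^\ast_K$. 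Choosing a complementary degree-two element $\xi$ with $\xi(\bar f)\neq 0$ gives $(A_Z)_K=A_{\bar Z}[\xi]$, and identifies the homogeneous prime $\mfk{p}_c$ cutting out $c$ as the extension of the augmentation ideal of $A_{\bar Z}$ to $(A_Z)_K$.

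The technical heart is a noncommutative analogue of the classical cohomology-operator characterization of hypersurface modules (compare Gulliksen, Eisenbud, and Avramov-Iyengar): $V_K$ has finite projective dimension over $\msf{U}_c$ if and only if $\Ext^\ast_{\msf{u}_K}(V_K,V_K)$ is finitely generated over the subring $A_{\bar Z}$ via the reduced classifying map. One direction uses the detection theorem (Theorem~\ref{thm:detec}) together with the change-of-rings spectral sequence for the presentation $\msf{u}_K=\msf{U}_c/m_{\bar Z}\msf{U}_c$; the other direction builds a bounded $\msf{U}_c$-projective resolution of $V_K$ out of a minimal $\msf{u}_K$-resolution using the $A_{\bar Z}$-finite generation. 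Combined with graded Nakayama and the splitting $(A_Z)_K=A_{\bar Z}[\xi]$, ``finitely generated over $A_{\bar Z}$'' is equivalent to $\mrm{ann}_{A_Z}\Ext^\ast_\msf{u}(V,V)\not\subset \mfk{p}_c$, i.e.\ to $c\notin \Supp_\mbb{P} \Ext^\ast_\msf{u}(V,V)$. The main obstacle is precisely this last equivalence, which relies on a careful interplay of the grading, the $\xi$-splitting, and graded Nakayama to reconcile the deformation-theoretic ``finitely generated over a subring'' criterion with the algebro-geometric ``localization at $\mfk{p}_c$ is nonzero'' criterion. The ``in particular'' clause is then immediate, since $\Supp_\msf{Y}\Ext^\ast_\msf{u}(V,V)^\sim\subset \msf{Y}$ by construction, so its $\kappa$-image lies in $\kappa(\msf{Y})$.
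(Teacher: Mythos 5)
First, a structural point: this paper does not actually prove Proposition \ref{prop:538}; it is imported from \cite[Theorem 7.1]{negronpevtsova}, so your proposal has to be measured against that source's argument.  Your skeleton is the same one: identify $\kappa\big(\Supp_{\msf{Y}}\Ext^\ast_\msf{u}(V,V)^\sim\big)$ with the support of $\Ext^\ast_\msf{u}(V,V)$ as a finite graded $A_Z$-module, then test membership point by point, using that $\msf{U}_c\to\msf{u}_K$ is again a flat deformation with parametrizing algebra $\bar{Z}=Z_K/(f)$ and that the induced classifying map $A_{\bar{Z}}\to\Ext^\ast_{\msf{u}_K}(K,K)$ is the restriction of $(A_Z)_K$ to the hyperplane $\mathrm{ann}(\bar{f})$.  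The closing equivalence you invoke---finite generation of $\Ext^\ast_{\msf{u}_K}(V_K,V_K)$ over $A_{\bar{Z}}$ if and only if $c\notin\Supp_{\mbb{P}}\Ext^\ast_\msf{u}(V,V)$---is a correct graded Nakayama argument, and the ``in particular'' clause is fine.

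The genuine gap is the step you yourself call the technical heart.  The equivalence ``$\operatorname{projdim}_{\msf{U}_c}(V_K)<\infty$ iff $\Ext^\ast_{\msf{u}_K}(V_K,V_K)$ is finitely generated over $A_{\bar{Z}}$'' is a noncommutative Gulliksen-type theorem, and it is essentially the entire content of \cite[Theorem 7.1]{negronpevtsova}; citing the commutative prototypes does not discharge it, since the Eisenbud-operator and DG/Koszul techniques do not transport formally to a deformation parametrized by a coideal subalgebra---constructing the cohomology operators and their compatibilities in this setting is exactly what \cite{negronpevtsova,negronpevtsova2} labor to do.  Moreover, your indicated arguments for the two directions do not work as stated: Theorem \ref{thm:detec} detects projectivity over $\msf{u}$ by quantifying over \emph{all} hypersurfaces and gives no information at the fixed point $c$, so it cannot drive either implication; and ``building a bounded $\msf{U}_c$-projective resolution out of a minimal $\msf{u}_K$-resolution using $A_{\bar{Z}}$-finite generation'' is precisely the hard construction, not a routine one.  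Two smaller issues: the reduction ``both sides are closed, so compare at closed points'' is circular, because closedness of $\supp^{hyp}_\mbb{P}(V)$ is a consequence of the proposition rather than an input (harmless, since your argument applies verbatim at an arbitrary point $c:\Spec(K)\to\mbb{P}$); and $\msf{U}_c\to\msf{u}_K$ is a flat deformation but not an ``integration'' in the sense of Definition \ref{def:integrable}, as $\msf{U}_c$ generally has infinite global dimension---only flatness is needed for the classifying map, so say that instead.
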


We provide a further analysis of hypersurface support for certain integrable Hopf algebras in Section \ref{sect:tensor} below.

\subsection{Geometrically Chevalley algebras \cite[\S 8.3, 8.4]{negronpevtsova}}
\label{sect:geom_chev}

In the following definition we let $\Rep(\Lambda)$ denote the monoidal category of infinite-dimensional representations for a finite-dimensional Hopf algebra $\Lambda$.  Recall that, for an integration $\mcl{Z}\subset U\to \msf{u}$ of an integrable Hopf algebra $\msf{u}$, we take $Z=\varprojlim_n \mcl{Z}/m_\mcl{Z}^n$.

\begin{definition}[\cite{negronpevtsova}]\label{eq:337}
Call an integrable Hopf algebra $\msf{u}$ geometrically Chevalley if the following hold:
\begin{enumerate}
\item[(a)] $\msf{u}$ is the bosonization $\msf{u}=\msf{u}^+\rtimes \Lambda$ of a local Hopf algebra $\msf{u}^+$ in a (semisimple!) braided fusion category $\rep(\Lambda)$.
\item[(b)] $\msf{u}^+$ admits a deformation $\mcl{Z}\subset U^+\to \msf{u}^+$ via algebras in $\Rep(\Lambda)$ such that 
\begin{enumerate}
\item[(b0)] $\msf{U}^+$ is a Hopf algebra in $\Rep(\Lambda)$, and $\msf{U}^+\to \msf{u}^+$ is a Hopf map.
\item[(b1)] $\mcl{Z}$ is a central Hopf subalgebra in $U^+$ which has trivial $\Lambda$-action, and is smooth as a commutative $k$-algebra.
\item[(b2)] the associated completion $\msf{U}^+$ is a local Hopf algebra in $\Rep(\Lambda)$ which is of finite global dimension, as an associative algebra.
\end{enumerate}
\end{enumerate}
We call the smash product $U=U^+\rtimes \Lambda\to \msf{u}$ the corresponding Chevalley integration for $\msf{u}$.  The integration $U\to \msf{u}$ is also parametrized by the Hopf subalgebra $\mcl{Z}$.
\end{definition}

Note that for any geometrically Chevalley $\msf{u}$ the augmentation on $\msf{u}^+$ induces a tensor embedding $\rep(\Lambda)\to \rep(\msf{u})$.  This embedding identifies $\rep(\Lambda)$ with the fusion subcategory of semisimple objects in $\rep(\msf{u})$.  In particular, the subcategory of semisimple objects in $\rep(\msf{u})$ is seen to be a \emph{tensor} subcategory.  The importance of this class of Hopf algebras, at least as far as this study is concerned, lies in the following lemma.

\begin{lemma}[{\cite[Lemma 8.2]{negronpevtsova}}]\label{lem:w/e}
If $\msf{u}$ is geometrically Chevalley, then all objects in $\rep(\msf{u})$ centralize the simples.  More specifically, the forgetful functor $Z^{\rep(\Lambda)}(\rep(\msf{u}))\to \rep(\msf{u})$ admits a canonical tensor section $\rep(\msf{u})\to Z^{\rep(\Lambda)}(\rep(\msf{u}))$.
\end{lemma}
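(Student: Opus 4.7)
The plan is to use the bosonization presentation $\msf{u}=\msf{u}^+\rtimes \Lambda$ to identify $\rep(\msf{u})$ with the category $\rep(\msf{u}^+)[\rep(\Lambda)]$ of $\msf{u}^+$-module objects internal to the braided fusion category $\rep(\Lambda)$.  Under this identification the tensor embedding $\rep(\Lambda)\subset \rep(\msf{u})$ corresponds to equipping each $X\in \rep(\Lambda)$ with the trivial $\msf{u}^+$-action $\mu_X=\epsilon\otimes \mathrm{id}_X$ given by the counit $\epsilon\colon\msf{u}^+\to k$.  For $V\in \rep(\msf{u})$ and $X\in \rep(\Lambda)$ my candidate centralizing isomorphism is simply the braiding of $\rep(\Lambda)$ applied to the underlying $\Lambda$-modules,
\[
\gamma_{V,X}:=c_{V,X}\colon V\otimes X\overset{\sim}\longrightarrow X\otimes V.
\]
This is by construction a $\Lambda$-linear isomorphism; the actual content of the lemma is that $c_{V,X}$ upgrades to a morphism in $\rep(\msf{u})$, i.e.\ is also $\msf{u}^+$-linear.

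The main step is therefore the $\msf{u}^+$-linearity check.  Unwinding the bosonized action $\mu_{V\otimes X}=(\mu_V\otimes \mu_X)\circ(\mathrm{id}\otimes c_{\msf{u}^+,V}\otimes \mathrm{id})\circ(\Delta\otimes \mathrm{id}\otimes \mathrm{id})$, triviality of $\mu_X$ combined with naturality of the braiding with respect to $\epsilon$ collapses the $\msf{u}^+$-action on $V\otimes X$ to the naive formula $a\cdot(v\otimes x)=(a\cdot v)\otimes x$; by the same mechanism the action on $X\otimes V$ simplifies to $a\cdot(x\otimes v)=(\mathrm{id}_X\otimes \mu_V)\circ(c_{\msf{u}^+,X}\otimes \mathrm{id}_V)(a\otimes x\otimes v)$.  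The required identity $c_{V,X}\circ \mu_{V\otimes X}=\mu_{X\otimes V}\circ(\mathrm{id}_{\msf{u}^+}\otimes c_{V,X})$ then reduces, after expanding $c_{\msf{u}^+\otimes V,X}$ by the hexagon, to naturality of the braiding applied to the action morphism $\mu_V\colon \msf{u}^+\otimes V\to V$.  The hypothesis (b0) guarantees that $\mu_V$ lives in $\rep(\Lambda)$ so that this naturality is available.

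Finally, I would package $V\mapsto (V,c_V)$ as a tensor section.  Functoriality in $V$ is naturality of the braiding in its first slot (morphisms in $\rep(\msf{u})$ are in particular morphisms in $\rep(\Lambda)$); the centralizing coherence $(\mathrm{id}_X\otimes c_{V,Y})(c_{V,X}\otimes \mathrm{id}_Y)=c_{V,X\otimes Y}$ and the tensor compatibility $c_{V\otimes W,X}=(c_{V,X}\otimes \mathrm{id}_W)(\mathrm{id}_V\otimes c_{W,X})$ are the two hexagon axioms for the braiding of $\rep(\Lambda)$; and the identification $F\circ s=\mathrm{id}_{\rep(\msf{u})}$ is immediate from the construction.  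The only substantive obstacle is the $\msf{u}^+$-linearity step, which is not deep but demands careful bookkeeping inside the braided category $\rep(\Lambda)$ and ultimately rests on one application of naturality of $c$ together with the hexagon.
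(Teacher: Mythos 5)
Your proposal is correct and follows essentially the same route as the cited source: the present paper simply quotes \cite[Lemma 8.2]{negronpevtsova}, and the argument there is exactly this one, namely identify $\rep(\msf{u})$ with $\msf{u}^+$-modules internal to the braided category $\rep(\Lambda)$ via the bosonization, take the half-braiding $\gamma_{V,X}=c_{V,X}$, and verify $\msf{u}^+$-linearity by one application of naturality of $c$ against the action map $\mu_V$ together with the hexagon, the remaining coherences being the two hexagon axioms. One small correction: the fact that $\mu_V\colon \msf{u}^+\ot V\to V$ is a morphism in $\rep(\Lambda)$ comes from condition (a) of Definition \ref{eq:337} (the bosonization presentation $\msf{u}=\msf{u}^+\rtimes\Lambda$ itself), not from (b0), which concerns the deformation $\msf{U}^+$ and plays no role in this lemma.
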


The two families of geometrically Chevalley Hopf algebras which we consider here are Drinfeld doubles of height 1 Borels, and the (small) quantum Borel.  We recall these examples below.

\subsection{Some examples}
\label{sect:examples}

We list some families of Hopf algebras which are investigated in detail throughout the text.  We note that the results of this work are not \emph{exclusive} to these families.  Rather, we intend to provide a (relatively) diverse collection of examples which highlight various aspects of hypersurface support, centralizing hypotheses on objects in tensor categories, spectra of stable categories, etc.
\par

We consider the following families of Hopf algebras, which also appeared in \cite{negronpevtsova}:
\begin{enumerate}
\item[(F1)]\label{it:F1} Bosonized quantum complete intersections (aka quantum linear spaces) $\msf{a}_q=\msf{a}_q(P)$, with integration $A_q\to \msf{a}_q$.
\item[(F2)]\label{it:F2} Small quantum Borels $u_q(B)$ in type $A$, at arbitrary odd order $q$, with integration $U^{DK}_q(B)\to u_q(B)$ provided by the De Concini-Kac algebra.
\item[(F3)]\label{it:F3} Drinfeld doubles $\mcl{D}(B_{(1)})$ for $B$ a Borel in an almost simple algebraic group $\mbb{G}$, over $\overline{\mbb{F}}_p$ at arbitrary $p$.  We integrate such $\mcl{D}(B_{(1)})$ via the Hopf algebra $(\O(B)\rtimes U(\mfk{n}))\rtimes kT_{(1)}$.  Here $\mfk{n}$ is the nilpotent radical in the Lie algebra for $B$, and $T\subset B$ is the maximal torus.
\item[(F4)]\label{it:F4} Functions $\O(\mcl{G})$ on a connected finite (aka infinitesimal) group sheme, with integration $\O(\mcl{H})\to \O(\mcl{G})$ provided by a choice of embedding $\mcl{G}\to \mcl{H}$ into a smooth, connected, algebraic group $\mcl{H}$.
\end{enumerate}

We recall our specific constructions of quantum complete intersections and quantum Borels below.  For (\hyperref[it:F4]{F4}) the deformation $\O(\mcl{H})$ is parametrized by functions on the associated quotient $\mcl{Z}=\O(\mcl{H}/\mcl{G})$, and for (\hyperref[it:F3]{F3}) $\mcl{Z}$ is the product $\mcl{Z}=\O(B^{(1)})\ot Z_0(\mfk{b})$ of functions on the quotient $B/B_{(1)}\cong B^{(1)}$ with the Zassenhaus subalgebra $Z_0(\mfk{b})=k[x^{[p]}-x^p:x\in \mfk{b}]$.
\par

With respect to (\hyperref[it:F4]{F4}), we are also interested in functions $\O(\mcl{G})$ on \emph{non-}connected group schemes $\mcl{G}$.  Such algebras, however, cannot be treated in the same manner as (\hyperref[it:F1]{F1})--(\hyperref[it:F4]{F4}), and are afforded their own analysis.

\begin{remark}\label{rem:comm}
The example (\hyperref[it:F4]{F4}) of functions on a connected group scheme can be understood via preexisting results for commutative local rings.  In particular, thick ideals in the stable category for $\O(\mcl{G})$ are classified by Stevenson \cite{stevenson13}.  We consider the example here for two reasons: first, it is an instructive example to keep in mind, and second, an analysis of this case is necessary for our analysis of $\O(\mcl{G})$ at non-connected $\mcl{G}$.
\end{remark}

\subsection{Elaborations}

The algebra $\msf{a}_q(P)$ is specifically the smash product $\msf{a}_q^+(P)\rtimes G$ of a finite group with a truncated skew polynomial ring
\begin{equation}\label{eq:174}
\msf{a}^+_q=\msf{a}^+_q(P)=\mbb{C}\langle x_1,\dots, x_n\rangle/(x_ix_j-q_{ij}x_jx_i,\ x_i^l)_{i\neq j}.
\end{equation}
Here $q$ is a root of unity of odd order $l$, $P=[a_{ij}]$ is an integer matrix for which $q_{ij}=q^{a_{ij}}$, $a_{ij}=-a_{ji}$ off the diagonal, and $a_{ii}=1$.  The group $G$ is the finite abelian group $(\mbb{Z}/l\mbb{Z})^n$ with generators $K_i$ such that $K_i\cdot x_j=q_{ij}x_i$.  We define the positive algebra $A_q^+$ by omitting the nilpotence relations in \eqref{eq:174}, and take $A_q=A_q^+\rtimes G$.  The parametrizing subalgebra $\mcl{Z}$ for the deformation $A_q\to \msf{a}_q$ in this case is the subalgebra $\mbb{C}[x_1^l,\dots,x_n^l]\subset A_q$ generated by the $l$-th powers of the $x_i$.

Let us also recall our construction of the (small) quantum Borel \cite{negron,negronpevtsova}.  We follow the presentation of \cite[\S 9]{negron}, which allows for certain synergies between small and large quantum groups, via quantum Frobenius.
\par

Consider $\mfk{g}$ a simple Lie algebra over $\mbb{C}$, and let $X$ be an intermediate lattice $Q\subset X\subset P$ between the associated root and weight lattices.  Rather, choose an almost-simple algebraic group $\mbb{G}$ which is of the same Dynkin type as $\mfk{g}$.  We consider the $q$-exponentiated normalized Killing form $q^{(-,-)}$ on $X$, where $q$ is again of (finite) odd order $l$.  The form $(-,-)$ is normalized so that the length of short roots is $2$, and we deal with fractional values of $(-,-)$ by formally choosing some root $\sqrt[r]{q}$ of $q$.
\par

Let $X^M\subset X$ denote the radical of the exponentiated form $q^{(-,-)}$, and consider the induced non-degenerate form on the quotient $X/X^M$.  We then have the associated braided fusion category
\[
Vect_{X/X^M},\ \ \text{the fusion category of vector spaces graded by }X/X^M.
\]
To be explicit, for $\nu,\mu\in X/X^M$ the braiding on this category is given by $c_{\mu,\nu}:\mbb{C}_\mu\ot\mbb{C}_\nu\to \mbb{C}_\nu\ot\mbb{C}_\mu$, $c_{\mu,\nu}(1_\mu,1_\nu)=q^{(\mu,\nu)}1_\nu\ot 1_\mu$.  We may take now $\Lambda=\mbb{C}G$ to be the group algebra of the characters $G:=(X/X^M)^\vee$ to rewrite this category $Vect_{X/X^M}$ as representations of a semisimple, quasitriangular, Hopf algebra $\Lambda$, in accordance with Definition \ref{eq:337}.
\par

The positive subalgebra $u_q(\mfk{n})$ in the small quantum group $u_q(\mfk{g})$ \cite{lusztig90,lusztig90II} is a Hopf algebra in the braided fusion category $Vect_{X/X^M}$, with each generator $E_\alpha$ of degree $\alpha$, and we define the quantum Borel for $\mbb{G}$ at $q$ as the bosonization
\[
u_q(B)=u_q(\mfk{n})\rtimes G.
\]
The quantum Borel is geometrically Chevalley, with Chevalley integration provided by the de Concini-Kac form $U_q^{DK}(\mfk{n})\to u_q(\mfk{n})$,
\[
U_q^{DK}(B)=U_q^{DK}(\mfk{n})\rtimes G\to u_q(B).
\]
The parametrizing subalgebra $\mcl{Z}$ in this case is generated by the $l$-th powers of the root vectors $E_\gamma\in U_q^{DK}(B)$, where $\gamma$ runs over all positive roots \cite{deconcinikac91}.

\section{Thick ideals and support for stable tensor categories}
\label{sect:ideals}

We begin our discussion of thick ideals and supports for stable categories.  Throughout the section we consider a finite tensor category $\msc{C}$, and its corresponding stable category $\stab(\msc{C})$.
\par

One of the points of this portion of the paper is to explain how various centralization hypotheses can be employed to reduce analyses of thick ideals in the (generally non-braided) category $\stab(\msc{C})$ to ones which are essentially aligned with the braided/commutative analyses of, say, \cite{balmer05,friedlanderpevtsova07}.  We also discuss localization functors, and at the conclusion of the section we provide some remarks concerning abelian versus triangulated categories.

\subsection{Thick and localizing ideals in stable categories}
\label{sub:thick}
One should recall at this point the necessary notions of central, and centralizing, objects from Section \ref{sect:ftc}.  We recall also that a thick subcategory in a triangulated category is a full triangulated subcategory which is closed under taking summands.

\begin{definition}
A thick ideal in $\stab(\msc{C})$ is a thick subcategory which is preserved under the actions of $\stab(\msc{C})$ on the left and right.
\par

A thick ideal $\msc{I}$ is said to be centrally generated if $\msc{I}$ is generated by a collection of objects which are central in $\stab(\msc{C})$ (in the sense of \ref{sect:stabC}).
\end{definition}

By the ideal generated by a collection of objects $\{V_i\}_{i\in I}$ we mean the smallest thick ideal $\langle V_i:i\in I\rangle^{\ot}$ in $\stab(\msc{C})$ which contains all of the generators $V_i$.  

In addition to considering thick subcategories and thick ideas, we also consider localizing subcategories and localizing ideals in the big stable category $\Stab(\msc{C})$.  By a localizing subcategory (resp.\ ideal) we mean a thick subcategory (resp.\ ideal) which is additionally closed under arbitrary set-indexed sums.  For a thick ideal $\msc{I}$ in $\stab(\msc{C})$, the localizing subcategory $\msc{I}_{\rm loc}$ which it generates in $\Stab(\msc{C})$ is a localizing ideal \cite[Lemma 5.8]{rickard97}.

\subsection{An aside on centralities and methods}

Before getting into any technicalities, let us take a moment to orient ourselves around this notion of a centrally generated ideal, and its significance to our study.  We view the central generation property for thick ideals as fundamental, although it only makes a minimal appearance in this section.  Centrally generated ideals also appear to be the ``good ideals" in $\stab(\msc{C})$, from the perspective of classification, and are the ``important ideals" from the perspective laid out in Section \ref{sect:non-braided}.
\par

In all of the examples we are able to handle explicitly, \emph{all} ideals in $\stab(\msc{C})$ are shown to be centrally generated (see Sections \ref{sect:central} and \ref{sect:cen_G}).  We present the following question, which we return to in Section \ref{sect:cen_G}.

\begin{question}\label{q:quest_gen}
For any finite tensor category $\msc{C}$, are all thick ideals in $\stab(\msc{C})$ centrally generated?
\end{question}

Of course, this question has to do with the relationship between $\msc{C}$ and its Drinfeld center $Z(\msc{C})$, and also to do with the relationship between their cohomologies (cf.\ \cite{negronplavnik}).  One sees from varied studies in the subject that the nature of $Z(\msc{C})$ as a braided tensor category, and the nature of $\msc{C}$ as a ``plain" tensor category, are strongly intertwined, see e.g.\ \cite{ngschauenburg07,etingofnikshychostrik10,changcui19}.
\par

Although we are primarily interested in central generation of ideals, there are additional weaker notion of centrality and weaker notions of central generation which have proved useful to consider.  Specifically, we have found it (very) useful to consider ideals in $\stab(\msc{C})$ which are generated by objects which centralize the simples in $\msc{C}$.
\par

In general we use this weaker notion of centrality in our classification of thick two-sided ideals in the stable category $\stab(\msc{C})$, for a given tensor category $\msc{C}$.  One can see for example Proposition \ref{prop:classify} and Theorem \ref{thm:balmer} below.  Given such a classification we then conclude that all ideals in $\stab(\msc{C})$ are in fact centrally generated, as in Theorem \ref{thm:center_genI} below.  Now, let us continue.

\subsection{Supports for stable categories}
\label{sect:supports}

By a support theory $(Y,\supp)$ for the stable category $\stab(\msc{C})$ we mean a choice of a topological space $Y$, and an assignment $V\mapsto\supp(V)$ of a closed subset in $Y$ to each object $V$ in $\stab(\msc{C})$, which respects the triangulated structure on the stable category.   Specifically, we demand the following:
\begin{itemize}
\item $\supp(V)=\supp(\Sigma V)$ for all $V$ in $\msc{C}$, and $\supp(0)=\emptyset$.
\item $\supp(V\oplus W)=\supp(V)\cup\supp(W)$.
\item Any exact triangle $V\to W\to V'\to \Sigma V$ implies an inclusion of supports $\supp(W)\subset (\supp(V)\cup\supp(W))$.
\end{itemize}

Of course, our main examples are hypersurface support and cohomological support.  To recall, cohomological support is defined as follows: The extensions of the unit $\Ext^\ast_\msc{C}(\1,\1)$ act on $\Ext^\ast_\msc{C}(V,V)$ via the tensor structure $V\ot -$, and for 
\[
\msf{Y}=\Proj\Ext^\ast_\msc{C}(\1,\1)_{\rm red}
\]
we consider the associated sheaf $\Ext^\ast_\msc{C}(V,V)^\sim$ on $\msf{Y}$ and define
\[
\supp^{coh}_\msf{Y}(V):=\operatorname{Supp}_\msf{Y}\Ext^\ast_\msc{C}(V,V)^\sim.
\]
We note that hypersurface and cohomological supports are often identified, in the cases where they are both defined, via Proposition \ref{prop:538} and \cite[Proposition 3.3(iii)]{berghplavnikwitherspoon}.

By a \emph{triangular} extension of a particular theory $(Y,\supp)$ to the big stable category $\Stab(\msc{C})$ we mean an assignment of sub\emph{sets} $\supp(M)$ in $Y$ to objects $M$ in $\Stab(\msc{C})$ which respects the triangulated structure, just as in the compact case.  However, we require additionally that the extended support splits over arbitrary sums in $\Stab(\msc{C})$, $\supp(\oplus_{i\in I}M_i)=\cup_{i\in I}\supp(M_i)$.

\subsection{Supports and thick ideals}
\label{sect:stick}

In this subsection we describe various types of support theories which arise in practice.  In this text we are only interested in support theories which take values in Noetherian schemes.  So, we always take this point for granted.

\begin{definition}\label{def:reasonable}
Consider a support theory $(Y,\supp)$ for $\stab(\msc{C})$.  Assume that $Y$ is homeomorphic to a Noetherian scheme.  We call $(Y,\supp)$
\begin{itemize}
\item \emph{faithful} if vanishing of support $\supp(V)=\emptyset$ implies $V\cong 0$.
\item \emph{exhaustive} if all closed subsets in $Y$ are realized as supports of objects in $\stab(\msc{C})$.
\item \emph{multiplicative} if $\supp(V\ot W)\subset \big(\supp(V)\cap\supp(W)\big)$ for all $V$ and $W$, and
\[
\supp(V\ot W)=\supp(V)\cap \supp(W)
\]
whenever one of $V$ or $W$ centralizes the simples.
\end{itemize}
\end{definition}

Cohomological support theories are popular because they are faithful and exhaustive \cite{berghplavnikwitherspoon}, but they are not multiplicative in general, at least when $\msc{C}$ is non-braided.  One can consider, for example, the category of sheaves $\Coh(\mcl{G})$ on a finite non-connected group scheme.  As was observed in \cite{bensonwitherspoon14,pevtsovawitherspoon15} \cite[Example 10.2]{negronpevtsova}, for particular choices of such $\mcl{G}$, and particular choices of $V$ and $W$ in $\Coh(\mcl{G})$, the inclusion
\[
\supp^{coh}_\msf{Y}(V\ot W)\nsubseteq \big(\supp^{coh}_\msf{Y}(V)\cap\supp^{coh}_\msf{Y}(W)\big)
\]
will already fail.

\begin{remark}
When $\msc{C}$ is braided, all objects centralize the simples, so that the multiplicative property requires that $\supp(V\ot W)=\supp(V)\cap\supp(W)$ at all $V$ and $W$.  We wouldn't claim that the above notion of multiplicativity is definitively \emph{the} correct notion outside of the braided setting, but it is sufficiently functional for the moment.  One has the obvious notion of a sub-multiplicative theory, which we leave the reader to ponder at their leisure.
\end{remark}

Recall that a specialization closed subset in a topological space $Y$ is a subset $\Theta$ which contains the closure $\overline{x}\subset \Theta$ of each point $x\in \Theta$.  Consider now $(Y,\supp)$ a multiplicative support theory for $\stab(\msc{C})$.  Then for any specialization closed subset $\Theta\subset Y$ the associated subcategory of objects which are supported in $\Theta$,
\[
\msc{I}_\Theta:=\{\text{The full subcategory of objects $V$ in $\stab(\msc{C})$ with }\supp(V)\subset \Theta\},
\]
is a thick ideal in $\stab(\msc{C})$.  Conversely, for any thick ideal $\msc{I}$ in $\stab(\msc{C})$ the subset
\[
\supp(\msc{I}):=\cup_{V\in\msc{I}}\supp(V)
\]
is specialization closed in $Y$.  So we have maps
\begin{equation}\label{eq:1058}
\{\text{\rm thick ideals in }\stab(\msc{C})\}\underset{\supp(?)}{\overset{\msc{I}_?}\leftrightarrows}
\{\text{\rm specialization closed subsets in }Y\}
\end{equation}
which restrict to maps
\begin{equation}\label{eq:1063}
\{\text{\rm finitely generated ideals in }\stab(\msc{C})\}\underset{\supp(?)}{\overset{\msc{I}_?}\leftrightarrows} \{\text{\rm closed subsets in }Y\}.
\end{equation}

\begin{definition}
A multiplicative support theory $(Y,\supp)$ for $\stab(\msc{C})$ is said to \emph{classify thick ideals} in $\stab(\msc{C})$ if the maps \eqref{eq:1058} and \eqref{eq:1063} are mutually inverse bijections.
\end{definition}

The classification of ideals in $\stab(\msc{C})$ is strongly related to the existence of well-behaved extensions of support theories for $\stab(\msc{C})$ to the big stable category.  We consider the following

\begin{definition}\label{def:text}
A \emph{tensor extension} of a support theory $(Y,\supp)$ for $\stab(\msc{C})$ is a triangular extension to $\Stab(\msc{C})$ such that
\begin{enumerate}
\item[(a)] for any $V$ in $\stab(\msc{C})$ and $M$ in $\Stab(\msc{C})$ we have a containment
\[
\supp(V\ot M)\subset \big(\supp(M)\cap\supp(V)\big)\ \ \text{\bf or}\ \ \supp(M\ot V)\subset \big(\supp(M)\cap\supp(V)\big).
\]
\item[(b)] for any $V$ in $\stab(\msc{C})$ which centralizes the simples, and arbitrary $M$ in $\Stab(\msc{C})$, we have an equality
\[
\supp(V\ot M)=\supp(V)\cap \supp(M)\ \ \text{\bf or}\ \ \supp(M\ot V)=\supp(M)\cap\supp(V).
\] 
\end{enumerate}
We call this extension \emph{faithful} if $\supp(M)=\emptyset$ implies $M\cong 0$, for arbitrary $M$ in $\Stab(\msc{C})$.
\end{definition}

Obviously faithfulness of a given extension requires that the original theory was faithful as well.

In the statement of Definition \ref{def:text} we have denoted the extension to $\Stab(\msc{C})$ simply by $(Y,\supp)$, by abuse of notation.  In practice, there are usually many possible choices of extensions for a given theory.  These various extensions may agree, or may not agree.  For example, for restricted representations $\msc{C}=\rep^{\rm res}(\mfk{n})$ of a nilpotent restricted Lie algebra in finite characteristic, one may consider the $\pi$-point extension \cite{friedlanderpevtsova07}, local cohomology extension \cite{bensoniyengarkrause08}, or hypersurface extension (see Section \ref{sect:tensor}).  All of these extensions are tensor extensions, and the $\pi$-point and local cohomology extensions can be shown to agree \cite{bensoniyengarkrausepevtsova18}, but neither of these extensions are known to agree with the hypersurface extension at the moment.

\begin{definition}\label{def:lavish}
A support theory $(Y,\supp)$ for $\stab(\msc{C})$ is called \emph{lavish} if it is exhaustive, multiplicative, and admits a faithful tensor extension to $\Stab(\msc{C})$.
\end{definition}

The production of--what we've termed--lavish support theories is, at the moment, the primary method employed in the classification of thick ideals for a given tensor triangulated category.  Of course, we have made some alterations to the standard framework to account for our generally non-braided settings.

\begin{remark}
The sidedness of (a) and (b) mirrors, non-coincidentally, the sidedness which appears in the definition of an integration of an integrable Hopf algebra, Definition \ref{def:integrable}.  So, in practice, the sidedness of (b1) and (b2) can be fixed across all $V$ and $M$, and the finite-dimensional object $V$ can be taken to act uniformly on the left (resp.\ on the right) in these formulas.  However, as far as proofs are concerned, it suffices to assume that for a given $V$, and a given $M$, at least one of the containments of (a) holds, and when $V$ centralizes the simples at least one of the equalities of (b) holds.
\end{remark}

\begin{remark}
One might compare Definition \ref{def:text} with that of a noncommutative support datam from \cite[Definition 4.1.1]{nakanovashawyakimov}.  The tensor relation employed in \cite{nakanovashawyakimov} can be seen, fundamentally, as a relation on products of thick ideals (rather than objects) \cite[Lemma 4.3.1]{nakanovashawyakimov}.
\end{remark}

\subsection{Localization functors}

We recall some information from~\cite{neeman01,bensoniyengarkrause08}.  An exact endomorphism $L:\mcl{T}\to \mcl{T}$ of a triangulated category $\mcl{T}$ is said to be a localization functor if there exists a natural transformation $\eta:id_\mcl{T}\to L$ so that $L\eta:L\to L^2$ is an isomorphism, and $L\eta=\eta L$.  We suppose also that, if $L$ vanishes on a collection $\{X_\alpha\}_\alpha$ in $\mcl{T}$, and the coproduct $\oplus_{\alpha} X_\alpha$ exists in $\mcl{T}$, then $L$ also vanishes on $\oplus_\alpha X_\alpha$.  In this case $L$ is paired with another functor $\Gamma:\mcl{T}\to \mcl{T}$ and transformation $\Gamma\to id_{\mcl{T}}$ so that, at any $X$ in $\mcl{T}$, we have a triangle
\begin{equation}\label{eq:1006}
\Gamma X\to X\to LX.
\end{equation}
Consider $\ker(L)$ the kernel of $L$ and $\ker(L)^\perp$ the collection of all objects which admit no non-zero maps from $\ker(L)$.  By~\cite[Lemma 3.3]{bensoniyengarkrause08} the functor $\Gamma$ has image in the subcategory $\ker(L)$, and $L$ has image in $\ker(L)^\perp$.
\par

Consider now $\mcl{T}$ a compactly generated triangulated category, and let $\msc{I}\subset \mcl{T}^c$ be a thick subcategory.  We say $\msc{I}$ admits a localization functor if there exists a localization $L_\msc{I}$ as above with $\mcl{T}^c\cap \mrm{ker}(L_\msc{I})=\msc{I}$.

\begin{lemma}[{\cite[Corollary 7.2.2]{krause10}}]\label{lem:L_K}
Let $\mcl{T}$ be a compactly generated triangulated category which admits arbitrary sums, and $\msc{I}\subset \mcl{T}^c$ be a thick subcategory of compact objects.  Then the localizing subcategory $\msc{I}_{\rm loc}$ generated by $\msc{I}$ in $\mcl{T}$ is such that $\msc{I}_{\rm loc}\cap \mcl{T}^c=\msc{I}$.
\end{lemma}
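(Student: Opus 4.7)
The plan is to realize $\msc{I}_{\rm loc}$ as the kernel of a Bousfield localization functor on $\mcl{T}$, and then identify its compact objects with those of $\msc{I}$ by exploiting the fact that the Verdier quotient $\mcl{T}/\msc{I}_{\rm loc}$ remains compactly generated with a transparent description of its compact objects. The inclusion $\msc{I}\subseteq \msc{I}_{\rm loc}\cap \mcl{T}^c$ is tautological, so the content lies in the reverse inclusion.

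First I would invoke the standard Bousfield localization theorem for compactly generated triangulated categories: since $\msc{I}_{\rm loc}$ is generated, as a localizing subcategory, by a set of compact objects of $\mcl{T}$ (namely a small skeleton of $\msc{I}$), the inclusion $\msc{I}_{\rm loc}\hookrightarrow \mcl{T}$ admits a right adjoint. This produces a localization functor $L:\mcl{T}\to \mcl{T}$ with $\ker(L)=\msc{I}_{\rm loc}$, of the sort recalled in Section~\ref{sub:thick}, together with the associated Verdier quotient $Q:\mcl{T}\to \mcl{T}/\msc{I}_{\rm loc}$. Moreover, $L$ preserves arbitrary coproducts precisely because $\ker(L)$ is generated by compacts.

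Next I would apply the theorem of Neeman (building on Thomason--Trobaugh) which identifies the compacts in such quotients: the canonical functor $\mcl{T}^c/\msc{I}\to (\mcl{T}/\msc{I}_{\rm loc})^c$ is fully faithful, and exhibits its target as the idempotent completion of its source. In particular, an object $X\in \mcl{T}^c$ satisfies $QX\cong 0$ in $\mcl{T}/\msc{I}_{\rm loc}$ if and only if $X$ is a direct summand of some object of $\msc{I}$. To conclude, take $X\in \msc{I}_{\rm loc}\cap \mcl{T}^c$. Then $LX=0$, hence $QX=0$, and by the characterization just recalled $X$ is a summand of some $Y\in \msc{I}$. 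Since $\msc{I}$ is thick, and therefore closed under direct summands, we obtain $X\in \msc{I}$, as desired.

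The principal technical input, and the hardest step to carry out from scratch, is the identification of the compact objects of the Verdier quotient via Neeman's theorem; everything else is essentially formal manipulation of Bousfield localizations. The hypothesis that $\msc{I}$ is already thick (hence idempotent complete inside $\mcl{T}^c$) is exactly what allows us to absorb the idempotent completion in that identification and recover $\msc{I}$ on the nose, rather than only up to summands.
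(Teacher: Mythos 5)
Your argument is correct, and it is essentially the standard one: the paper offers no independent proof, citing Krause's Corollary 7.2.2, whose proof is exactly your derivation of $\msc{I}_{\rm loc}\cap\mcl{T}^c\subseteq\msc{I}$ from the Neeman--Thomason identification of $(\mcl{T}/\msc{I}_{\rm loc})^c$ with the idempotent completion of $\mcl{T}^c/\msc{I}$, plus thickness of $\msc{I}$ to absorb summands. So there is nothing to add beyond noting that your use of the Bousfield localization $L$ is optional, since $QX\cong 0$ for $X\in\msc{I}_{\rm loc}$ already follows from the definition of the Verdier quotient.
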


Brown representability now implies

\begin{theorem}[{\cite{rickard97},~\cite[Proposition 5.2.1]{krause10}}]\label{thm:L_K}
For $\mcl{T}$ a compactly generated triangulated category, any thick subcategory $\msc{I}$ in $\mcl{T}^c$ admits a localization functor $L_\msc{I}$ with $\ker(L_\msc{I})=\msc{I}_{\rm loc}$.  In particular, any thick ideal in $\mcl{T}^c$ admits a localization functor.
\end{theorem}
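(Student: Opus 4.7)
The plan is to realize $L_\msc{I}$ as a Bousfield localization at the localizing subcategory $\msc{I}_{\rm loc}$ generated by $\msc{I}$ inside $\mcl{T}$. The first step is to consider the right orthogonal
\[
\msc{I}^\perp := \{Y \in \mcl{T} : \Hom_{\mcl{T}}(\Sigma^i A, Y) = 0 \text{ for all } A \in \msc{I}, i \in \mbb{Z}\}
\]
and to observe that, because every $A \in \msc{I}$ is compact, the functor $\Hom_{\mcl{T}}(-, Y)$ converts arbitrary coproducts into products. Together with the two-out-of-three property along exact triangles, this ``compactness reduces to the generators'' step yields the identification $\msc{I}^\perp = (\msc{I}_{\rm loc})^\perp$: an object is right orthogonal to the entire localizing closure as soon as it is right orthogonal to the compact generators in $\msc{I}$.

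Next I would invoke Brown representability for the compactly generated category $\mcl{T}$ to produce, for every $X \in \mcl{T}$, a functorial triangle
\[
\Gamma X \to X \xrightarrow{\eta_X} L X \to \Sigma \Gamma X
\]
with $\Gamma X \in \msc{I}_{\rm loc}$ and $L X \in \msc{I}^\perp$. Concretely, one can construct $\Gamma X$ by iteratively mapping shifts of objects of $\msc{I}$ into $X$, attaching cones, and passing to a homotopy colimit; the cofiber of the resulting map $\Gamma X \to X$ lands in $\msc{I}^\perp$ by construction. The orthogonality $\Hom(\Gamma X, L X) = 0$ forces this triangle to be unique up to unique isomorphism and to depend functorially on $X$, producing an exact endofunctor $L = L_\msc{I}$ together with a natural transformation $\eta : id_{\mcl{T}} \to L$.

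I would then verify the axioms of a localization functor as laid out before the statement of Theorem \ref{thm:L_K}. The identity $L\eta = \eta L$ and the fact that $L\eta : L \to L^2$ is an isomorphism both follow from orthogonality: any morphism from $X$ to an object of $\msc{I}^\perp$ factors uniquely through $\eta_X$, so $\eta_{LX}$ is inverse to $L(\eta_X)$ and agrees with $L(\eta_X)$ up to this unique factorization. Closure of $\ker(L) = \msc{I}_{\rm loc}$ under arbitrary coproducts is immediate from its definition as a localizing subcategory. Finally, the identification $\mcl{T}^c \cap \ker(L_\msc{I}) = \mcl{T}^c \cap \msc{I}_{\rm loc} = \msc{I}$ is exactly Lemma \ref{lem:L_K}, completing the proof; the ``in particular'' clause is immediate since a thick ideal is, a fortiori, a thick subcategory.

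The main obstacle is the construction of the approximation triangle $\Gamma X \to X \to L X$: this is where Brown representability, and hence the compact generation hypothesis on $\mcl{T}$, is used in an essential way. Once the orthogonality equality $\msc{I}^\perp = (\msc{I}_{\rm loc})^\perp$ and the existence of functorial approximations are established, the remaining axioms of a Bousfield localization are formal consequences of the uniqueness forced by orthogonality.
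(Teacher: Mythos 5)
Your proposal is correct and takes essentially the same route as the paper, which simply invokes Brown representability and cites Rickard and Krause for the standard Bousfield localization at the localizing subcategory $\msc{I}_{\rm loc}$ generated by a set of compact objects, with Lemma \ref{lem:L_K} then giving $\mcl{T}^c\cap\ker(L_\msc{I})=\msc{I}$. One minor correction: the identification $\msc{I}^\perp=(\msc{I}_{\rm loc})^\perp$ does not use compactness at all, since $\Hom_{\mcl{T}}(-,Y)$ always turns coproducts into products; compactness of the objects of $\msc{I}$ is instead what makes the iterative homotopy-colimit/Brown-representability step work, i.e.\ it guarantees that the cofiber of $\Gamma X\to X$ lies in $\msc{I}^\perp$.
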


We recall that $\Stab(\msc{C})$ admits arbitrary sums, and has compact objects $\Stab(\msc{C})^c=\stab(\msc{C})$, so that the above general results are applicable to the situations under consideration here.

\subsection{Remarks on abelian, infinity, and triangulated categories}

Let us make some remarks on our setting.  In this study we operate in-between the (abelian) tensor category $\msc{C}$ and its triangulated counterpart $\stab(\msc{C})$.  So, we do certain operations at the abelian level, then observe how these operations manifest at the triangulated level.
\par

As far as we can tell, one cannot simply work at the triangulated level to reconstruct the analysis we present here.  This is not so shocking as it is known that triangulated categories are too coarse to admit certain refined constructions.  We would suggest instead that one should work with stable tensor $\infty$-categories, where constructions such as Drinfeld centralizers should be well-behaved, and also agree with their abelian counterparts (cf.\ \cite{benzvifrancisnadler10}).
\par

As another point for consideration, the $\infty$-category perspective should allow one to work directly with the bounded derived (rather than stable) category for $\msc{C}$, which one can consider as the subcategory of compacts in its $\Ind$-completion \cite[\S 5.3]{lurie09}.  With such a completion operation one could hope, in the long run, to produce a means of ``tautologically extending" supports from rigid tensor $\infty$-categories to their associated presentable ($\Ind$-)categories.  Such investigations seem impossible to pursue at the triangulated level.

\section{Classifying thick ideals, in the abstract}
\label{sect:abstract}

We consider the stable category of a finite tensor category $\msc{C}$, and discuss relations between extensions of support theories and the classification of thick ideals in $\stab(\msc{C})$.  We pay special attention to the case of a tensor category $\msc{C}$ with the so-called Chevalley property, which is the appropriate categorical framing for the examples of Section \ref{sect:examples}.  The main point of the section is to prove the following.

\begin{theorem}\label{thm:classify}
Suppose that $\msc{C}$ is a tensor category with the Chevalley property.  Then any lavish support theory for $\stab(\msc{C})$ classifies thick ideals (see Definition \ref{def:lavish}).
\end{theorem}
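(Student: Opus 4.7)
The strategy is to prove that the maps in \eqref{eq:1058} and \eqref{eq:1063} are mutually inverse bijections, following the Benson--Carlson--Rickard template but adapted to the non-braided Chevalley setting. First, for a specialization closed $\Theta \subset Y$ one has $\Theta \subset \supp(\msc{I}_\Theta)$ by exhaustiveness: writing $\Theta$ as a union of closed subsets, each closed piece is realized as $\supp(V_\alpha)$ for some $V_\alpha$, and $V_\alpha \in \msc{I}_\Theta$ by construction.  The reverse inclusion $\supp(\msc{I}_\Theta) \subset \Theta$ is tautological.  The heart of the argument is therefore the reverse composition $\msc{I}_{\supp(\msc{I})} = \msc{I}$, for which only the inclusion $\msc{I}_{\supp(\msc{I})} \subset \msc{I}$ is nontrivial.

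For this inclusion, I would invoke Theorem~\ref{thm:L_K} to attach to $\msc{I}$ a localization functor $L_\msc{I}$ with $\ker(L_\msc{I}) = \msc{I}_{\rm loc}$, and consider the triangle $\Gamma_\msc{I} \1 \to \1 \to L_\msc{I} \1$.  Given $V \in \stab(\msc{C})$ with $\supp(V) \subset \supp(\msc{I})$, tensoring the triangle with $V$ on the appropriate side yields
\[
V \ot \Gamma_\msc{I} \1 \to V \to V \ot L_\msc{I} \1.
\]
Since $\msc{I}_{\rm loc}$ is a localizing ideal (\cite[Lemma 5.8]{rickard97}), the leftmost term lies in $\msc{I}_{\rm loc}$.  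If the rightmost term vanishes, then $V \cong V \ot \Gamma_\msc{I} \1 \in \msc{I}_{\rm loc} \cap \stab(\msc{C}) = \msc{I}$ by Lemma~\ref{lem:L_K}, closing the argument.  By faithfulness of the tensor extension, it is enough to check $\supp(V \ot L_\msc{I} \1) = \emptyset$.  The Chevalley hypothesis ensures $V$ centralizes the simples, so multiplicativity of the tensor extension (Definition~\ref{def:text}(b)) gives $\supp(V \ot L_\msc{I} \1) = \supp(V) \cap \supp(L_\msc{I} \1) \subset \supp(\msc{I}) \cap \supp(L_\msc{I} \1)$.  Thus the problem reduces to showing $\supp(W) \cap \supp(L_\msc{I} \1) = \emptyset$ for every $W \in \msc{I}$.

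To obtain this final vanishing, I would prove the stronger statement $W \ot L_\msc{I} \1 = 0$ for $W \in \msc{I}$; multiplicativity applied to the Chevalley object $W$ then yields the desired intersection vanishing through faithfulness.  Tensoring the localization triangle with $W$ places $W \ot L_\msc{I} \1$ in $\msc{I}_{\rm loc}$ (all three terms of the resulting triangle lie in $\msc{I}_{\rm loc}$: the first because $\Gamma_\msc{I} \1 \in \msc{I}_{\rm loc}$ and $\msc{I}_{\rm loc}$ is a two-sided ideal, the second because $W \in \msc{I}$).  On the other hand, $L_\msc{I} \1 \in \msc{I}_{\rm loc}^\perp$ by construction of the localization, and the rigid adjunction
\[
\Hom_{\Stab(\msc{C})}(X, W \ot L_\msc{I} \1) \cong \Hom_{\Stab(\msc{C})}(W^\vee \ot X, L_\msc{I} \1),
\]
combined with the two-sidedness of $\msc{I}_{\rm loc}$ (which sends $X \in \msc{I}_{\rm loc}$ to $W^\vee \ot X \in \msc{I}_{\rm loc}$), forces $W \ot L_\msc{I} \1 \in \msc{I}_{\rm loc}^\perp$.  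Hence $W \ot L_\msc{I} \1 \in \msc{I}_{\rm loc} \cap \msc{I}_{\rm loc}^\perp = 0$, as required.

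The main obstacle in executing this plan is the careful bookkeeping of sidedness: the tensor extension axiom supplies multiplicativity on only one of the two sides at any given pair, and the Chevalley centralizing structure, the ideal property of $\msc{I}_{\rm loc}$, and the rigid adjunction must all be organized to operate consistently on the same side throughout the argument.  The sidedness remark following Definition~\ref{def:text} indicates that in practice one can fix the side uniformly, after which the argument proceeds as a direct adaptation of the classical Rickard/Balmer approach to thick-ideal classification.
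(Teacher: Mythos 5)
Your overall template is the same as the paper's (Rickard-style localization functors, a tensor-idempotent vanishing argument, then faithfulness and multiplicativity of the extended theory), but there is a genuine gap at the point where you write ``the Chevalley hypothesis ensures $V$ centralizes the simples.''  This is not what the Chevalley property gives, and it is used twice: once for the object $V$ with $\supp(V)\subset\supp(\msc{I})$, and once for arbitrary $W\in\msc{I}$ when you invoke Definition \ref{def:text}(b) to convert $W\ot L_{\msc{I}}\1=0$ into $\supp(W)\cap\supp(L_{\msc{I}}\1)=\emptyset$.  The Chevalley property only says that the semisimple objects form a tensor subcategory; it does not provide a $\msc{D}$-centralizing structure on an arbitrary object (that stronger statement is established in the paper only for \emph{geometrically Chevalley Hopf algebras}, Lemma \ref{lem:w/e}, not for abstract Chevalley categories).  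What the categorical Chevalley hypothesis actually yields is Lemma \ref{lem:1071}: every thick ideal is \emph{generated} by objects which centralize the simples, the key construction being the averaged object $o(V)=\oplus_i\,\lambda_i\ot V\ot\lambda^i$, which centralizes the simples, lies in $\langle V\rangle^{\ot}$, and contains $V$ as a summand.  Since the multiplicative/tensor-extension axioms are only asserted for objects centralizing the simples, your two appeals to them are unjustified as written; this averaging construction is precisely the missing idea, and it is the content of the paper's reduction of Theorem \ref{thm:classify} to Proposition \ref{prop:classify} via Lemma \ref{lem:1071}.

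The argument can be repaired along your own lines, but only by routing everything through such centralizing objects: replace $V$ by $o(V)$ (note $\supp(o(V))\subset\supp(V)$ by sub-multiplicativity), run your localization argument to conclude $o(V)\in\msc{I}$, and then $V\in\msc{I}$ as a summand; and for the vanishing $\supp(\msc{I})\cap\supp(L_{\msc{I}}\1)=\emptyset$, apply your idempotent computation only to a generating set of $\msc{I}$ consisting of centralizing objects, observing that $\supp(\msc{I})$ equals the union of the supports of those generators.  With that correction your route (localizing the whole ideal $\msc{I}$ and tensoring the idempotent triangle for $\1$) is a legitimate variant of the paper's object-by-object argument with $L_W$ in Proposition \ref{prop:classify}.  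Two smaller points: your rigid-adjunction proof that $W\ot L_{\msc{I}}\1\in\msc{I}_{\rm loc}\cap\msc{I}_{\rm loc}^{\perp}=0$ is fine and matches the paper's use of the generator $\Lambda$; but you do not address the second half of the classification, namely that the bijection restricts to one between finitely generated ideals and \emph{closed} subsets, which in the paper requires realizing each closed set as the support of a single centralizing object (Lemma \ref{lem:9000}).
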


The proof of Theorem \ref{thm:classify} is given at the conclusion of Subsection \ref{sect:idunnowhattocallthis} below.

\subsection{Classifying thick ideals}
\label{sect:classify}

Recall from Section \ref{sect:stabC} the notions of central and centralizing objects in $\stab(\msc{C})$.  Below we prove the following

\begin{proposition}\label{prop:classify}
Consider a finite tensor category $\msc{C}$, and suppose that all thick ideals in $\stab(\msc{C})$ are generated by objects which centralize the simples.  Then any lavish support theory for $\stab(\msc{C})$ classifies thick ideals.
\end{proposition}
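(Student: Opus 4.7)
The plan is to verify directly that the two maps in \eqref{eq:1058} are mutually inverse; the restriction to \eqref{eq:1063} will then follow by standard compactness arguments. Both maps are manifestly inclusion preserving, $\msc{I} \mapsto \supp(\msc{I})$ lands in specialization closed subsets by construction, and $\Theta \mapsto \msc{I}_\Theta$ lands in thick ideals by multiplicativity. So the real content is in the two identities $\supp(\msc{I}_\Theta) = \Theta$ and $\msc{I}_{\supp(\msc{I})} = \msc{I}$.

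The identity $\supp(\msc{I}_\Theta) = \Theta$ is essentially formal: any specialization closed $\Theta$ in the Noetherian space $Y$ is a union of closed subsets $Y' \subset \Theta$, and for each such $Y'$ exhaustivity supplies a $V \in \stab(\msc{C})$ with $\supp(V) = Y'$. These $V$ lie in $\msc{I}_\Theta$, so $\Theta \subset \supp(\msc{I}_\Theta)$; the reverse inclusion is tautological.

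For the main identity $\msc{I}_{\supp(\msc{I})} \subset \msc{I}$, the approach is Rickard-style localization. By Theorem \ref{thm:L_K} there is a localization functor $L_\msc{I}$ on $\Stab(\msc{C})$ with kernel $\msc{I}_{\rm loc}$. Given $W \in \stab(\msc{C})$ with $\supp(W) \subset \supp(\msc{I})$, the triangle $\Gamma W \to W \to L_\msc{I} W$ places $\Gamma W \in \msc{I}_{\rm loc}$ and $L_\msc{I} W \in \msc{I}_{\rm loc}^\perp$. By faithfulness of the tensor extension combined with Lemma \ref{lem:L_K}, it suffices to show $\supp(L_\msc{I} W) = \emptyset$. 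The subcategory of $M \in \Stab(\msc{C})$ with $\supp(M) \subset \supp(\msc{I})$ is localizing, by the triangular and set-indexed sum axioms for the extended support, so every object of $\msc{I}_{\rm loc}$ has support in $\supp(\msc{I})$; applied to $\Gamma W$ and combined with the triangle this already gives $\supp(L_\msc{I} W) \subset \supp(\msc{I})$.

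The decisive computation, and the place where the central generation hypothesis enters, is the following: for any generator $V_i$ of $\msc{I}$ which centralizes the simples, I claim $V_i \otimes L_\msc{I} W = 0$. This object lies in $\msc{I}_{\rm loc}$ because $V_i \in \msc{I}$ and $\msc{I}_{\rm loc}$ is a two-sided ideal, and it lies in $\msc{I}_{\rm loc}^\perp$ because rigidity and the same two-sided ideal property (applied to $V_i^\vee$) show that tensoring with the rigid object $V_i$ preserves $\msc{I}_{\rm loc}^\perp$. Hence $V_i \otimes L_\msc{I} W \in \msc{I}_{\rm loc} \cap \msc{I}_{\rm loc}^\perp = 0$, and by the same argument $L_\msc{I} W \otimes V_i = 0$. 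The centralizing property of $V_i$ now activates Definition \ref{def:text}(b), yielding $\supp(V_i) \cap \supp(L_\msc{I} W) = \emptyset$ (selecting whichever of the two sidednesses the axiom supplies; both tensor products vanish so it makes no difference). Ranging over $i$ gives $\supp(\msc{I}) \cap \supp(L_\msc{I} W) = \emptyset$, and combined with $\supp(L_\msc{I} W) \subset \supp(\msc{I})$ this forces the desired vanishing $\supp(L_\msc{I} W) = \emptyset$. The trickiest point in the whole argument, I expect, is the bookkeeping of left versus right sidedness in Definition \ref{def:text}, but as noted this issue evaporates in the final computation because the relevant tensor products vanish on both sides.
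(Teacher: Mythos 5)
Your argument is correct, and it runs on the same engine as the paper's proof---Rickard-style localization functors (Theorem \ref{thm:L_K}, Lemma \ref{lem:L_K}), faithfulness of the tensor extension, and Definition \ref{def:text}(b) applied at a generator centralizing the simples---but it is organized differently in a way worth noting. The paper localizes at a \emph{single} centralizing object $W$ and first proves the object-level implication \eqref{eq:776}, that $\supp(V)\subset\supp(W)$ forces $\langle V\rangle^\ot\subset\langle W\rangle^\ot$ for $V,W$ both centralizing the simples; the identity $\msc{I}_{\supp(\msc{I})}=\msc{I}$ is then assembled from this using the central generation hypothesis (applied both to $\msc{I}$ and to the ideal generated by the tested object). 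You instead localize at the ideal $\msc{I}$ itself, which lets you test an arbitrary compact object with no centralizing hypothesis on it, invoking centrality only for the generators $V_i$ of $\msc{I}$; and your vanishing $V_i\ot L_\msc{I}W=0$ comes from the clean observation that this object lies in $\msc{I}_{\rm loc}\cap\msc{I}_{\rm loc}^\perp=0$ (via rigidity and two-sidedness of $\msc{I}_{\rm loc}$), where the paper instead shows $W\ot L_W(V)$ receives no maps from shifts of $\Lambda$ and uses that $\Lambda$ generates $\Stab(\msc{C})$. These are equivalent mechanisms, and your handling of the left/right ambiguity in Definition \ref{def:text} (both products vanish, so either equality suffices) is exactly right. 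Two small points you should make explicit: ``ranging over $i$'' uses $\supp(\msc{I})=\cup_i\supp(V_i)$, which follows from sub-multiplicativity since every object of $\msc{I}=\langle V_i:i\rangle^\ot$ is supported in $\cup_i\supp(V_i)$; and the restriction to \eqref{eq:1063} is not just a compactness formality---for a closed $\Theta$ you need Lemma \ref{lem:9000} (exhaustivity plus the central generation hypothesis) to realize $\Theta=\supp(V)$ for a single object $V$ centralizing the simples, and then your main identity applied to the ideal $\langle V\rangle^\ot$ gives $\msc{I}_\Theta=\langle V\rangle^\ot$, hence finite generation; the converse direction is the trivial one. With those two sentences added, your proof is complete and, if anything, slightly leaner than the paper's.
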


We collect a few supporting lemmas before proving the proposition.

\begin{lemma}
Consider any thick ideal $\msc{I}$ is $\stab(\msc{C})$.  An object $V$ in $\stab(\msc{C})$ lies in $\msc{I}$ if and only if its duals $V^\ast$ and ${^\ast V}$ lie in $\msc{I}$.
\end{lemma}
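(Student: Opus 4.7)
The plan is to exploit the rigid structure on $\stab(\msc{C})$ together with the thickness (closure under summands) and two-sidedness of $\msc{I}$. The essential fact is that in any rigid monoidal category, $V^\ast$ is a retract of $V^\ast\otimes V\otimes V^\ast$, via the zigzag/snake identity
\[
V^\ast\xrightarrow{\mathrm{id}\otimes\eta}V^\ast\otimes V\otimes V^\ast\xrightarrow{\epsilon\otimes\mathrm{id}}V^\ast,
\]
whose composite is $\mathrm{id}_{V^\ast}$, where $\eta:\1\to V\otimes V^\ast$ and $\epsilon:V^\ast\otimes V\to\1$ are the coevaluation and evaluation maps. The analogous snake identity for the left dual shows that ${^\ast V}$ is a retract of ${^\ast V}\otimes V\otimes {^\ast V}$.

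For the forward direction, suppose $V\in\msc{I}$. Since $\msc{I}$ is two-sided, $V^\ast\otimes V\otimes V^\ast$ and ${^\ast V}\otimes V\otimes {^\ast V}$ both lie in $\msc{I}$. By the retract observation above, and since $\msc{I}$ is thick and therefore closed under summands, we conclude $V^\ast,{^\ast V}\in\msc{I}$.

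For the converse, I would invoke the canonical isomorphism $V\cong {^\ast(V^\ast)}$ (equivalently $V\cong (^\ast V)^\ast$) available in any rigid monoidal category. Applying the forward direction to the object $V^\ast\in\msc{I}$ yields $^\ast(V^\ast)\in\msc{I}$, and hence $V\in\msc{I}$. (In fact, we only need one of $V^\ast$ or ${^\ast V}$ to lie in $\msc{I}$ to conclude $V\in\msc{I}$, which gives a slightly stronger statement.)

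I don't foresee any real obstacle here: every ingredient (rigidity of $\stab(\msc{C})$, the snake identities, thickness, two-sidedness) is built into the definitions, so the argument is essentially formal. The only item that merits a brief sanity check is that the coevaluation and evaluation in $\msc{C}$ descend to $\stab(\msc{C})$ in such a way that the zigzag relation still holds in the stable category, which is immediate from the fact that $\stab(\msc{C})$ inherits its rigid monoidal structure from $\msc{C}$.
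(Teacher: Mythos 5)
Your argument is correct and matches the paper's proof in substance: both directions come down to the zigzag identities exhibiting duals as retracts of triple tensor products, combined with two-sidedness and thickness of $\msc{I}$. The only cosmetic difference is in the converse, where the paper notes directly that $V$ is a summand of $V\ot V^\ast\ot V$ (and of $V\ot {^\ast V}\ot V$), while you instead reapply the forward direction together with the canonical isomorphism $V\cong {^\ast(V^\ast)}$ --- the same formal rigidity input, and your observation that one of the two duals suffices is likewise implicit in the paper's argument.
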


\begin{proof}
By the definition of a dual for an object in $\msc{C}$ \cite[\S 2.10]{egno15}, $V^\ast$ and ${^\ast V}$ appear as summands of $V^\ast\ot V\ot V^\ast$ and ${^\ast V}\ot V\ot {^\ast V}$ respectively.  Similarly, $V$ appears as a summand in $V\ot V^\ast\ot V$ and $V\ot {^\ast V}\ot V$.  Since $\msc{I}$ is closed under taking summands and the actions of $\stab(\msc{C})$, the claim follows.
\end{proof}

\begin{lemma}\label{lem:9000}
Consider a support theory $(Y,\supp)$ for $\stab(\msc{C})$ which is both exhaustive and multiplicative.  If all ideals in $\stab(\msc{C})$ are generated by central objects (resp.\ objects which centralize the simples), then any closed subset $\Theta$ in $Y$ is realizable as the support $\Theta=\supp(V)$ of a central object $V$ (resp.\ an object $V$ which centralizes the simples).
\end{lemma}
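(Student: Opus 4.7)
The plan is to start from an arbitrary object realizing $\Theta$ as a support, and to replace it with a central (resp.\ simple-centralizing) object of the same support, exploiting the centrality hypothesis on ideals. First, I would apply exhaustiveness of $(Y,\supp)$ to choose some $W\in\stab(\msc{C})$ with $\supp(W)=\Theta$. By hypothesis, $\langle W\rangle^{\ot}$ admits a generating family $\{V_i\}_{i\in I}$ of central (resp.\ simple-centralizing) objects. Since any element of the thick ideal generated by the $V_i$ is built in finitely many steps from only finitely many of these generators via shifts, cones, summand decompositions, and the two-sided action of $\stab(\msc{C})$, there is a finite subfamily $V_{i_1},\dots,V_{i_n}$ with $W\in\langle V_{i_1},\dots,V_{i_n}\rangle^{\ot}$.

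I then set $V:=V_{i_1}\oplus\cdots\oplus V_{i_n}$. Centrality, and the simple-centralizing property, are preserved under finite direct sums: one forms the biproduct in $\stab(Z(\msc{C}))$ or $\stab(Z^{\rep(\Lambda)}(\msc{C}))$ and notes that its image under the triangulated forgetful functor to $\stab(\msc{C})$ recovers $V$. Hence $V$ inherits the required centrality property.

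It remains to verify $\supp(V)=\Theta$, which I would establish by a standard thick-ideal argument in both directions. Multiplicativity of $(Y,\supp)$ supplies the one-sided containments $\supp(X\ot Z),\ \supp(Z\ot X)\subset\supp(Z)$, from which it follows that, for any closed $\Theta'\subset Y$, the collection $\msc{J}_{\Theta'}:=\{Z\in\stab(\msc{C}):\supp(Z)\subset\Theta'\}$ is a two-sided thick ideal. Taking $\Theta'=\supp(V)$, the ideal $\msc{J}_{\supp(V)}$ contains each $V_{i_j}$ by construction, hence contains $W$; this gives $\supp(W)\subset\supp(V)$. Conversely, each $V_{i_j}$ lies in $\langle W\rangle^{\ot}$ as a hypothesized generator, so with $\Theta'=\supp(W)$ we see that $\msc{J}_{\supp(W)}$ contains each $V_{i_j}$, and therefore contains $V$; this yields $\supp(V)\subset\supp(W)=\Theta$.

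The argument is essentially formal, and I do not anticipate a genuine obstacle. The only point worth spelling out is the finiteness reduction on the generating family, but this is a standard feature of thick-ideal closure operations. It is also worth noting that the stronger ``equality'' clause of Definition~\ref{def:reasonable}---requiring centrality of one of the tensorands---is not needed for this lemma; the containment clause alone suffices to make the test subcategories $\msc{J}_{\Theta'}$ two-sided ideals, which is exactly what drives the replacement step.
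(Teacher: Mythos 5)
Your proof is correct, and it shares the paper's core idea --- realize $\Theta=\supp(W)$ by exhaustiveness, take a sufficiently central generating set of $\langle W\rangle^{\ot}$, and compare supports using the fact that $\{Z:\supp(Z)\subset\Theta'\}$ is a thick two-sided ideal (which, as you note, needs only the containment half of multiplicativity; the paper's proof uses no more) --- but the extraction of a single central object is done differently. The paper first decomposes $\Theta$ into irreducible components (using Noetherianity of $Y$), realizes an irreducible $\Theta$ as $\supp(V)$ for some $V$, and then argues via $\cup_i\supp(V_i)=\supp(\langle V\rangle^{\ot})=\supp(V)$ that the generic point of $\Theta$ lies in $\supp(V_i)$ for a single central generator $V_i$, forcing $\supp(V_i)=\Theta$; reassembling a general closed set then implicitly uses that finite direct sums of central objects are central. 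You instead skip irreducibility and generic points altogether: you invoke the (standard, and correctly flagged) fact that membership in a thick ideal only involves finitely many generators, take the direct sum $V$ of a finite subfamily with $W\in\langle V_{i_1},\dots,V_{i_n}\rangle^{\ot}$, and verify $\supp(V)=\Theta$ by the two containments. Your route is marginally more elementary (no appeal to irreducible decompositions or to $Y$ having generic points for irreducible closed sets), at the cost of producing a central object that is a finite sum of generators; the paper's route yields the slightly sharper observation that every irreducible closed subset is the support of a single member of any prescribed central generating set, which is all the lemma's statement requires and a bit more.
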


\begin{proof}
Since any closed subset in $Y$ decomposes as a union of irreducible closed subsets, it suffices to show that any irreducible closed subset in $Y$ is realizable as the support of such a central object.  Take any irreducible closed subset $\Theta$ in $Y$, and $V$ with $\Theta=\supp(V)$.  Consider the thick ideal $\langle V\rangle^{\ot}$ and a sufficiently central generating set $\{V_i\}_{i\in I}$.  (Sufficiently central meaning either central, or centralizing the simples, respectively.)  Then
\[
\cup_i\supp(V_i)=\supp(\langle V\rangle^{\ot})=\supp(V).
\]
It follows that the generic point in $\Theta$ is in $\supp(V_i)$, for some $i$, and subsequently that $\Theta=\supp(V_i)$.
\end{proof}

We now offer the proof of Proposition \ref{prop:classify}.

\begin{proof}[Proof of Proposition \ref{prop:classify}]
We first prove that an inclusion of supports $\supp(V)\subset \supp(W)$, for $V$ and $W$ centralizing the simples, implies an inclusion $\langle V\rangle^{\ot}\subset \langle W\rangle^{\ot}$.  Consider $L_W:\Stab(\msc{C})\to \Stab(\msc{C})$ the localization functor associated to the localizing ideal $\langle W\rangle^\ot_{\rm loc}$ generated by $W$.  So, we have $L_W(V)=0$ if and only if $V$ is in $\langle W\rangle^\ot_{\rm loc}$, and hence if and only if $V$ is in the thick ideal generated by $W$, by Lemma \ref{lem:L_K}.
\par

We consider the given extension of $(Y,\supp)$ to all of $\Stab(\msc{C})$.  Via the triangle $L_W(V)\to V\to \Gamma_W(V)$, and the fact that $\Gamma_W(V)$ is in the localizing ideal generated by $W$, we see that $\supp(L_W(V))\subset \supp(W)$. We suppose, arbitrarily, that the relation of Definition \ref{def:text} (b2) hold when tensoring by a finite-dimensional representation on the \emph{left}.  The argument when this relations hold on the right is completely similar.
\par

Recall that $\Lambda$ is the sum of the simples for $\msc{C}$, with multiplicity.  We have
\[
\Hom_{\Stab(\msc{C})}(\Sigma^{-n}\Lambda,W\ot L_W(V))=\Hom_{\Stab(\msc{C})}(\Sigma^{-n}W^\ast\ot \Lambda,L_W(V))=0,
\]
so that $W\ot L_W(V)=0$, since $\Lambda$ generates the big stable category.  But now
\[
\emptyset=\supp(W\ot L_W(V))=\supp(W)\cap\supp(L_W(V))=\supp(L_W(V)),
\]
since $L_W(V)$ lies in the localizing ideal generated by $W$, and hence has support contained in $\supp(W)$.  It follows that $L_W(V)=0$ and therefore that $V\subset \langle W\rangle^\ot$, as desired.  So we now have the implication
\begin{equation}\label{eq:776}
\supp(V)\subset \supp(W)\ \Rightarrow\ \langle V\rangle^{\ot}\subset \langle W\rangle^{\ot},
\end{equation}
for $V$ and $W$ which centralize the simples.
\par

Now, our central generation hypothesis and the implication \eqref{eq:776} gives
\[
\msc{I}_{\supp(\msc{I})}=\msc{I}.
\]
Furthermore, since any closed subset in $Y$ is realized as the support of an object in $\stab(\msc{C})$, we have $\supp(\msc{I}_\Theta)=\Theta$.  So we have established the desired bijection between thick ideals and specialization closed subsets in $Y$.
\par

In the event that $\Theta$ is closed, we have $\Theta=\supp(V)$ for some $V$ which centralizes the simples, by Lemma \ref{lem:9000}.  Hence the inclusion \eqref{eq:776} implies $\msc{I}_\Theta=\langle V\rangle^{\ot}$, so that $\msc{I}_\Theta$ is finitely generated.  Conversely, if $\msc{I}$ is generated by some finite collection of objects $\{V_i\}_{i=1}^n$ then $\supp(\msc{I})=\cup_{i=1}^n\supp(V_i)$ is closed.  Thus the aforementioned bijection restricts to a bijection between compactly generated ideals in $\stab(\msc{C})$ and closed subsets in $Y$.
\end{proof}

\begin{remark}
Our specific use of localization functors is informed by the arguments of \cite[Proof of Theorem 7.4.1]{boekujawanakano}, which in turn follow arguments introduced by Rickard \cite{rickard97}.
\end{remark}

\subsection{Classifying thick ideals in Chevalley categories}
\label{sect:idunnowhattocallthis}

Suppose now that $\msc{C}$ is Chevalley, aka has the Chevalley property.  By this we mean that the subcategory of semisimple objects in $\msc{C}$ is a \emph{tensor} subcategory.  Let $\msc{D}$ denote the tensor subcategory of semisimple objects in $\msc{C}$.

The Chevalley condition on $\msc{C}$ can be seen as a kind of solvability condition (although the notion of a solvable tensor category is already taken \cite[Definition 9.8.1]{egno15}), and the representation categories of all of the examples under consideration here, (\hyperref[it:F1]{F1})--(\hyperref[it:F3]{F3}) \& (\hyperref[it:G]{G}), are Chevalley.  For Chevalley categories, the centralization hypotheses of Proposition \ref{prop:classify} are always satisfied, as we now show.

\begin{lemma}\label{lem:1071}
Suppose $\msc{C}$ is Chevalley.  Then any thick ideal $\msc{I}$ in $\stab(\msc{C})$ is generated by objects which centralize the simples.
\end{lemma}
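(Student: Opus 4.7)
My plan is to replace each object $V$ of $\msc{I}$ by a larger object $\tilde V\in\msc{I}$ which visibly lifts to the Drinfeld centralizer $Z^{\msc{D}}(\msc{C})$, where $\msc{D}\subset \msc{C}$ is the tensor subcategory of semisimple objects. The Chevalley hypothesis together with the finiteness of $\msc{C}$ makes $\msc{D}$ into a fusion subcategory of $\msc{C}$ (finitely many isomorphism classes of simples, all in $\msc{C}$, closed under tensor, duals, and subquotients). List these simples as $S_1=\1,S_2,\dots,S_r$. For each $V$ in $\msc{C}$ I will form
\[
\tilde V \;:=\; \bigoplus_{i=1}^r S_i\ot V\ot S_i^{\ast},
\]
the value at $V$ of the induction functor $\mrm{Ind}\colon \msc{C}\to Z^{\msc{D}}(\msc{C})$ adjoint to the forgetful functor (which, for $\msc{D}$ fusion, exists and is computed by exactly this formula).

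First I would endow $\tilde V$ with a $\msc{D}$-centralizing structure $\gamma_{\tilde V,D}\colon \tilde V\ot D\to D\ot\tilde V$ for each $D\in\msc{D}$. Semisimplicity of $\msc{D}$ provides canonical decompositions $S_i^{\ast}\ot D\cong \bigoplus_{j}\Hom(S_j,S_i^{\ast}\ot D)\ot S_j$, from which the half-braiding can be assembled by conjugating with the (co)evaluations of the $S_i$, exactly in the style of the classical construction of $Z(\msc{C})$ from a fusion category $\msc{C}$. The hexagon axiom is then a routine diagram chase using associativity and naturality.

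Next I would note that $V\cong \1\ot V\ot\1$ appears as the $i=1$ summand of $\tilde V$, while $\tilde V$ is manifestly built from $V$ by finite biactions of $\stab(\msc{C})$ and finite direct sums. Combining these observations gives the equality of thick ideals
\[
\langle V\rangle^{\ot}=\langle\tilde V\rangle^{\ot}\ \text{ in }\ \stab(\msc{C}),
\]
so in particular $\tilde V\in\msc{I}$ whenever $V\in\msc{I}$. Consequently, for any generating family $\{V_\alpha\}$ of $\msc{I}$ the family $\{\tilde V_\alpha\}$ is again a generating family for $\msc{I}$, and by construction each $\tilde V_\alpha$ lifts to $Z^{\msc{D}}(\msc{C})$, i.e., centralizes the simples.

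The main obstacle is really the first step: producing the half-braiding on $\tilde V$ and checking its hexagon. This is where the Chevalley assumption enters essentially, since one needs $\msc{D}$ to be a tensor subcategory (closed under $\ot$ and duals) in order for the sum $\oplus_i S_i\ot V\ot S_i^{\ast}$ to admit a coherent $\msc{D}$-centralizing structure. Once this is in place, the identification $\langle V\rangle^{\ot}=\langle\tilde V\rangle^{\ot}$ and the passage from a generating set of $\msc{I}$ to a centralizing generating set are formal.
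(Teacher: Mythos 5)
Your proposal is correct and is essentially the paper's own argument: both replace $V$ by $\bigoplus_i S_i\ot V\ot S_i^{\ast}$ (the induction of $V$ to $Z^{\msc{D}}(\msc{C})$), observe that it lies in $\langle V\rangle^{\ot}$ and contains $V\cong \1\ot V\ot \1$ as a summand, and equip it with the standard $\msc{D}$-centralizing structure. The only cosmetic difference is that the paper justifies the half-braiding by identifying $\bigoplus_i S_i\ot V\ot S_i^{\ast}$ with the end $\int_{X\in\msc{D}}X\ot V\ot X^{\ast}$ and invoking Shimizu's results, which is the same construction you sketch via semisimplicity of $\msc{D}$.
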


\begin{proof}
As above, let $\msc{D}\subset \msc{C}$ denote the tensor subcategory of semisimple objects.  Let $\{\lambda_i\}_{i=1}^n$ be a complete list of simples in $\msc{C}$, and take $\lambda^i=(\lambda_i)^\ast$.  It suffices to show that for each object $V$ in $\stab(\msc{C})$, there is some object $W$ which centralizes the simples, lies in the thick ideal generated by $V$, and is such that $V$ appears as a summand in $W$.
\par

In the Hopf case, where $\msc{C}=\rep(\msf{u})$, $\msc{D}=\rep(\Lambda)$, and $Z^\msc{D}(\msc{C})=\rep(\msf{u}\bowtie \Lambda^\ast)$, one can deduce explicitly that the right adjoint $R:\msc{C}\to Z^\msc{D}(\msc{C})$ is of the form
\[
R(V)=\Lambda\bowtie V=\sum_i\lambda_i\ot V\ot \lambda^i,
\]
via the bimodule decomposition $\Lambda=\oplus_i \End_k(\lambda_i)=\sum_i\lambda_i\ot(\lambda_i)^\ast$.  So we see that $V$ is a summand of $FR(V)$, where $F$ is the forgetful functor, and that $FR(V)$ is a object in the thick ideal generated by $V$ which centralizes the simples.
\par

We claim that the formula, $FR(V)=\sum_i\lambda_i\ot V\ot \lambda^i$, holds in general.  Or, more to the point, we claim that for any $V$ in a Chevalley tensor category $\msc{C}$ the object $\sum_i \lambda_i\ot V\ot \lambda^i$ centralizes the simples.  Indeed, this follows by \cite[Lemma 4.5]{shimizu19II}.  We sketch the details, following \cite[Section 3.2]{shimizu19} (see also \cite{bruguieresvirelizier12}).
\par

Consider $o:\msc{C}\to \msc{C}$ the functor $o(V)=\sum_{i=1}^n \lambda_i\ot V\ot\lambda^i$.  At each $V$ the object $o(V)$ is naturally identified with the end $\int_{X\in \msc{D}}X\ot V\ot X^\ast$ of the functor $-\ot V\ot (-)^\ast:\msc{D}\ot\msc{D}^{op}\to \msc{C}$ \cite[Remark 3.2]{bruguieresvirelizier13}.  This identification provides each $o(V)$ with natural maps $\pi_{V,X}:o(V)\to X\ot V\ot X^\ast$ for all semisimple $X$, and provides the functor $o$ with a comonad structure $\delta:o\to o^2$.  We then obtain a centralizing structure $\gamma_V:o(V)\ot -\to -\ot o(V)$ via the composite
\[
\begin{array}{l}
\gamma_{V,X}:=(id_{X\ot o(V)}\ot ev_{\lambda})\circ (\pi_{o(V),X}\ot id_X)(\delta_V\ot id_X):\vspace{1mm}\\
\hspace{2cm}o(V)\ot X\to o^2(V)\ot X\to X\ot o(V)\ot X^\ast\ot X\to X\ot o(V).
\end{array}
\] 
\end{proof}

We now prove the main result of the section.

\begin{proof}[Proof of Theorem \ref{thm:classify}]
Lemma \ref{lem:1071} obviates the hypotheses of Proposition \ref{prop:classify}, so that the result follows directly from Proposition \ref{prop:classify}.
\end{proof}

\section{Thick ideals and spectra for finite tensor categories}
\label{sect:spec}

We explain how a support theory $(Y,\supp)$ which classifies thick ideals in $\stab(\msc{C})$ also provides, under ideal circumstances, a calculation of the spectrum of prime ideals for $\stab(\msc{C})$.  As always, $\msc{C}$ denotes a finite tensor category, and we pay special attention to the Chevalley case.  We note that all of the materials of this section are adapted, almost directly, from work of Balmer \cite{balmer05}.

\subsection{Prime ideals}

\begin{definition}
A thick ideal $\msc{P}\subset \stab(\msc{C})$ is called a prime ideal if $\msc{P}$ is a proper ideal in $\stab(\msc{C})$ and, for any thick ideals $\msc{I}$ and $\msc{J}$ in $\stab(\msc{C})$ with $\msc{I}\ot\msc{J}\subset \msc{P}$, either $\msc{I}\subset \msc{P}$ or $\msc{J}\subset \msc{P}$.
\end{definition}

This notion of a prime ideal in $\stab(\msc{C})$ is the na\"ive generalization of the notion of a prime ideal in a noncommutative ring, and is also employed in \cite{nakanovashawyakimov}.  Our notion of the spectrum of a finite tensor category, given below, also agrees with that of \cite{nakanovashawyakimov}.

\subsection{The spectrum of prime ideals}

\begin{definition}\label{def:prime}
The spectrum of $\stab(\msc{C})$ is the collection of prime ideals in $\stab(\msc{C})$,
\[
\cSpec(\msc{C})\big(=\cSpec(\stab(\msc{C}))\big):=\{\msc{P}\subset \stab(\msc{C}):\msc{P}\text{ is a thick prime ideal}\}.
\]
\end{definition}

The collection $\cSpec(\msc{C})$ has the natural structure of a ringed space--although we generally deal only with its topology.  The topology on $\cSpec(\msc{C})$ is defined as follows: A basic closed subset in $\cSpec(\msc{C})$ is a set of the form
\[
\supp^{uni}(V):=\{\msc{P}\in \cSpec(\msc{C}):V\notin \msc{P}\},
\]
where $V$ is an object in $\stab(\msc{C})$.  The basic opens are then $U_V:=\cSpec(\msc{C})-\supp^{uni}(V)$.  So $U_V$ is the collection of primes which contain $V$.  We have $U_V\cap U_W=U_{V\oplus W}$, $U_0=\cSpec(\msc{C})$, and $U_1=\emptyset$, so that these subsets form a basis of a uniquely determined topology on $\cSpec(\msc{C})$.
\par

For any open set $U\subset \cSpec(\msc{C})$ we consider the thick ideal
\[
\msc{K}_U:=\{V:\supp^{uni}(V)\cap U=\emptyset\},
\]
and corresponding quotient category $\stab(\msc{C})/\msc{K}_U$.  The sheaf of rings $\O_\msc{C}$ on $\cSpec(\msc{C})$ is then the sheafification of the presheaf
\[
\O_\msc{C}^{\rm pre}:U\mapsto \Hom_{\stab(\msc{C})/\msc{K}_U}(\1,\1).
\]

\begin{proposition}[{cf.\ \cite{balmer05}}]\label{prop:properties}
Suppose that $\msc{C}$ is a non-semisimple, finite, tensor category, and that all ideals in $\stab(\msc{C})$ are centrally generated.  Then 
\begin{enumerate}
\item $\cSpec(\msc{C})$ is a non-empty $\mrm{T}_0$ topological space.
\item $\cSpec(\msc{C})$ is quasi-compact and contains a closed point.
\item All basic opens $U_V$ in $\cSpec(\msc{C})$ are quasi-compact.
\item Irreducible closed sets in $\cSpec(\msc{C})$ have a unique generic point.
\item Pulling back along the functor $F:\stab(Z(\msc{C}))\to \stab(\msc{C})$ induces an injective map of ringed spaces $F^\ast:\cSpec(\msc{C})\to \cSpec(Z(\msc{C}))$.
\end{enumerate}
\end{proposition}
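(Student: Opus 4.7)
The plan is to follow Balmer's foundational treatment of the prime spectrum of a tensor triangulated category \cite{balmer05} closely, invoking the central generation hypothesis precisely at the junctures where Balmer's argument relies on the symmetry of the tensor product. The key computational input, shared across all five parts, is the direct description
\[
\overline{\{\msc{P}\}}\ =\ \{\msc{Q}\in \cSpec(\msc{C}):\msc{Q}\subset \msc{P}\},
\]
which is immediate from unwinding the definition of the basic closed sets $\supp^{uni}(V)$ and which both gives $T_0$ for free and forces the generic point of any irreducible closed to be the ``biggest'' prime it contains.

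For (1), $T_0$ follows at once from the above. For non-emptiness, non-semisimplicity of $\msc{C}$ yields $\stab(\msc{C})\neq 0$, so Zorn on the poset of proper thick ideals produces a maximal proper $\msc{M}$; this $\msc{M}$ is then prime via the two-sided inclusion
\[
(\msc{M}+\msc{I})\ot (\msc{M}+\msc{J})\ \subset\ \msc{M}+\msc{I}\ot\msc{J},
\]
checked on generators using only two-sidedness of $\msc{M}$. For the closed point half of (2) I apply Zorn's lemma to produce a minimal prime, after verifying (by a short case analysis on which members of a chain fail to contain chosen witnesses) that intersections of chains of primes are again prime. Quasi-compactness of $\cSpec(\msc{C})$ is then formal: a cover $\cSpec(\msc{C})=\bigcup_i U_{V_i}$ with no finite subcover presents $\langle V_i\rangle^\ot$ as a proper ideal, extendable by (1) to a prime not in the cover. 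For (3), I identify $U_V$ with $\cSpec\bigl(\stab(\msc{C})/\langle V\rangle^\ot\bigr)$ via the standard correspondence between primes in a Verdier quotient and primes in the ambient category containing the quotiented ideal, and apply (2) to the quotient.

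The heart of the argument is (4). For an irreducible closed $Z$ I define
\[
\msc{P}_Z\ :=\ \{V\in \stab(\msc{C}):\exists\,\msc{Q}\in Z,\ V\in \msc{Q}\}.
\]
Irreducibility of $Z$ (equivalently, that any two non-empty opens of $Z$ meet) is exactly what makes $\msc{P}_Z$ thick: for $V,W\in \msc{P}_Z$ the opens $Z\cap U_V$ and $Z\cap U_W$ intersect, producing a common $\msc{Q}\in Z$ containing both, and analogous reasoning handles extensions in triangles and summands. Primeness of $\msc{P}_Z$ is the first place central generation is essential: given $\msc{I}\ot\msc{J}\subset \msc{P}_Z$ with $\msc{I},\msc{J}\not\subset \msc{P}_Z$, I choose central witnesses $V\in \msc{I}\setminus \msc{P}_Z$ and $W\in \msc{J}\setminus \msc{P}_Z$, and the identity $\langle V\rangle^\ot\ot\langle W\rangle^\ot=\langle V\ot W\rangle^\ot$ valid for central $V,W$---combined with primeness of the $\msc{Q}\in Z$ witnessing $V\ot W\in \msc{P}_Z$---delivers a contradiction. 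That $\msc{P}_Z\in Z$ and $Z=\overline{\{\msc{P}_Z\}}$ are then formal against the presentation of $Z$ as an intersection of basic closeds, and uniqueness of the generic point follows from $T_0$.

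For (5), I set $F^\ast(\msc{P}):=F^{-1}(\msc{P})$. Since every object of $\stab(Z(\msc{C}))$ is sent by $F$ to a central object of $\stab(\msc{C})$, the identity $\langle F(V)\rangle^\ot\ot\langle F(W)\rangle^\ot=\langle F(V)\ot F(W)\rangle^\ot$ again applies and shows that $F^{-1}(\msc{P})$ is prime. Injectivity of $F^\ast$ is exactly where central generation of ideals in $\stab(\msc{C})$ is indispensable: given $F^{-1}(\msc{P}_1)=F^{-1}(\msc{P}_2)$, each central generator $V_\alpha$ of $\msc{P}_1$ admits a lift $\widetilde V_\alpha\in \stab(Z(\msc{C}))$ with $F(\widetilde V_\alpha)=V_\alpha$; the equality of preimages forces $\widetilde V_\alpha\in F^{-1}(\msc{P}_2)$, hence $V_\alpha\in \msc{P}_2$, yielding $\msc{P}_1\subset \msc{P}_2$, and symmetry gives equality. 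Continuity of $F^\ast$ is immediate from $(F^\ast)^{-1}(\supp^{uni}_{Z(\msc{C})}(V))=\supp^{uni}_\msc{C}(F(V))$, and the structure morphism of sheaves is induced by $F$ on endomorphisms of units in Verdier quotients. The main obstacle throughout is precisely that non-braided tensor products do not commute: this obstructs the classical Balmer arguments for primeness of $\msc{P}_Z$ in (4) and for injectivity of $F^\ast$ in (5), and in each case the central generation hypothesis is what restores the commutative-style identity $\langle V\rangle^\ot\ot\langle W\rangle^\ot=\langle V\ot W\rangle^\ot$ on central elements and closes the argument.
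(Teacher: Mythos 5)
Your overall route is the one the paper intends (it only sketches a proof: follow \cite{balmer05}, using central generation to reduce to computations with central objects, and prove (5) by lifting central generators along $F$), and your treatments of (1), (4) and (5) are essentially correct and match that sketch — in particular the generic point $\msc{P}_Z=\bigcup_{\msc{Q}\in Z}\msc{Q}$ with irreducibility giving thickness, and injectivity of $F^\ast$ from the fact that centrally generated primes are determined by their preimages in $\stab(Z(\msc{C}))$.

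There is, however, a genuine gap in your quasi-compactness argument, which then infects (3). With the paper's convention $U_V=\{\msc{P}:V\in\msc{P}\}$, a cover $\cSpec(\msc{C})=\bigcup_i U_{V_i}$ says every prime \emph{contains} some $V_i$, so to contradict it you must produce a prime containing \emph{none} of the $V_i$. Extending $\langle V_i:i\rangle^\ot$ to a maximal (hence prime) ideal does the opposite: such a prime contains every $V_i$, lies in every $U_{V_i}$, and contradicts nothing. Moreover the claim that ``no finite subcover'' makes $\langle V_i:i\rangle^\ot$ proper is unjustified — properness means $\1\notin\langle V_i:i\rangle^\ot$, which has no evident relation to the covering condition (compare a commutative ring, where $(a_1,\dots,a_n)=R$ while a prime typically contains no $a_j$). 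The correct argument is Balmer's Lemma 2.2--type Zorn argument: form a suitable multiplicative family out of the $V_i$ (in the non-braided setting, either central replacements furnished by central generation, or the twisted products $\oplus_j V\ot\lambda_j\ot W$ of Lemma \ref{lem:1043}, whose generated ideal contains $\langle V\rangle^\ot\ot\langle W\rangle^\ot$), check via a finite-subcover reduction that this family avoids $0$, and show a maximal thick ideal disjoint from it is prime; that prime avoids all $V_i$ and contradicts the cover. Your part (3) both rests on this flawed compactness step and applies statement (2) to the Verdier quotient $\stab(\msc{C})/\langle V\rangle^\ot$, which is not the stable category of a finite tensor category and whose thick ideals are not known to be centrally generated, so the proposition's hypotheses do not transfer; you should instead run the same multiplicative-family argument directly, with $J=\langle V\rangle^\ot$ in place of the zero ideal.
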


As we do not use the above results, we do not give a formal proof.  However, points (1)--(4) are proved just as in \cite{balmer05}, where one employs central generation to restrict all computations to computations with central objects.  Statement (5) follows from the fact that any two centrally generated ideals $\msc{P}$ and $\msc{P}'$ in $\stab(\msc{C})$ agree if and only if their preimages in $\stab(Z(\msc{C}))$ agree, via central generation.
\par

\begin{remark}
It seems likely at this point that $\cSpec(\msc{C})$ should in fact be identified with a \emph{closed subspace} in $\cSpec(Z(\msc{C}))$.  Indeed, it should be the support of the induction $R(\1)$ of the identity $\1$ in $\msc{C}$, where $R:\msc{C}\to Z(\msc{C})$ is the right adjoint to the forgetful functor.  Let $\Theta_\msc{C}\subset \cSpec(Z(\msc{C}))$ denote the support of $R(\1)$ in $\cSpec(Z(\msc{C}))$.  We would suggest furthermore that the structure sheaf $\O_\msc{C}$ on $\cSpec(\msc{C})\cong_{homeo} \Theta_\msc{C}$ is a finite extension of the sheaf of rings $\O_{Z(\msc{C})}$ in general, so that $(\cSpec(\msc{C}),\O_\msc{C})$ is not locally ringed, but semi-locally ringed in general.  (See Section \ref{sect:not_lr}.)
\end{remark}

\begin{remark}
Relative to the perspective of \cite{buankrausesnashallsolberg20}, we are suggesting that a central action $R\to \End(id_{\stab\msc{C}})$ should apparently factor through the action of the Drinfeld center $\Ext^\ast_{Z(\msc{C})}(\1,\1)\to \End(id_{\stab\msc{C}})$ if one hopes to classify two-sided thick ideals.
\par

One can also see \cite{vashaw} for an exploration of the relationship between spectra for $\stab(\msc{C})$ and spectra for $\stab(Z(\msc{C}))$ in some extreme settings.
\end{remark}

\subsection{The spectrum and support}
\label{sect:univ}

\begin{lemma}\label{lem:1043}
Suppose that $V$ and $W$ are in $\stab(\msc{C})$, and that one of $V$ or $W$ centralizes the simples.  Then in $\cSpec(\msc{C})$ we have
\[
\supp^{uni}(V\ot W)=\supp^{uni}(V)\cap\supp^{uni}(W).
\]
More generally, for arbitrary $V$ and $W$, we have a containment
\[
\supp^{uni}(V\ot W)\subset (\supp^{uni}(V)\cap\supp^{uni}(W)).
\]
That is to say, the pair $(\cSpec(\msc{C}),\supp^{uni})$ is a multiplicative support theory for $\stab(\msc{C})$.
\end{lemma}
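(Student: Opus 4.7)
The plan is to split the equality into two containments. The inclusion $\supp^{uni}(V\otimes W)\subset \supp^{uni}(V)\cap \supp^{uni}(W)$ is immediate: for any prime $\msc{P}$, the two-sided ideal property forces $V\otimes W\in \msc{P}$ whenever $V\in \msc{P}$ or $W\in \msc{P}$, and the contrapositive is the desired containment. This direction uses neither primeness nor any centralizing hypothesis.

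For the reverse inclusion, assume without loss of generality that $W$ centralizes the simples, and suppose $V\otimes W\in \msc{P}$; the aim is to conclude $V\in \msc{P}$ or $W\in \msc{P}$. My strategy is to invoke primeness against the pair $\msc{I}=\langle V\rangle^{\otimes}$, $\msc{J}=\langle W\rangle^{\otimes}$, which reduces the problem to verifying $X\otimes Y\in \msc{P}$ for all $X\in \msc{I}$, $Y\in \msc{J}$. Since $\langle V\rangle^{\otimes}$ coincides with the thick subcategory generated by the explicit objects $\{A\otimes V\otimes B\}$, and similarly for $\langle W\rangle^{\otimes}$, one may check containment in the thick classes $\msc{F}_Y:=\{X:X\otimes Y\in \msc{P}\}$ and $\msc{G}_X:=\{Y:X\otimes Y\in \msc{P}\}$ (thick by exactness of tensor and thickness of $\msc{P}$) just against these generators. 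This reduces the desired property to the assertion $A\otimes V\otimes B\otimes C\otimes W\otimes D\in \msc{P}$ for arbitrary $A,B,C,D$, and then, absorbing $A$ and $D$ via the two-sidedness of $\msc{P}$, to the single statement $V\otimes B'\otimes W\in \msc{P}$ for every $B'\in \stab(\msc{C})$.

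The heart of the argument is this last claim. I introduce $\msc{H}:=\{M\in \stab(\msc{C}):V\otimes M\otimes W\in \msc{P}\}$, which is thick (by exactness of tensor together with thickness of $\msc{P}$) and contains $\1$ by hypothesis. For any simple $S\in \msc{C}$, the centralizing structure on $W$ provides $S\otimes W\cong W\otimes S$, and hence $V\otimes S\otimes W\cong V\otimes W\otimes S\in \msc{P}$; so $\msc{H}$ contains every simple. Because every object of the finite-length category $\msc{C}$ is an iterated extension of simples, thickness of $\msc{H}$ then forces $\msc{H}=\stab(\msc{C})$. The principal obstacle is the bookkeeping of the middle paragraph, chiefly the justification that $\langle V\rangle^{\otimes}$ really is the thick closure of $\{A\otimes V\otimes B\}$ (equivalently, that this set is already stable under the left and right actions of $\stab(\msc{C})$) so that reduction to explicit generators is legitimate; this is formal. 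The case in which $V$ centralizes the simples instead of $W$ is handled symmetrically, by swapping a simple past $V$.
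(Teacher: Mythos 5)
Your proof is correct and follows essentially the same route as the paper: the easy containment is handled identically, and the hard direction is obtained by feeding the ideals $\langle V\rangle^{\ot}$ and $\langle W\rangle^{\ot}$ into the primeness of $\msc{P}$, with the centralizing hypothesis used exactly as in the paper to slide simples past $V$ or $W$. Your thick-subcategory argument with $\msc{H}=\{M:V\ot M\ot W\in\msc{P}\}$ is just a repackaging, carried out inside $\msc{P}$, of the paper's identity $\langle\langle V\rangle^{\ot}\ot\langle W\rangle^{\ot}\rangle^{\ot}=\langle\oplus_i(V\ot\lambda_i\ot W)\rangle^{\ot}$, so no substantive difference remains.
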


\begin{proof}
For the inclusion $\supp^{uni}(V\ot W)\subset (\supp^{uni}(V)\cap\supp^{uni}(W))$ we simply note that if $V\ot W$ is not in a prime $\msc{P}$ then neither $V$ nor $W$ can be in $\msc{P}$.
\par

Now, in general we have
\begin{equation}\label{eq:773}
\langle \langle V\rangle^{\ot}\ \ot\langle W\rangle^{\ot}\rangle^{\ot}=\langle \oplus_i (V\ot\lambda_i\ot W)\rangle^{\ot},
\end{equation}
where the sum runs over a complete list of the simples in $\msc{C}$.  To see this, note that we have the exact endomorphism $V\ot-:\stab(\msc{C})\to \stab(\msc{C})$ of right $\msc{C}$-module categories, so that the preimage of the thick right ideal generated by $V\ot(\oplus_i \lambda_i\ot W)$ contains the thick right ideal generated by $\oplus_i\lambda_i\ot W$.  But the thick right ideal generated by this sum is the two sided ideal generated by $W$.  Hence 
\begin{equation}\label{eq:1522}
V\ot \langle W\rangle^{\ot}\subset \langle \oplus_i (V\ot\lambda_i\ot W)\rangle^{\ot}.
\end{equation}
Similarly, for any $W'\in \langle W\rangle^{\ot}$ we have
\begin{equation}\label{eq:1526}
\langle V\rangle^{\ot}\ot W'\subset \langle \oplus_i (V\ot\lambda_i\ot W')\rangle^{\ot}\subset \langle \oplus_i (V\ot\lambda_i\ot W)\rangle^{\ot},
\end{equation}
where the final inclusion follows from \eqref{eq:1522} and the fact that $\oplus_i\lambda_i\ot W'\in \langle W\rangle^{\ot}$.  Hence we have an inclusion
\[
\langle \langle V\rangle^{\ot}\ \ot\langle W\rangle^{\ot}\rangle^{\ot}\subset \langle \oplus_i (V\ot\lambda_i\ot W)\rangle^{\ot}.
\]
The opposite inclusion is clear, as $\oplus_i V\ot\lambda_i\ot W$ is in $\langle V\rangle^{\ot}\ot \langle W\rangle^{\ot}$.  This verifies the formula \eqref{eq:773}.
\par

In the case that $V$ centralizes the simples, we have $\oplus_i V\ot\lambda_i\ot W\cong\oplus_i \lambda_i\ot V\ot W$ so that $\langle (\oplus_i V\ot\lambda_i\ot W)\rangle^{\ot}=\langle V\ot W\rangle^{\ot}$.  One has a similar equality when $W$ centralizes the simples.  So, for any prime $\msc{P}$, we have that
\[
\begin{array}{rll}
V\ot W\in \msc{P}& \Leftrightarrow\ \langle V\rangle^{\ot}\ot \langle W\rangle^{\ot}\subset \msc{P} & \text{(by our centralizing hypotheses)}\\
& \Rightarrow\  V\in\msc{P}\text{ or }W\in \msc{P} &\text{(via primeness of }\msc{P}).
\end{array}
\]
So we deduce the claimed equality of supports.
\end{proof}

We record an observation from the proof, which is also implicit in the equality $\supp^{uni}(V\ot W)=\supp^{uni}(V)\cap\supp^{uni}(W)$ at $V$ or $W$ centralizing the simples.

\begin{lemma}\label{lem:859}
Consider $V$ and $W$ in $\stab(\msc{C})$, with one of $V$ or $W$ centralizing the simples, and let $\msc{P}$ be a prime in $\stab(\msc{C})$.  Then the product $V\ot W$ lies in $\msc{P}$ if and only if $V$ lies in $\msc{P}$ or $W$ lies in $\msc{P}$.
\end{lemma}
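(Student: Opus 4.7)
The plan is to treat this lemma as essentially a corollary of the proof of Lemma \ref{lem:1043}, specifically the key identity \eqref{eq:773}, combined with the definition of a prime ideal. The reverse implication is free: since $\msc{P}$ is a two-sided thick ideal in $\stab(\msc{C})$, the containment $V\in\msc{P}$ immediately forces $V\ot W\in\msc{P}$, and similarly with the roles of $V$ and $W$ exchanged. So the content is entirely in the forward direction.

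For the forward direction I would first assume, without loss of generality, that $V$ centralizes the simples, and suppose that $V\ot W$ lies in $\msc{P}$. I would then unpack the identity \eqref{eq:773}, which asserts
\[
\langle \langle V\rangle^{\ot}\ot\langle W\rangle^{\ot}\rangle^{\ot}=\langle \oplus_i (V\ot\lambda_i\ot W)\rangle^{\ot}.
\]
Using the centralizing structure on $V$, one has an isomorphism $V\ot\lambda_i\cong\lambda_i\ot V$ for each simple $\lambda_i$, so $\oplus_i V\ot\lambda_i\ot W\cong \oplus_i\lambda_i\ot V\ot W$. This latter object lies in the thick two-sided ideal generated by $V\ot W$, and conversely $V\ot W$ appears as a summand of $\oplus_i\lambda_i\ot V\ot W$ (since $\1$ is a simple summand of $\oplus_i\lambda_i$). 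Therefore $\langle \oplus_i V\ot\lambda_i\ot W\rangle^{\ot}=\langle V\ot W\rangle^{\ot}$, which combined with the displayed identity yields $\langle V\rangle^{\ot}\ot \langle W\rangle^{\ot}\subset \langle V\ot W\rangle^{\ot}\subset \msc{P}$.

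The conclusion is now immediate from primeness: since $\langle V\rangle^{\ot}\ot\langle W\rangle^{\ot}\subset \msc{P}$, either $\langle V\rangle^{\ot}\subset \msc{P}$ or $\langle W\rangle^{\ot}\subset \msc{P}$, i.e.\ $V\in\msc{P}$ or $W\in\msc{P}$. The case where $W$ centralizes the simples is handled in exactly the same way, using the centralizing isomorphism $\lambda_i\ot W\cong W\ot\lambda_i$. There is no serious obstacle here since the heavy identity \eqref{eq:773} has already been established in the proof of Lemma \ref{lem:1043}; the only point requiring a little care is the verification that $\langle \oplus_i V\ot\lambda_i\ot W\rangle^{\ot}$ really coincides with $\langle V\ot W\rangle^{\ot}$ under the centralizing hypothesis, and this follows simply by noting mutual containment via the centralizing isomorphism and the fact that $\1$ is a summand of $\oplus_i\lambda_i$.
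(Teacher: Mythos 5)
Your proposal is correct and follows essentially the same route as the paper, which records Lemma \ref{lem:859} precisely as an observation extracted from the proof of Lemma \ref{lem:1043}: there the identity \eqref{eq:773} together with the centralizing isomorphism $\oplus_i V\ot\lambda_i\ot W\cong\oplus_i\lambda_i\ot V\ot W$ gives $\langle V\rangle^{\ot}\ot\langle W\rangle^{\ot}\subset\langle V\ot W\rangle^{\ot}$, and primeness of $\msc{P}$ then yields the forward implication, the converse being immediate since $\msc{P}$ is a two-sided ideal. Your additional verification that $\langle \oplus_i V\ot\lambda_i\ot W\rangle^{\ot}=\langle V\ot W\rangle^{\ot}$ (via $\1$ being a simple summand of $\oplus_i\lambda_i$) matches the equality asserted in the paper's argument.
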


\begin{remark}
Although one can easily see this point directly, Lemma \ref{lem:859} tells us that thick prime ideals in the stable category $\stab(\msc{C})$ for \emph{braided} $\msc{C}$, defined as in Definition \ref{def:prime}, are primes in the sense of \cite{balmer05}.  So our prime ideal spectrum agrees with Balmer's spectrum \cite[Definition 2.1]{balmer05} in this case.
\end{remark}

The following lemma says that the support theory $(\cSpec(\msc{C}),\supp^{uni})$ is universal--in particular terminal--among multiplicative support theories for $\stab(\msc{C})$, provided ideals in $\stab(\msc{C})$ are all generated by objects which centralize the simples.

\begin{lemma}[{cf.\ \cite[Theorem 3.2]{balmer05}}]\label{lem:776}
Suppose that $(Y,\supp)$ is a multiplicative support theory for $\stab(\msc{C})$.  Suppose additionally that all ideals in $\stab(\msc{C})$ are generated by objects which centralize the simples.  Then there is a corresponding continuous map
\[
f_{\supp}:Y\to \cSpec(\msc{C}),\ \ y\mapsto \msc{P}_y:=\{W:y\notin \supp(W)\}.
\]
This map satisfies $f^{-1}_{\supp}(\supp^{uni}(V))=\supp(V)$, for all $V$ in $\stab(\msc{C})$.
\end{lemma}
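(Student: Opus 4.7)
The plan is to verify in turn that each $\msc{P}_y$ is a proper thick two-sided ideal, that it is prime, and finally that $f_\supp$ is continuous via the stated identity $f_\supp^{-1}(\supp^{uni}(V))=\supp(V)$. Thickness of $\msc{P}_y$ is immediate from the triangulated axioms for support (stability under shifts, direct sums, triangles, and summands), and the ideal property follows from the general inclusion $\supp(V\ot W)\subset\supp(V)\cap\supp(W)$ built into multiplicativity, which requires no centralizing hypothesis. Properness amounts to $y\in\supp(\1)$; applying multiplicativity with the unit (which trivially centralizes the simples) yields $\supp(V)=\supp(V)\cap\supp(\1)\subset\supp(\1)$ for every $V$, so the domain of $f_\supp$ may and should be taken to be $\supp(\1)$, and we may assume without loss that $\supp(\1)=Y$.

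The main step is verifying that $\msc{P}_y$ is prime. Suppose $\msc{I}\ot\msc{J}\subset\msc{P}_y$ with neither $\msc{I}$ nor $\msc{J}$ contained in $\msc{P}_y$. Choosing $V\in\msc{I}\setminus\msc{P}_y$ and $W\in\msc{J}\setminus\msc{P}_y$ gives $y\in\supp(V)\cap\supp(W)$. By hypothesis we may fix generating sets $\{V_i\}_i$ for $\msc{I}$ and $\{W_j\}_j$ for $\msc{J}$ consisting of objects that centralize the simples. I would first record the auxiliary fact that any object $X$ in $\langle V_i\rangle^{\ot}$ satisfies $\supp(X)\subset\cup_i\supp(V_i)$; this reduces, via the triangulated compatibilities of support, to the observation that $\supp(A\ot V_i\ot B)\subset\supp(V_i)$, which itself follows from applying multiplicativity twice and using that each $V_i$ centralizes the simples. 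Applying this fact to $V$ and $W$ produces indices $i_0,j_0$ with $y\in\supp(V_{i_0})$ and $y\in\supp(W_{j_0})$. Since $V_{i_0}$ centralizes the simples, multiplicativity then forces $\supp(V_{i_0}\ot W_{j_0})=\supp(V_{i_0})\cap\supp(W_{j_0})\ni y$, contradicting $V_{i_0}\ot W_{j_0}\in\msc{I}\ot\msc{J}\subset\msc{P}_y$.

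For continuity and the identity, note directly that $f_\supp^{-1}(\supp^{uni}(V))=\{y\in Y:V\notin\msc{P}_y\}=\{y\in Y:y\in\supp(V)\}=\supp(V)$, which is closed in $Y$ by hypothesis on the support theory. As the sets $\supp^{uni}(V)$ form a closed subbasis on $\cSpec(\msc{C})$, this simultaneously establishes the identity and continuity of $f_\supp$. The principal obstacle is the primeness of $\msc{P}_y$: one must carefully convert the product-of-ideals hypothesis into a single tensor product of well-behaved generators, and it is precisely here that the hypotheses on central generation and the equality clause of multiplicativity are essential. All other points are essentially formal consequences of the support axioms.
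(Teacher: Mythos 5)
Your proposal is correct and takes essentially the same route as the paper: primeness of $\msc{P}_y$ is extracted from centralizing generating sets of the two ideals together with the equality clause of multiplicativity, and the identity $f_{\supp}^{-1}(\supp^{uni}(V))=\supp(V)$ is computed directly to give continuity. Your explicit handling of properness via $\supp(V)\subset\supp(\1)$ (so that the domain is really $\supp(\1)$, which equals $Y$ in all of the paper's applications) is a point the paper's proof passes over silently, but it does not alter the argument.
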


\begin{proof}
We first prove that $\msc{P}_y$ is in fact prime.  Consider two ideals $\msc{I}$ and $\msc{J}$ in $\stab(\msc{C})$, and consider generators $\{V_i\}_i$ and $\{W_j\}_j$ for $\msc{I}$ and $\msc{J}$ respectively which centralize the simples.  Suppose that $\msc{I}\ot \msc{J}\subset \msc{P}_y$ and that $V_i\notin \msc{P}_y$ for some $i$.  Since $V_i\ot W_j\in \msc{P}_y$ for all $j$ we have $y\notin \supp(V_i\ot W_j)$ while $y\in \supp(V_i)$.  The tensor product property $\supp(V_i\ot W_j)=\supp(V_i)\cap\supp(W_j)$ then forces $y\notin \supp(W_j)$ for all $j$.  It follows that $\msc{J}\subset \msc{P}_y$.  So we see that $\msc{P}_y$ is prime.
\par

For continuity, as well as the final claim $f^{-1}_{\supp}(\supp^{uni}(V))=\supp(V)$, we have directly
\[
\supp^{uni}(V)\cap f_{\rm supp}(Y)=\{\msc{P}_y:V\notin\msc{P}_y\}=\{\msc{P}_y:y\in \supp(V)\},
\]
so that $\supp(V)=f^{-1}(\supp^{uni}(V))$.  One takes complements to find that the preimage of a basic open $f^{-1}_{\supp}(U_V)$ is the open complement $Y-\supp(V)$, and hence that $f_{\supp}$ is continuous.
\end{proof}

In the Chevalley case the hypotheses of Lemma \ref{lem:776} are met immediately, so that we have

\begin{corollary}\label{cor:chev_776}
Suppose that $\msc{C}$ is Chevalley.  Then for any multiplicative support theory $(Y,\supp)$ for $\stab(\msc{C})$, there is an continuous map
\[
f_{\supp}:Y\to \cSpec(\msc{C}),\ \ y\mapsto \msc{P}_y:=\{W:y\notin \supp(W)\}
\]
which satisfies $f^{-1}_{\supp}(\supp^{uni}(V))=\supp(V)$, for all $V$ in $\stab(\msc{C})$.
\end{corollary}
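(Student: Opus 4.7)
The approach is to reduce this directly to Lemma \ref{lem:776} by verifying that its centralization hypothesis is automatically satisfied in the Chevalley setting. In other words, the corollary should be a completely formal consequence of the preceding two lemmas, and the only thing to do is explain which lemma supplies which ingredient.

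First, I would invoke Lemma \ref{lem:1071}, which asserts that for any Chevalley finite tensor category $\msc{C}$, every thick ideal $\msc{I}$ in $\stab(\msc{C})$ is generated by objects that centralize the simples. Concretely, the proof of that lemma shows that any $V$ in $\stab(\msc{C})$ is a direct summand of $\sum_i \lambda_i\ot V\ot \lambda^i$, where $\{\lambda_i\}$ runs over a complete list of simples, and that this latter object carries a canonical $\msc{D}$-centralizing structure with respect to the fusion subcategory $\msc{D}\subset \msc{C}$ of semisimple objects. This exactly produces, from any generating family for $\msc{I}$, a generating family consisting of objects which centralize the simples.

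Second, I would feed the pair $(Y,\supp)$ together with this centralization conclusion into Lemma \ref{lem:776}. The hypotheses of Lemma \ref{lem:776} are precisely (i) that $(Y,\supp)$ is a multiplicative support theory, which is assumed, and (ii) that all thick ideals in $\stab(\msc{C})$ are generated by objects centralizing the simples, which is what Lemma \ref{lem:1071} just supplied. The conclusion of Lemma \ref{lem:776} then delivers, in one stroke, both the continuous map
\[
f_{\supp}:Y\to \cSpec(\msc{C}),\qquad y\mapsto \msc{P}_y=\{W\in \stab(\msc{C}):y\notin \supp(W)\},
\]
and the preimage identity $f_{\supp}^{-1}(\supp^{uni}(V))=\supp(V)$ for every $V$ in $\stab(\msc{C})$.

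There is no substantial obstacle to overcome: the proof is a one-line deduction. The Chevalley hypothesis enters only insofar as it unlocks Lemma \ref{lem:1071}, and Lemma \ref{lem:776} does all remaining work. If one wished to emphasize robustness, it would be worth remarking that the argument never actually uses the whole strength of Chevalley-ness beyond the consequence recorded in Lemma \ref{lem:1071}; any tensor category for which every thick ideal of $\stab(\msc{C})$ admits a generating set centralizing the simples would suffice.
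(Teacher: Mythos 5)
Your proposal is correct and is exactly the paper's argument: Lemma \ref{lem:1071} supplies that all thick ideals in $\stab(\msc{C})$ are generated by objects centralizing the simples in the Chevalley case, and Lemma \ref{lem:776} then yields the map $f_{\supp}$ and the identity $f_{\supp}^{-1}(\supp^{uni}(V))=\supp(V)$. Nothing further is needed.
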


\begin{proof}
In this case all ideals in $\stab(\msc{C})$ are generated by objects which centralize the simples, by Lemma \ref{lem:1071}.  So the claim follows by Lemmas \ref{lem:776}.
\end{proof}

\subsection{Calculating the spectrum}
\label{sect:balmer}

\begin{theorem}[{cf.\ \cite[Theorem 5.2]{balmer05}}]\label{thm:balmer}
Consider $\msc{C}$ a finite tensor category, and suppose that all thick ideals in $\stab(\msc{C})$ are generated by objects which centralize the simples.  If $(Y,\supp)$ is an lavish support theory, then the map $f_{\supp}$ of Lemma \ref{lem:776} is a homeomorphism,
\[
f_{\supp}:Y\overset{\cong}\longrightarrow \cSpec(\msc{C}).
\]
\end{theorem}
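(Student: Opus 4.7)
The plan is to verify that $f_{\supp}$, already known to be continuous by Lemma \ref{lem:776}, is a bijection and a closed map. The main tool will be the classification of thick ideals from Proposition \ref{prop:classify}, which under the stated hypotheses provides mutually inverse bijections $\msc{I}\leftrightarrow\supp(\msc{I})$ between thick ideals in $\stab(\msc{C})$ and specialization closed subsets of $Y$.

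First I would handle injectivity: if $\msc{P}_y=\msc{P}_{y'}$ then $y$ and $y'$ lie in exactly the same subsets of the form $\supp(V)$, and by exhaustiveness these exhaust the closed subsets of $Y$, so sobriety of the scheme $Y$ forces $y=y'$. The closed-map property will follow, given bijectivity, from the calculation $f_{\supp}(\supp(V))=\supp^{uni}(V)$ together with the fact that every closed subset of $Y$ equals some $\supp(V)$.

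Surjectivity will be the main step, and the one I expect to be hardest. Given a prime $\msc{P}$ I would set $\Theta=\supp(\msc{P})$, so that $\msc{P}=\msc{I}_\Theta$ by the classification, and consider the family $\mcl{F}$ of closed subsets $Z\subset Y$ with $Z\not\subset\Theta$. Because $\msc{P}$ is proper, $\mcl{F}$ is nonempty, and Noetherianness of $Y$ will furnish a minimal element $Z_0\in\mcl{F}$. The crucial subpoint is that $\mcl{F}$ is closed under intersection: multiplicativity gives the containment $\msc{I}_{Z_1}\otimes\msc{I}_{Z_2}\subset\msc{I}_{Z_1\cap Z_2}$, so if $Z_1\cap Z_2\subset\Theta$ then $\msc{I}_{Z_1}\otimes\msc{I}_{Z_2}\subset\msc{P}$, and primeness together with the classification forces one of the $Z_i$ into $\Theta$. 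Consequently $Z_0$ will be the unique minimum of $\mcl{F}$, and a further minimality argument (any proper decomposition $Z_0=A\cup B$ would force $A,B\subset\Theta$ and hence $Z_0\subset\Theta$) will show $Z_0$ is irreducible. Sobriety of $Y$ then yields a unique generic point $y$ of $Z_0$, and since $\Theta$ is specialization closed and $Z_0=\overline{\{y\}}\not\subset\Theta$, we will have $y\notin\Theta$. To finish I would verify $\msc{P}_y=\msc{P}$: if $\supp(W)\subset\Theta$ then $y\notin\supp(W)$ since $y\notin\Theta$, while if $\supp(W)\not\subset\Theta$ then $\supp(W)\in\mcl{F}$, so $Z_0\subset\supp(W)$ and $y\in\supp(W)$.

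The hard part will be the construction of the point $y$ in the surjectivity argument, and in particular the verification that $\mcl{F}$ is intersection-stable. This step is the precise place where primeness of $\msc{P}$ and the multiplicative hypothesis on $(Y,\supp)$ combine with the classification bijection to yield a topological statement about $Y$; once $Z_0$ is identified, sobriety of the scheme delivers the generic point essentially for free, and the closed-map property is then a routine consequence of bijectivity and exhaustiveness.
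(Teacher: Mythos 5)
Your proof is correct, and it follows the same overall blueprint as the paper's (which is itself Balmer's argument): continuity plus the formula $f_{\supp}^{-1}(\supp^{uni}(V))=\supp(V)$ from Lemma \ref{lem:776}, surjectivity via a Noetherian minimality argument among closed sets not contained in $\Theta=\supp(\msc{P})$, and the homeomorphism property from exhaustiveness. The genuine divergence is in how the surjectivity step handles the non-braided issue. The paper takes $\mcl{F}=\{\overline{x}:x\notin\Theta\}$ and, for $x,y\notin\Theta$, invokes Lemma \ref{lem:9000} to realize $\overline{x}$ and $\overline{y}$ as supports of objects $V,W$ which centralize the simples; primeness in the form of Lemma \ref{lem:859} together with the equality $\supp(V\ot W)=\supp(V)\cap\supp(W)$ then gives $\overline{x}\cap\overline{y}\nsubseteq\Theta$, and a minimal $\overline{x_0}$ directly furnishes the preimage point. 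You instead work with the ideals $\msc{I}_Z$ themselves: the containment $\msc{I}_{Z_1}\ot\msc{I}_{Z_2}\subset\msc{I}_{Z_1\cap Z_2}$ needs only the sub-multiplicative inclusion (valid for arbitrary objects), primeness is applied at the level of ideals as in its definition, and the classification of Proposition \ref{prop:classify} translates $\msc{I}_{Z_i}\subset\msc{P}$ back into $Z_i\subset\Theta$. What this buys is that the centralizing objects never reappear once Proposition \ref{prop:classify} is in hand---the central-generation hypothesis enters only through the classification---at the modest cost of an extra step: your minimal $Z_0$ must be shown irreducible and sobriety of the Noetherian scheme $Y$ invoked to extract its generic point, whereas the paper's family consists of point closures from the outset and needs no sobriety there. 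Your injectivity argument (exhaustiveness plus uniqueness of generic points) is likewise a bit more direct than the paper's, which routes through the sets $\Theta(y)$ and the classification; both are sound, as is your closed-map conclusion.
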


By Proposition \ref{prop:classify}, any lavish support theory is, in the language of \cite[Definition 5.1]{balmer05}, a classifying support data.  This classifying property is the technically relevant point.  For the proof of the theorem, one literally copies the proof of \cite[Theorem 5.2]{balmer05}, with only a slight modification for the surjectivity argument.  For the sake of completeness, we repeat Balmer's proof here.

\begin{proof}
We have already assumed that arbitrary closed subsets in $Y$ are realizable as supports of objects, and, by Lemma \ref{lem:9000}, central generation implies that all closed subsets are realizable as supports of objects which centralize the simples.
\par

For injectivity, consider for $y\in Y$ the specialization closed subset $\Theta(y):=\{x\in Y:y\notin \overline{x}\}$ in $Y$.  Note that this subset $\Theta(y)$ does not contain $y$, so that if an object $V$ in $\stab(\msc{C})$ has support contained in $\Theta(y)$ then $y\notin \supp(V)$.  Conversely, since $\supp(V)$ is closed in $Y$, and therefore contains the closures of all its points, we see that $y\notin \supp(V)$ then $\supp(V)\subset \Theta(y)$.  So we calculate the thick ideal defined by $\Theta(y)$ as
\[
\msc{I}_{\Theta(y)}=\{V:y\notin \supp(V)\}=\msc{P}_y=f_{\supp}(y).
\]
Now, since $(Y,\supp)$ admits a tensor extension to the big stable category, it also classifies thick ideals in $\stab(\msc{C})$ by Proposition \ref{prop:classify}.  Therefore
\[
f_{\supp}(x)=f_{\supp}(y) \ \Leftrightarrow \ \Theta(x)=\Theta(y)\ \Rightarrow\ x\in \overline{y}\ \text{and}\ y\in\overline{x}\ \Rightarrow\  \overline{x}=\overline{y}.
\]
Since $Y$ is $T_0$, this implies $x=y$.  So $f_{\supp}$ is injective.
\par

For surjectivity, consider a prime $\msc{P}$ in $\stab(\msc{C})$ and its associated specialization closed subset $\Theta=\supp(\msc{P})$ in $Y$.  Note that, since $\msc{P}$ is a proper ideal in $\stab(\msc{C})$, $\Theta$ is a proper subset in $Y$.  We claim that the associated collection of closed subsets in $Y$,
\[
\mcl{F}=\{\overline{x}:x\notin \Theta\},
\]
is nonempty and admits a minimal element (under inclusion).
\par

Suppose that such $x_0$ with minimal closure $\overline{x_0}\in \mcl{F}$ exists.  Then $x_0$ provides an identification $Y-\Theta=\{x:x_0\in \overline{x}\}$.  Indeed, minimality implies immediately the containment $Y-\Theta\subset \{x:x_0\in \overline{x}\}$ and the opposite containment holds as $x$ itself is not in $\Theta$, and $\Theta$ is specialization closed.  But this set $\{x:x_0\in \overline{x}\}$ is just $\Theta(x_0)$ from above.  Therefore
\[
\msc{P}=\msc{I}_\Theta=\msc{I}_{\Theta(x_0)}=f(x_0),
\]
and we observe surjectivity of our map.
\par

So, in order to prove surjectivity, it suffices to show that $\mcl{F}$ does in fact admit a minimal element.  Consider points in the complement $x,y\in Y-\Theta$, and objects $V$ and $W$ which centralize the simples and have supports $\supp(V)=\overline{x}$, $\supp(W)=\overline{y}$.  Since these objects are not supported in $\Theta$, we have that neither $V$ nor $W$ lies in $\msc{P}$, and hence their product is not in $\msc{P}$.  It follows that
\[
\overline{x}\cap\overline{y}=\supp(V\ot W)\nsubseteq \Theta.
\]
So there exists an element $z$ in this intersection, which also lies in the complement $Y-\Theta$.  Rather, any two subsets $\overline{x}$ and $\overline{y}$ in $\mcl{F}$ admit a third subset $\overline{z}$ in $\mcl{F}$ with $\overline{z}$ contained in both $\overline{x}$ and $\overline{y}$.  Since $Y$ is Noetherian, this implies that $\mcl{F}$ admits a minimal element, as desired.  We therefore have surjectivity of $f_{\supp}$, and conclude that $f_{\supp}$ is bijective.
\par

Finally, to see that $f_{\supp}$ is a homeomorphism, note that the basic closed sets $\supp^{uni}(V)$ in $\cSpec(\msc{C})$ are identified with the closed sets $\supp(V)$ in $Y$ under $f_{\supp}$.  Since the $\supp(V)$ exhaust all closed subsets in $Y$, we see that the topology on $Y$ is also generated by the $\supp(V)$, and hence the topologies on $Y$ and $\cSpec(\msc{C})$ agree.
\end{proof}

One now applies Lemma \ref{lem:1071} in the Chevalley case to find

\begin{theorem}\label{thm:chevalley_balmer}
Consider $\msc{C}$ a finite, Chevalley tensor category.  Then for any lavish support theory $(Y,\supp)$, the map $f_{\supp}$ of Corollary \ref{cor:chev_776} is a homeomorphism,
\[
f_{\supp}:Y\overset{\cong}\longrightarrow \cSpec(\msc{C}).
\]
\end{theorem}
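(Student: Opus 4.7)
The plan is to deduce this theorem directly from Theorem \ref{thm:balmer} by verifying that its centralization hypothesis is automatic in the Chevalley setting. That hypothesis asked that every thick ideal in $\stab(\msc{C})$ be generated by objects which centralize the simples, and this is exactly the content of Lemma \ref{lem:1071}.

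More concretely, I would first recall that, since $\msc{C}$ is Chevalley, Lemma \ref{lem:1071} guarantees that each thick ideal $\msc{I}\subset\stab(\msc{C})$ admits a generating set whose members lift to the relative Drinfeld centralizer $Z^{\msc{D}}(\msc{C})$, where $\msc{D}\subset\msc{C}$ is the tensor subcategory of semisimples. With the centralization hypothesis in place, Corollary \ref{cor:chev_776} (which is just the Chevalley specialization of Lemma \ref{lem:776}) produces the comparison map
\[
f_{\supp}\colon Y\to \cSpec(\msc{C}),\qquad y\mapsto \{W:y\notin\supp(W)\},
\]
and this map satisfies $f^{-1}_{\supp}(\supp^{uni}(V))=\supp(V)$ for every $V$ in $\stab(\msc{C})$.

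Next, since $(Y,\supp)$ is lavish by assumption, the hypotheses of Theorem \ref{thm:balmer} are met: the support theory is exhaustive and multiplicative, admits a faithful tensor extension to $\Stab(\msc{C})$, and all thick ideals are centrally generated (in the weaker sense of centralizing the simples) by Lemma \ref{lem:1071}. Theorem \ref{thm:balmer} then asserts precisely that $f_{\supp}$ is a homeomorphism, which is the desired conclusion.

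There is essentially no obstacle to overcome, since the work has been front-loaded into the two ingredients: Lemma \ref{lem:1071} verifies the central generation hypothesis categorically, and Theorem \ref{thm:balmer} packages the Balmer-style bijection and topological identification argument. Accordingly, the proof collapses to a one-line citation: apply Lemma \ref{lem:1071} to invoke Theorem \ref{thm:balmer}.
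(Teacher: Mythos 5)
Your proposal is correct and matches the paper's own argument: the theorem is obtained by invoking Lemma \ref{lem:1071} to verify the centralization hypothesis of Theorem \ref{thm:balmer} (via Corollary \ref{cor:chev_776} for the existence of $f_{\supp}$), exactly as in the text.
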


\section{Hypersurface support as a tensor extension for (\hyperref[it:F1]{F1})--(\hyperref[it:F3]{F3})}
\label{sect:tensor}

We provide a detailed discussion of hypersurface support for Hopf algebras which are similar to our examples (\hyperref[it:F1]{F1})--(\hyperref[it:F4]{F4}).  Specifically, we discuss Hopf algebras with the Chevalley property, and local integrable Hopf algebras.
\par

The findings of the present section are employed in Section \ref{sect:ideals_q} in order to classify thick ideals in the stable categories of the families (\hyperref[it:F1]{F1})--(\hyperref[it:F3]{F3}).  We provide an independent discussion of categories of sheaves on non-connected group schemes in Sections \ref{sect:hyper_G} and \ref{sect:ideals_G}.

\subsection{The thick subcategory Lemma for hypersurfaces}
\label{sect:tsl}

Consider integrable $\msf{u}$ with either conormal or local integration $\msf{U}\to \msf{u}$.  We say a hypersurface algebra $\msf{U}_K/(f)$ for $\msf{U}$ satisfies the \emph{thick subcategory lemma} if for each finitely generated module $N$ over $\msf{U}_K/(f)$ the following implication holds:
\[
\begin{array}{l}
N\text{ is non-perfect over }U_K/(f)\ \ ({\rm i.e.\ } \operatorname{projdim}_{U_K/(f)}(N)=\infty)\vspace{1mm}\\
\hspace{2cm}\Rightarrow\ K\in \langle \lambda\ot N\ot \mu:\lambda,\mu\in \operatorname{Irrep}(\msf{u}_K)\rangle\ \subset\  D_{coh}(\msf{U}_K/(f)).
\end{array}
\]
Here $f\in m_{Z_K}$ is any element with nonzero reduction $\bar{f}\in (m_Z/m_Z^2)_K$, and $D_{coh}(\msf{U}_K/(f))$ denotes the derived category of complexes with finitely generated cohomology.  Note that one needs to consider a conormal, or local, integration here so that the simples in $\rep(\msf{u})$ act both on the left and the right of the hypersurface category $\msf{U}_K/(f)$-mod.  The following was essentially proved in \cite{negronpevtsova}.

\begin{proposition}\label{prop:351}
For $\msf{u}$ of the types {\rm(\hyperref[it:F1]{F1})--(\hyperref[it:F4]{F4})}, all hypersurface algebras for the associated integration $\msf{U}\to \msf{u}$ satisfy the thick subcategory Lemma.
\end{proposition}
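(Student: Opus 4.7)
The plan is to reduce, for each of the families (\hyperref[it:F1]{F1})--(\hyperref[it:F4]{F4}), to a local version of the thick subcategory lemma for noncommutative complete intersections which is essentially already present in \cite{negronpevtsova}. The uniform mechanism is that in each case the integration decomposes as $\msf{U} = \msf{U}^{\mrm{loc}} \rtimes \Lambda$ with $\mcl{Z} \subset \msf{U}^{\mrm{loc}}$ central: for the Chevalley families (\hyperref[it:F1]{F1})--(\hyperref[it:F3]{F3}) this is the bosonization $\msf{U} = \msf{U}^+ \rtimes \Lambda$ of Definition \ref{eq:337} (with $\mcl{Z} \subset \msf{U}^+$ central by (b1)), while for (\hyperref[it:F4]{F4}) we have $\Lambda = k$ and $\msf{U}^{\mrm{loc}} = \msf{U}$ is already commutative local. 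In every case $\msf{U}^{\mrm{loc}}$ is local, Noetherian, and of finite global dimension, and for $f \in m_{Z_K}$ with nonzero reduction the hypersurface $\msf{U}^{\mrm{loc}}_K/(f)$ is a local algebra with residue field $K$.

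Given $N$ finitely generated over $\msf{U}_K/(f)$ with infinite projective dimension, I would first observe that restriction to $\msf{U}^{\mrm{loc}}_K/(f)$ preserves infinite projective dimension, since $\msf{U}_K/(f)$ is free over this subalgebra. The local version of Hopkins' lemma then forces the residue field $K$ into the thick subcategory of $D_{\mrm{coh}}(\msf{U}^{\mrm{loc}}_K/(f))$ generated by the restriction $N|_{\msf{U}^{\mrm{loc}}_K/(f)}$. Inducing back up along $\msf{U}^{\mrm{loc}}_K/(f) \hookrightarrow \msf{U}_K/(f)$ sends $K$ to $\Lambda_K$, which contains $K$ as a summand, so $K$ lies in the thick subcategory of $D_{\mrm{coh}}(\msf{U}_K/(f))$ generated by $\msf{U}_K/(f) \otimes_{\msf{U}^{\mrm{loc}}_K/(f)} N$. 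To recast this induced module in the required form, I would invoke the centralizing structure of Lemma \ref{lem:w/e} together with the explicit adjoint formula $FR(V) = \sum_i \lambda_i \otimes V \otimes \lambda^i$ derived in the proof of Lemma \ref{lem:1071}; this identifies the induction as a summand of $\bigoplus_i \lambda_i \otimes N \otimes \lambda^i$, placing $K$ in the thick subcategory generated by $\{\lambda \otimes N \otimes \mu : \lambda, \mu \in \mrm{Irrep}(\msf{u}_K)\}$ as required.

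The main obstacle is verifying the local Hopkins lemma in the necessary noncommutative generality, i.e.\ that for a local Noetherian algebra $R$ of finite global dimension and a regular central element $f$ in its maximal ideal, any finitely generated $R/(f)$-module of infinite projective dimension generates the residue field in $D_{\mrm{coh}}(R/(f))$. For (\hyperref[it:F4]{F4}) this is the classical commutative statement, and for (\hyperref[it:F1]{F1}) and (\hyperref[it:F2]{F2}) the relevant local algebras $\msf{a}^+_q$ and $U^{DK}_q(\mfk{n})$ are treated directly in \cite{negronpevtsova}. Case (\hyperref[it:F3]{F3}) requires the most care, as the integration $(\O(B) \rtimes U(\mfk{n})) \rtimes kT_{(1)}$ is not itself local; here I would use that the ``positive'' factor $\O(B)_{\widehat{1}} \rtimes U(\mfk{n})$ (formally completed at the identity) plays the role of $\msf{U}^{\mrm{loc}}$, with $\Lambda = kT_{(1)}$ being the semisimple piece that we smash over. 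The only remaining bookkeeping is that all constructions are stable under the base change $k \to K$, which is immediate since we work throughout with the completed integration $\msf{U}_K$.
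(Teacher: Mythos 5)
Your overall architecture is the same as the paper's: split $\msf{U}=\msf{U}^+\rtimes\Lambda$ with $\msf{U}^+$ local (for (\hyperref[it:F1]{F1})--(\hyperref[it:F3]{F3})), restrict the hypersurface module to the local piece, apply a local thick-subcategory statement there, then (co)induce back up and use semisimplicity of $\Lambda$ to extract $k$ as a summand. The gap is in the local statement you rely on. You assert an \emph{untwisted} local Hopkins lemma: that for the noncommutative local hypersurfaces $\msf{U}^+_K/(f)$ (completions of $\msf{a}_q^+$, $U_q^{DK}(\mfk{n})$, $\O(B)\rtimes U(\mfk{n})$), any finitely generated non-perfect module puts the residue field in the thick subcategory it generates by itself, and you claim this is ``treated directly'' in \cite{negronpevtsova} for (\hyperref[it:F1]{F1}) and (\hyperref[it:F2]{F2}). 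That is not what is proved there. What \cite{negronpevtsova} establishes (via the $q$-regularity of the augmentation ideal and Lemmas 12.4, 12.5, 13.1 of that paper) is the \emph{twisted} statement
\[
N\ \text{non-perfect over }\msf{U}/(f)\ \Rightarrow\ k\in \langle\, \lambda\ot F(N):\lambda\in \operatorname{Irrep}(\Lambda)\,\rangle\subset D_{coh}(\msf{U}^+/(f)),
\]
where $F$ is restriction. The twists are not cosmetic: $\Lambda$ acts nontrivially on $\msf{U}^+$, so $\lambda\ot F(N)$ carries a genuinely twisted $\msf{U}^+/(f)$-module structure and is in general not in the thick subcategory generated by $F(N)$ alone. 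Your single-module version is strictly stronger than the available result, is not known for these noncommutative local rings, and your proposal offers no argument for it (for (\hyperref[it:F3]{F3}) you identify the local factor but give no local statement at all); only case (\hyperref[it:F4]{F4}), being commutative, is covered by the classical result you cite.

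The repair is exactly the paper's route, and it is compatible with the rest of your outline: keep the twists, and instead of the left-adjoint induction use the right adjoint $R:\msf{U}^+/(f)\text{-mod}_{fg}\to \msf{U}/(f)\text{-mod}_{fg}$, which is an exact map of left $\rep(\Lambda)$-module categories with $R(F(N))\cong N\ot\Lambda^\ast\cong\oplus_{\mu}N\ot\mu$. Applying $R$ to the twisted statement gives $R(\lambda\ot F(N))\cong\oplus_\mu\lambda\ot N\ot\mu$, and since $k$ is a summand of $R(k)$ one lands directly in $\langle\lambda\ot N\ot\mu:\lambda,\mu\in\operatorname{Irrep}(\msf{u}_K)\rangle$, with no need for the centralizing structure of Lemma \ref{lem:w/e} or the formula $FR(V)=\sum_i\lambda_i\ot V\ot\lambda^i$, which the paper only uses later for the tensor product property.
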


\begin{proof}
By initially changing base we may assume $K=k$.  The case $\msf{u}=\O(\mcl{G})$, for connected $\mcl{G}$, is already known \cite{takahashi10}.  So we deal with the cases (\hyperref[it:F1]{F1})--(\hyperref[it:F3]{F3}).
\par

We argue as in the proof of \cite[Lemma 13.9]{negronpevtsova}.  For each of the given examples we have $\msf{U}=\msf{U}^+\rtimes \Lambda$, where $\Lambda=\msf{u}/\operatorname{Jac}(\msf{u})$ and $\msf{U}^+$ is local.  The local algebra $\msf{U}^+$ is the completion of $A_q^+$, $U^{DK}_q(\mfk{n})$, and $\O(B)\rtimes U(\mfk{n})$ for the cases of the quantum complete intersection, quantum Borel, and double of $B_{(1)}$ respectively.  It was shown in \cite{negronpevtsova} that for finitely generated modules over hypersurface algebras
\begin{equation}\label{eq:327}
N\text{ non-perfect over }\msf{U}/(f)\ \ \Rightarrow\ \ k\in \langle \lambda\ot F(N):\lambda\in \operatorname{Irrep}(\Lambda)\rangle\subset D_{coh}(\msf{U}^+/(f)),
\end{equation}
where $F:D_{coh}(\msf{U}/(f))\to D_{coh}(\msf{U}^+/(f))$ is restriction functor.  More specifically, we showed that the augmentation ideal in $\msf{U}^+$ is generated by a $q$-regular sequence \cite[Definition 12.1]{negronpevtsova}, so that the implication follows by \cite[Lemmas 12.4, 12.5, \& 13.1]{negronpevtsova}.
\par

The right adjoint $R:\msf{U}^+/(f)\text{-mod}_{fg}\to \msf{U}/(f)\text{-mod}_{fg}$ is an exact map of left $\rep(\Lambda)$-module categories \cite[\S 3.3]{etingofostrik04}, and on modules $N$ over $\msf{U}/(f)$ we have
\[
R(F(N))\cong N\ot\Lambda^\ast\cong \oplus_{\mu\in \operatorname{Irrep}(\msf{u})}N\ot\mu.
\]
So this functor derives immediately to provide a map
\[
R:D_{coh}(\msf{U}^+/(f))\to D_{coh}(\msf{U}/(f))
\]
of triangulated module categories.  We therefore apply $R$ to the formula \eqref{eq:327}, and note that $k$ is a summand of $R(k)$, to find
\[
N\text{ non-perfect over }\msf{U}/(f)\ \Rightarrow\ k\in \langle \lambda\ot N\ot \mu:\lambda,\mu\in \operatorname{Irrep}(\Lambda)\rangle\ \subset D_{coh}(\msf{U}/(f)).
\]
\end{proof}

\subsection{Sub-multiplicativity of hypersurface support}

\begin{lemma}\label{lem:weak}
Consider an integrable Hopf algebra $\msf{u}$ with a given integration $\msf{U}\to \msf{u}$ which is either conormal or local.  Then for arbitrary $M$ and $N$ in $\Stab(\msf{u})$ there is an inclusion
\[
\supp^{hyp}_\mbb{P}(M\ot N)\subset \big(\supp^{hyp}_\mbb{P}(M)\cap\supp^{hyp}_\mbb{P}(N)\big).
\]
\end{lemma}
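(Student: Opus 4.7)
The plan is to prove the contrapositive at each point: if $c\notin\supp^{hyp}_\mbb{P}(N)$, then $c\notin\supp^{hyp}_\mbb{P}(M\ot N)$, and symmetrically with the roles of $M$ and $N$ reversed. After base change one may assume the residue field is $k$; fix $f\in m_Z$ representing $c$, and set $\msf{U}_c=\msf{U}/(f)$. The condition $c\notin\supp^{hyp}_\mbb{P}(X)$ then reads as: $X$, viewed as an $\msf{U}_c$-module via the quotient $\msf{U}_c\twoheadrightarrow\msf{u}$, has finite projective dimension.

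The key ingredient is that $\msf{U}_c\text{-Mod}$ carries an $\rep(\msf{u})$-bimodule category structure, induced by the coideal condition on $Z$. Concretely, for $V$ in $\rep(\msf{u})$ (pulled back to an $\msf{U}$-module) and $X$ an $\msf{U}_c$-module, one makes $V\ot X$ into an $\msf{U}_c$-module by $u\cdot(v\ot x)=\sum \overline{u_{(1)}}v\ot u_{(2)}x$; the action of $f$ vanishes since $\Delta(f)\in Z\ot\msf{U}$ and $Z$ acts on $V$ through $\epsilon$, while $(\epsilon\ot id)\Delta(f)=f$ kills $X$. The analogous right-hand action, giving $X\ot V$ as an $\msf{U}_c$-module, arises from a left-coideal structure on $Z$; in the conormal case this comes for free from $Z$ being a Hopf subalgebra, whereas in the local case the right action must be constructed directly from the local structure on $\msf{U}$.

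The second ingredient is the standard Hopf-theoretic trick, using the antipode of $\msf{U}$: there is an isomorphism of $\msf{U}_c$-modules $\msf{U}_c\ot V\cong \msf{U}_c\ot V^{\rm tr}$, $x\ot v\mapsto\sum x_{(1)}\ot\overline{S(x_{(2)})}v$, where $V^{\rm tr}$ denotes $V$ with the trivial $\msf{U}_c$-action and the target is simply a free $\msf{U}_c$-module (of possibly infinite rank when $V$ is infinite dimensional). Hence $V\ot P$ is projective whenever $P$ is, and by exactness of $V\ot_k(-)$ tensoring a finite projective resolution of $N$ over $\msf{U}_c$ by $V=M$ produces a finite projective resolution of $V\ot N$. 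A direct check, using compatibility of $\Delta$ with $\msf{U}\twoheadrightarrow\msf{u}$, identifies this $\msf{U}_c$-structure with the one obtained by pulling back the diagonal $\msf{u}$-action on $M\ot N$, giving $c\notin\supp^{hyp}_\mbb{P}(M\ot N)$ as required.

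Running the symmetric argument with $M$ perfect over $\msf{U}_c$ and using the right-hand $\rep(\msf{u})$-action on $\msf{U}_c\text{-Mod}$ yields the inclusion $\supp^{hyp}_\mbb{P}(M\ot N)\subset\supp^{hyp}_\mbb{P}(M)$, completing the proof. The main obstacle, as flagged above, is producing the right-hand bimodule action on $\msf{U}_c\text{-Mod}$ in the purely local setting, where the conormality of $Z$ is unavailable and one must extract sufficient symmetry from the local structure on $\msf{U}$ alone.
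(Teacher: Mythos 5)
Your conormal-case argument is sound and is in substance the paper's: the paper phrases the preservation of finite projective dimension by $N\ot-$ and $-\ot N$ on $D^b(\msf{U}_c)$ via the exact right adjoints $\Hom_k(N,-)$ (using the antipode and its inverse), which is equivalent to your explicit tensor-identity computation showing that tensoring with a projective $\msf{U}_c$-module stays projective, so that tensoring a finite projective resolution does the job.

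The genuine gap is the local case, and it is exactly the point you flag at the end. Your plan for the inclusion $\supp^{hyp}_\mbb{P}(M\ot N)\subset\supp^{hyp}_\mbb{P}(M)$ needs a \emph{right} action of $\Rep(\msf{u})$ on $\msf{U}_c$-modules, i.e.\ an $\msf{U}_c$-module structure on $X\ot V$; when $Z$ is only a right coideal this structure is simply not available (the paper makes this explicit in the proof of Theorem \ref{thm:tpp_loc}: for a non-conormal integration only the left action $M\ot-$ on $D^b(\msf{U}_c)$ is well defined), and no amount of ``extracting symmetry from the local structure'' is indicated that would produce it. The paper's proof of the local case avoids module-category actions entirely and instead exploits locality of $\msf{u}$: every $\msf{u}$-module is a filtered union of finite-dimensional modules, each admitting a filtration with trivial subquotients, so $M\ot N$ (with its pulled-back $\msf{U}_c$-structure) lies in the localizing subcategory of $\Sing(\msf{U}_c)$ generated by $N$ \emph{and} in the one generated by $M$. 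Hence if either $M$ or $N$ is perfect over $\msf{U}_c$, i.e.\ vanishes in $\Sing(\msf{U}_c)$, the corresponding localizing subcategory is zero and $M\ot N$ is perfect, giving both inclusions at once. Replacing your right-action step by this filtration/localizing-subcategory argument (or some equivalent use of locality) is what is needed to close the proof; as written, the second inclusion in the local case remains unproved.
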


\begin{proof}
Consider $c$ any point in $\mbb{P}(m_Z/m_Z^2)$, which we may assume is closed by base change.  In the conormal case this just follows from the fact that we have an exact right adjoint $\Hom_k(N,-)$ to the action functors $N\ot-,\ -\ot N:D^b(\msf{U}_c)\to D^b(\msf{U}_c)$, at arbitrary $N$ in $\Rep(\msf{u})$.  (Rather, we have two adjoints corresponding to the actions on $\Hom_k(N,-)$ provided by the antipode and its inverse.)  Hence $N\ot-$ and $-\ot N$ preserve objects of finite projective dimension.  In the local case this follows from the fact that $M\ot N$ is in the localizing subcategory generated by $M$, and also in the localizing subcategory generated by $N$, in $\Sing(\msf{U}_c)$.
\end{proof}

We recall, from Section \ref{sect:hyp}, that a given integration $\msf{U}\to \msf{u}$ specifies a map $\kappa:\msf{Y}\to \mbb{P}(m_Z/m_Z^2)$ from the projective spectrum $\msf{Y}$ of the extensions $\Ext^\ast_\msf{u}(k,k)$.  Applying the above lemma in the case $V=\1$, and Proposition \ref{prop:538}, gives

\begin{corollary}\label{cor:contain}
Suppose that $\msf{u}$ admits an integration which is either conormal or local.  Then for any $M$ in $\Rep(\msf{u})$, the support $\supp^{hyp}_\mbb{P}(M)$ is contained in the closed subvariety $\kappa(\msf{Y})$ in $\mbb{P}(m_Z/m_Z^2)$.  That is to say, hypersurface support is a support theory valued in $\kappa(\msf{Y})$.
\end{corollary}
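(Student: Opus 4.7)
The plan is to derive the corollary directly from the sub-multiplicativity established in Lemma \ref{lem:weak} together with the finite-dimensional computation of hypersurface support from Proposition \ref{prop:538}. The key observation is that the unit object $\1 = k$ is finite-dimensional, so results stated for $\rep(\msf{u})$ are available whenever $k$ plays the role of one of the tensor factors, even if the other factor is an arbitrary object of $\Rep(\msf{u})$.

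First I would apply Lemma \ref{lem:weak} with the second argument taken to be $k$. Since $M \ot k \cong M$ canonically in $\Rep(\msf{u})$, this gives
\[
\supp^{hyp}_\mbb{P}(M) \;=\; \supp^{hyp}_\mbb{P}(M \ot k) \;\subset\; \supp^{hyp}_\mbb{P}(M) \cap \supp^{hyp}_\mbb{P}(k),
\]
and in particular $\supp^{hyp}_\mbb{P}(M) \subset \supp^{hyp}_\mbb{P}(k)$. The same argument works on the other side by considering $k \ot M \cong M$, which is relevant in the case where the integration supplies only the right-hand version of sub-multiplicativity.

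Next, since $k$ is finite-dimensional, Proposition \ref{prop:538} applies and yields
\[
\supp^{hyp}_\mbb{P}(k) \;=\; \kappa\bigl(\Supp_{\msf{Y}} \Ext^\ast_\msf{u}(k,k)^\sim\bigr) \;\subset\; \kappa(\msf{Y}).
\]
(In fact $\Ext^\ast_\msf{u}(k,k)^\sim$ is the structure sheaf of $\msf{Y}$, so the containment is an equality and $\supp^{hyp}_\mbb{P}(k) = \kappa(\msf{Y})$ exactly.) Chaining the two displays gives $\supp^{hyp}_\mbb{P}(M) \subset \kappa(\msf{Y})$, which is the claimed containment. That $\kappa(\msf{Y})$ is closed in $\mbb{P}(m_Z/m_Z^2)$ was already noted in Section \ref{sect:hyp}, as $\kappa$ is the map of projective schemes dual to the finite graded map $A_Z \to \Ext^\ast_\msf{u}(k,k)$.

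There is no real obstacle here beyond invoking the already-proved ingredients in the right order; the argument is essentially a one-line reduction. The only subtle point worth flagging is the conormal-or-local hypothesis on the integration, which is what makes Lemma \ref{lem:weak} available: it ensures that the relevant tensor action of $\Rep(\msf{u})$ on the hypersurface module categories $\msf{U}_c\text{-mod}$ is defined on at least one side, so that tensoring with $k$ (or with any $N$) preserves the relevant class of perfect, resp.\ projective-dimension, objects used in the definition of $\supp^{hyp}_\mbb{P}$.
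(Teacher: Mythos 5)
Your proposal is correct and is essentially the paper's own proof: the paper likewise notes that Proposition \ref{prop:538} gives $\supp^{hyp}_\mbb{P}(\1)=\kappa(\msf{Y})$ and then applies Lemma \ref{lem:weak} with $V=\1$ to obtain the containment. The only cosmetic difference is that you frame the step as a reduction to the finite-dimensional case, whereas Lemma \ref{lem:weak} is already stated for arbitrary $M$ and $N$ in $\Stab(\msf{u})$, so no such care is actually needed.
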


\begin{proof}
Proposition \ref{prop:538} says that $\supp^{hyp}_\mbb{P}(\1)=\kappa(\msf{Y})$, so that Lemma \ref{lem:weak} implies the claimed containment.
\end{proof}

In the case in which $\kappa:\msf{Y}\to\mbb{P}$ is a closed embedding, we therefore find that hypersurface support provides a \emph{triangular} extension of cohomological support from $\stab(\msf{u})$ to $\Stab(\msf{u})$.  One can show further that, in this case, all sub\emph{sets} of $\msf{Y}$ are realizable as the support of a module over $\msf{u}$.

\subsection{Hypersurface support as a tensor extension for (\hyperref[it:F1]{F1})--(\hyperref[it:F4]{F4})}
\label{sect:tpp}

\begin{theorem}\label{thm:tpp}
Consider integrable $\msf{u}$ with conormal integration $\msf{U}\to \msf{u}$.  Suppose that all hypersurfaces for $\msf{U}$ satisfy the thick subcategory lemma (see \ref{sect:tsl}).  Then hypersurface support for $\Stab(\msf{u})$ satisfies the tensor product property
\begin{equation}\label{eq:603}
\begin{array}{l}
\supp^{hyp}_\mbb{P}(V\ot M)=\supp^{hyp}_\mbb{P}(V)\cap\supp^{hyp}_\mbb{P}(M)\vspace{2mm}\\
\hspace{2cm}\text{and}\ \ \supp^{hyp}_\mbb{P}(M\ot V)=\supp^{hyp}_\mbb{P}(M)\cap\supp^{hyp}_\mbb{P}(V),
\end{array}
\end{equation}
whenever $V$ is finite-dimensional and centralizes the simples.
\par

When $\msf{u}$ is in particular geometrically Chevalley, and all hypersurfaces for a given Chevalley integration satisfy the thick subcategory lemma, then the equalities \eqref{eq:603} hold for arbitrary $M$ and arbitrary finite-dimensional $V$.
\end{theorem}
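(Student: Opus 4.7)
Since Lemma \ref{lem:weak} already delivers the containment $\supp^{hyp}_\mbb{P}(V\ot M)\subset\supp^{hyp}_\mbb{P}(V)\cap\supp^{hyp}_\mbb{P}(M)$, the task is to establish the reverse inclusion. The plan is to first settle a closed point $c$ of $\supp^{hyp}_\mbb{P}(V)\cap\supp^{hyp}_\mbb{P}(M)$ in the case that $V$ and $M$ are both finite-dimensional by a direct appeal to the thick subcategory lemma, and then upgrade to the geometrically Chevalley case with arbitrary $M\in\Stab(\msf{u})$ by a filtered colimit argument.

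After base change one may assume the residue field at $c$ is $k$ and that $\msf{U}_c=\msf{U}/(f)$; by assumption both $V$ and $M$ are non-perfect over $\msf{U}_c$. The thick subcategory lemma applied to $M$ yields
\[
k\in\langle\lambda\ot M\ot\mu:\lambda,\mu\in\operatorname{Irrep}(\msf{u})\rangle\subset D_{coh}(\msf{U}_c).
\]
Tensoring on the left by $V$, which defines an exact triangulated endofunctor of $D_{coh}(\msf{U}_c)$ (here conormality ensures the $\msf{U}_c$-action on $V\ot(-)$ is well-defined and triangulated), gives $V=V\ot k\in\langle V\ot\lambda\ot M\ot\mu\rangle$. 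The centralizing structure on $V$ then supplies isomorphisms $\gamma_{V,\lambda}:V\ot\lambda\overset{\cong}\to\lambda\ot V$ in $\rep(\msf{u})$, which pull back to $\msf{U}_c$-module isomorphisms along the surjection $\msf{U}_c\to\msf{u}$ and rearrange the above to
\[
V\in\langle\lambda\ot V\ot M\ot\mu:\lambda,\mu\rangle\subset D_{coh}(\msf{U}_c).
\]
If $V\ot M$ were perfect over $\msf{U}_c$, then every $\lambda\ot V\ot M\ot\mu$ would be perfect as well, since tensoring by a finite-dimensional $\msf{u}$-module preserves perfects on $\msf{U}_c$-mod via the tensor identity $W\ot\msf{U}\cong W^{triv}\ot\msf{U}$ (descended past $(f)$ using centrality of $\mcl{Z}$). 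Thickness would then force $V$ to be perfect, contradicting $c\in\supp^{hyp}_\mbb{P}(V)$. Thus $c\in\supp^{hyp}_\mbb{P}(V\ot M)$, and the mirror argument (tensoring by $V$ on the right) gives the analogous statement for $M\ot V$.

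For the Chevalley assertion, Lemma \ref{lem:w/e} provides a centralizing structure on every $V\in\rep(\msf{u})$, so the argument above already handles all finite-dimensional $V$ and finite-dimensional $M$; to pass to arbitrary $M\in\Stab(\msf{u})$, I would write $M=\varinjlim_\alpha M_\alpha$ as a filtered colimit of finite-dimensional submodules and invoke the sum-compatibility of the triangular extension together with commutation of $V\ot(-)$ with filtered colimits to reduce to the finite case. I expect the main obstacle to be the careful bookkeeping of the centralizing isomorphisms at the level of $\msf{U}_c$-modules and the parallel verification that finite-dimensional tensoring preserves perfects on $\msf{U}_c$-mod; both hinge on conormality, which makes $f$ central in $\msf{U}$ and so lets the tensor identity and the two-sided $\rep(\msf{u})$-action on $D_{coh}(\msf{U}_c)$ descend past the hypersurface relation.
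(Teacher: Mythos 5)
Your finite-dimensional case is sound, but it does not prove the theorem, and the proposed upgrade to big $M$ has a genuine gap. Note first that even the first assertion of the theorem concerns arbitrary $M$ in $\Stab(\msf{u})$, not just finite-dimensional $M$. Your argument applies the thick subcategory lemma to $M$, which is only legitimate when $M$ restricts to a \emph{finitely generated} module over the hypersurface $\msf{U}_c$ -- that is the standing hypothesis in the lemma -- so your direct argument is confined to compact $M$ from the start. The filtered-colimit reduction you sketch does not close this gap: the triangular extension is only required to split over set-indexed \emph{direct sums}, and hypersurface support (infinitude of projective dimension over $\msf{U}_c$) simply does not commute with filtered colimits. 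Concretely, an infinite-dimensional projective $\msf{u}_K$-module $M$ has $\supp^{hyp}_\mbb{P}(M)=\emptyset$ yet is a filtered union of finite-dimensional submodules $M_\alpha$ which are typically non-projective, so $\supp^{hyp}_\mbb{P}(M)\neq\bigcup_\alpha\supp^{hyp}_\mbb{P}(M_\alpha)$; knowing the tensor product property for each $V\ot M_\alpha$ therefore tells you nothing about $\supp^{hyp}_\mbb{P}(V\ot M)$.

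The way around this, and the route the paper takes, is to apply the thick subcategory lemma to the \emph{finite-dimensional} factor $V$ rather than to $M$. Assuming $c\in\supp^{hyp}_\mbb{P}(V)$, the lemma gives $k\in\langle\lambda\ot V\ot\mu:\lambda,\mu\in I\rangle$ in $D_{coh}(\msf{U}_c)$, and the centralizing structure on $V$ collapses this to $k\in\langle\lambda\ot V:\lambda\in I\rangle$. Applying the exact endofunctor $-\ot M$ then yields $M\in\langle\lambda\ot V\ot M:\lambda\in I\rangle$; if $V\ot M$ were of finite projective dimension over $\msf{U}_c$, so would be each $\lambda\ot V\ot M$ (tensoring by a finite-dimensional module preserves finite projective dimension, as in Lemma \ref{lem:weak}), forcing $M$ to have finite projective dimension. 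This treats arbitrary $M$ in $\Stab(\msf{u})$ in one stroke, with no limiting procedure, because the only finiteness the thick subcategory lemma needs is carried by $V$. Your computation for finite-dimensional $M$ is essentially the mirror image of this (concluding perfection of $V$ instead of $M$), so the repair is a reorganization rather than a new idea; but as written the proposal does not establish the stated result.
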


\begin{proof}
By changing base we may take $K=k$.  Fix $I=\operatorname{Irrep}(\Lambda)$.  By Lemma \ref{lem:weak} there is an inclusion $\supp^{hyp}_\mbb{P}(V\ot M)\subset \supp^{hyp}_\mbb{P}(V)\cap\supp^{hyp}_\mbb{P}(M)$.  So we need only provide the opposite inclusion.  Suppose, to this end, that $V\ot M$ is of finite projective dimension over $\msf{U}_c$ while $V$ is \emph{not} of finite projective dimension over $\msf{U}_c$.  Then
\[
k\in \langle \lambda\ot V\ot\mu:\lambda,\mu\in I\rangle\subset D_{coh}(\msf{U}_c),
\]
by the thick subcategory lemma.  The centralizing hypothesis simplifies the expression above to give $k\in \langle \lambda\ot V:\lambda\in I\rangle$.  Apply the endofunctor $-\ot M$ of $D^b(\msf{U}_c)$ to find
\[
M\in \langle \lambda\ot V\ot M:\lambda\in I\rangle\subset \langle \Proj(\msf{U}_c)\rangle,
\]
so that $M$ is seen to be of finite projective dimension.
\par

We have now shown that if a point $c$ is in the intersection $\supp^{hyp}_\mbb{P}(V)\cap \supp^{hyp}_\mbb{P}(M)$ then $c$ is also in $\supp^{hyp}_\mbb{P}(V\ot M)$.  This establishes the inclusion $\supp^{hyp}_\mbb{P}(V)\cap\supp^{hyp}_\mbb{P}(M)\subset \supp^{hyp}_\mbb{P}(V\ot M)$ and subsequent equality.  One argues similarly to obtain the desired formula for the support of the product $M\ot V$, in this case employing the equality $\langle \lambda\ot V\ot \mu:\lambda,\mu\in I\rangle=\langle V\ot \lambda:\lambda\in I\rangle$.  We note, finally, that in the geometrically Chevalley case, all objects centralize the simples by Lemma \ref{lem:w/e}, so that the identities \eqref{eq:603} hold globally.
\end{proof}

We have a local version of the above theorem, which exhibits a certain sidedness.

\begin{theorem}\label{thm:tpp_loc}
Consider local $\msf{u}$ with local integration $\msf{U}\to \msf{u}$.  Suppose that all hypersurfaces for $\msf{U}$ satisfy the thick subcategory lemma (see \ref{sect:tsl}).  Then hypersurface support for $\Stab(\msf{u})$ satisfies the tensor product property
\begin{equation}\label{eq:604}
\supp^{hyp}_\mbb{P}(M\ot V)=\supp^{hyp}_\mbb{P}(M)\cap\supp^{hyp}_\mbb{P}(V),
\end{equation}
where $M$ is arbitrary and $V$ is finite-dimensional.
\end{theorem}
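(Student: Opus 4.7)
The plan is to closely mimic the proof of Theorem \ref{thm:tpp}, using two simplifications afforded by locality: since $\msf{u}$ is local, the only irreducible $\msf{u}$-representation is the unit $k$, so both the ``external'' simples appearing in the statement of the thick subcategory lemma and any centralization hypothesis on $V$ collapse to something trivial. First I would reduce to $K=k$ by base change, and invoke Lemma~\ref{lem:weak} to obtain the containment
\[
\supp^{hyp}_\mbb{P}(M\ot V)\subset \big(\supp^{hyp}_\mbb{P}(M)\cap\supp^{hyp}_\mbb{P}(V)\big).
\]

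For the opposite inclusion, I would fix a point $c\in \mbb{P}(m_Z/m_Z^2)$ at which $V$ is non-perfect over the hypersurface algebra $\msf{U}_c$, and argue by contradiction that if $M\ot V$ is perfect over $\msf{U}_c$ then $M$ is already perfect. Because the only simple for $\msf{u}$ is $k$, the thick subcategory lemma reduces to the single containment
\[
k\ \in\ \langle V\rangle\ \subset\ D_{coh}(\msf{U}_c),
\]
without any $\lambda,\mu$ indexing.  The next step is to apply the triangulated endofunctor $M\ot-$ to this relation. Viewed in the appropriate (unbounded) derived category of $\msf{U}_c$-modules, this transports the containment to
\[
M\ \in\ \langle M\ot V\rangle,
\]
and hence, if $M\ot V$ is perfect over $\msf{U}_c$, so is $M$. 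This contradicts the assumption that $c\in\supp^{hyp}_\mbb{P}(M)$ and yields the reverse containment.

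The main point requiring care is to justify that $M\ot-$ is a well-defined triangulated endofunctor of the ambient derived category of $\msf{U}_c$-modules, carrying the thick subcategory generated by $V$ in $D_{coh}(\msf{U}_c)$ into the thick subcategory generated by $M\ot V$, even for \emph{infinite-dimensional} $M$. This is precisely the content of the local-case half of the proof of Lemma \ref{lem:weak}: the right coideal structure on $\mcl{Z}$ guarantees that tensoring with $M$ on the left is triangulated and preserves perfects, so that the thick containment transfers correctly. It is exactly this one-sidedness of the coideal structure that constrains the conclusion to $M\ot V$ (rather than also yielding the mirror identity for $V\ot M$), explaining the sidedness stated in the theorem and distinguishing it from the conormal version in Theorem~\ref{thm:tpp}.
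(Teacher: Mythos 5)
Your proposal is correct and takes essentially the same route as the paper, which proves the theorem by running the argument of Theorem \ref{thm:tpp} with the set of simples collapsed to $\{k\}$ (since $\msf{u}$ is local) and observing that only the left action $M\ot-$ on the hypersurface category is available in the non-conormal setting, which is exactly the sidedness you identify. One cosmetic quibble: the well-definedness of the triangulated endofunctor $M\ot-$ of the derived category of $\msf{U}_c$-modules comes from the right coideal hypothesis on $\mcl{Z}$ (stated after Definition \ref{def:integrable}) rather than from the local-case half of Lemma \ref{lem:weak}, and for the key step one only needs that $M\ot-$ is exact/triangulated with $M\ot k\cong M$, not that it preserves perfects.
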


\begin{proof}
The same as the proof of Theorem \ref{thm:tpp}, except we note that only the left action $M\ot-:D^b(\msf{U}_c)\to D^b(\msf{U}_c)$ is well-defined when $\msf{U}\to \msf{u}$ is not conormal.
\end{proof}

Applying Theorems \ref{thm:tpp} and \ref{thm:tpp_loc}, in conjunction with Proposition \ref{prop:351} and Proposition \ref{prop:538}, provides

\begin{corollary}\label{cor:examples}
For $\msf{u}$ among {\rm (\hyperref[it:F1]{F1})--(\hyperref[it:F3]{F4})}, hypersurface support $(\kappa(\msf{Y}),\supp^{hyp}_\mbb{P})$ provides a lavish support theory for the stable category $\stab(\msf{u})$.
\end{corollary}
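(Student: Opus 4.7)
The plan is to verify in turn the three defining conditions of Definition \ref{def:lavish} — exhaustiveness, multiplicativity, and the existence of a faithful tensor extension to $\Stab(\msf{u})$ — by assembling the inputs already constructed earlier in the section. The corollary is in essence a bookkeeping statement that packages Proposition \ref{prop:351}, Theorems \ref{thm:tpp}, \ref{thm:tpp_loc}, \ref{thm:detec}, and Proposition \ref{prop:538}; the genuinely nontrivial work (especially the thick subcategory lemma) has already been carried out.

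First I would address the tensor extension. Hypersurface support $\supp^{hyp}_\mbb{P}$ is defined on all of $\Stab(\msf{u})$ via the construction of Section \ref{sect:hyp}, with triangulated compatibility established in \cite{negronpevtsova2}; Proposition \ref{prop:538} and Corollary \ref{cor:contain} together confirm that this extended theory is in fact valued in the closed subvariety $\kappa(\msf{Y})\subset \mbb{P}(m_Z/m_Z^2)$. To verify the tensor compatibilities (a) and (b) of Definition \ref{def:text}, I would split the four families according to which integration they carry: the families (\hyperref[it:F1]{F1})--(\hyperref[it:F3]{F3}) admit conormal integrations (and, for (F2) and (F3), even geometrically Chevalley ones), so Theorem \ref{thm:tpp} applies, with its thick-subcategory-lemma hypothesis supplied by Proposition \ref{prop:351}. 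For (\hyperref[it:F4]{F4}), the integration $\O(\mcl{H})\to\O(\mcl{G})$ is local rather than conormal, so one invokes Theorem \ref{thm:tpp_loc} instead; once again Proposition \ref{prop:351} furnishes the thick subcategory lemma. Faithfulness of the extension is then exactly the detection theorem, Theorem \ref{thm:detec}.

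Next, multiplicativity on $\stab(\msf{u})$ (Definition \ref{def:reasonable}) is obtained by restricting the tensor extension to compact objects: the sub-multiplicative containment is the compact case of Lemma \ref{lem:weak}, while the equality $\supp^{hyp}_\mbb{P}(V\ot W)=\supp^{hyp}_\mbb{P}(V)\cap\supp^{hyp}_\mbb{P}(W)$ when one of $V,W$ centralizes the simples is immediate from Theorems \ref{thm:tpp} and \ref{thm:tpp_loc}. For exhaustiveness, I would invoke the identification of $\supp^{hyp}_\mbb{P}$ with the $\kappa$-push-forward of the cohomological support $\supp^{coh}_\msf{Y}$, provided by Proposition \ref{prop:538} together with \cite[Proposition 3.3(iii)]{berghplavnikwitherspoon}, and then appeal to the standard exhaustiveness of cohomological support for finitely generated cohomology rings from \cite{berghplavnikwitherspoon}; this realises every closed subset of $\kappa(\msf{Y})$ as the hypersurface support of some compact object.

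The only real point of friction is the need to correctly match each family with the right tensor-product theorem: conormal integrations give the two-sided equality of Theorem \ref{thm:tpp} for (\hyperref[it:F1]{F1})--(\hyperref[it:F3]{F3}), whereas the local integration in (\hyperref[it:F4]{F4}) only gives the one-sided equality of Theorem \ref{thm:tpp_loc}. Since Definition \ref{def:text} is precisely designed to accept either sidedness, the matching is clean and no further input is required. Thus the corollary assembles with no additional argument beyond checking that each family satisfies the conormal-or-local hypothesis and that Proposition \ref{prop:351} applies to its distinguished integration.
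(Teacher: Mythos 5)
Your proposal is correct and follows essentially the same route as the paper: exhaustiveness via Proposition \ref{prop:538} and exhaustiveness of cohomological support, and the faithful tensor extension via Theorems \ref{thm:tpp} and \ref{thm:tpp_loc} (with the thick subcategory lemma supplied by Proposition \ref{prop:351}) together with the detection theorem, Theorem \ref{thm:detec}. The only cosmetic difference is that the paper quotes multiplicativity on the small stable category directly from \cite{negronpevtsova}, whereas you rederive it by restricting the big-category tensor product theorems and Lemma \ref{lem:weak} to compact objects, which is equally valid.
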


\begin{proof}
In \cite{negronpevtsova} it was shown that hypersurface support for the small stable category, in these cases, is multiplicative.  The fact that hypersurface support is exhaustive follows from Proposition \ref{prop:538} and the fact that cohomological support is exhaustive.  Theorems \ref{thm:tpp} and \ref{thm:tpp}, in conjunction with \cite[Theorem 6.1]{negronpevtsova2}, imply that hypersurface support for the big stable category provides a faithful tensor extension of its compact restriction to all of $\Stab(\msf{u})$.
\end{proof}

By Corollary \ref{cor:examples} we find further that, when $\kappa:\msf{Y}\to \mbb{P}(m_Z/m_Z^2)$ is a closed embedding, hypersurface support for $\Stab(\msf{u})$ provides a faithful tensor extension of \emph{cohomological} support to the big stable category.

\begin{corollary}
Cohomological support is a lavish support theory for $\msf{u}$ a bosonized quantum complete intersection, or the algebra of functions on an infinitesimal (finite) group scheme.
\par

Similarly, cohomological support is a lavish support theory for $\msf{u}$ a small quantum Borel algebra in type $A$, at large order $q$, or the double $\mcl{D}(B_{(1)})$ in large characteristic.
\end{corollary}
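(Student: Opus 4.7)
The plan is to deduce this corollary directly from Corollary \ref{cor:examples} by transferring the lavish property along the finite map $\kappa: \msf{Y}\to \mbb{P}(m_Z/m_Z^2)$ of Section \ref{sect:hyp}, showing that for each listed family $\kappa$ is in fact a closed embedding (or homeomorphism onto its image after reduction). The central identity is the one recorded in Proposition \ref{prop:538}, namely $\supp^{hyp}_\mbb{P}(V) = \kappa(\supp^{coh}_\msf{Y}(V))$ for $V$ finite-dimensional, which immediately yields an analogous identity for all $M$ in $\Stab(\msf{u})$ once one uses the hypersurface extension of cohomological support described at the end of Section \ref{sect:tpp}. When $\kappa$ is a closed embedding this identity says that cohomological support and hypersurface support are literally the same theory, just valued in $\msf{Y}$ versus its image in $\mbb{P}$.

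The first step is to verify that $\kappa$ is a closed embedding in each case. For the bosonized quantum complete intersection $\msf{a}_q(P)$, the cohomology ring is generated in degree $2$ by the duals of the parameters $x_i^l$, with relations only coming from the $\Lambda$-structure, so the natural map $A_Z\to \Ext^\ast_{\msf{a}_q}(k,k)$ surjects onto the reduced cohomology. For $\O(\mcl{G})$ at infinitesimal $\mcl{G}$, surjectivity of $A_Z \twoheadrightarrow \Ext^*(k,k)_{\rm red}$ follows from the standard complete-intersection cohomology computation over the smooth enveloping group $\mcl{H}$. For the quantum Borel $u_q(B)$ in type $A$ at sufficiently large order $q$ (above the Coxeter number), and for $\mcl{D}(B_{(1)})$ in large characteristic, one invokes the classical cohomology computations of Ginzburg--Kumar and Friedlander--Parshall respectively to see that $\Ext^*(k,k)$ is a polynomial algebra whose reduction coincides with $A_Z$, forcing $\kappa$ to be an isomorphism.

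Given that $\kappa$ is a closed embedding, the second step is purely formal: one equips $\msf{Y}$ with cohomological support on $\stab(\msf{u})$, and extends it to $\Stab(\msf{u})$ by pulling back the hypersurface extension along the embedding $\kappa$, i.e.\ by setting $\supp^{coh}_\msf{Y}(M) := \kappa^{-1}(\supp^{hyp}_\mbb{P}(M))$ for arbitrary $M$. The identity $\supp^{hyp}_\mbb{P}(V) = \kappa(\supp^{coh}_\msf{Y}(V))$ for compact $V$ shows this extension restricts to the usual cohomological support on $\stab(\msf{u})$. Exhaustivity, faithfulness, multiplicativity on compacts, and the tensor extension properties (a),(b) of Definition \ref{def:text} then transfer verbatim from the hypersurface theory, since $\kappa^{-1}$ preserves intersections and sends empty to empty on the image. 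Hence $(\msf{Y}, \supp^{coh}_\msf{Y})$ is lavish.

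The only real obstacle is the cohomology computation establishing that $\kappa$ is a closed embedding in each listed family; everything downstream is formal. For (\hyperref[it:F1]{F1}) and the connected case of (\hyperref[it:F4]{F4}) the requisite surjectivity is essentially built into the integration data, so the work concentrates on (\hyperref[it:F2]{F2}) and (\hyperref[it:F3]{F3}), where the precise bounds ``large order $q$'' and ``large characteristic'' are exactly what is needed to invoke the classical statements that $\Ext^\ast_\msf{u}(k,k)$ is generated in degree $2$. Once those inputs are in hand, the conclusion is immediate from Corollary \ref{cor:examples}.
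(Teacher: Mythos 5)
Your proposal follows the paper's own route: invoke Corollary \ref{cor:examples} for lavishness of hypersurface support, observe that once $\kappa:\msf{Y}\to\mbb{P}(m_Z/m_Z^2)$ is (topologically) a closed embedding the hypersurface extension is literally a faithful tensor extension of cohomological support via Proposition \ref{prop:538}, and then verify the closed-embedding condition family by family from the known cohomology computations, exactly as the paper does by citation. The only slip is bibliographic rather than mathematical: the cohomology of the double $\mcl{D}(B_{(1)})$ in large characteristic is due to Friedlander--Negron \cite[Theorem 6.10]{friedlandernegron18}, not Friedlander--Parshall, and for (\hyperref[it:F1]{F1}) and (\hyperref[it:F4]{F4}) the paper relies on \cite{berghoppermann08}, \cite[\S 11.2, Lemmas 7.5, 10.4]{negronpevtsova} where you sketch the computation directly.
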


By large order $q$ we mean $\ord(q)>n+1$ in type $A_n$, and by large characteristic we mean $p>\dim B+1$.

\begin{proof}
In the cases listed above the map $\kappa$ was shown to be a closed embedding in \cite{negronpevtsova}.  See specifically \cite[Lemma 7.5]{negronpevtsova} and, for types (\hyperref[it:F1]{F1})--(\hyperref[it:F4]{F4}) respectively, \cite{berghoppermann08} \cite[\S 11.2]{negronpevtsova}, \cite{ginzburgkumar93}, \cite[Theorem 6.10]{friedlandernegron18}, and \cite[Lemma 10.4]{negronpevtsova}. 
\end{proof}

At Corollaries \ref{cor:948} and \ref{cor:949} below the restrictions on the order of $q$ for $u_q(B)$, and the characteristic of the base field for $\mcl{D}(B_{(1)})$, are shown to be superfluous.

\section{Thick ideals and spectra for $\stab$(\hyperref[it:F1]{F1})--$\stab$(\hyperref[it:F3]{F3})}
\label{sect:ideals_q}

We classify thick ideals, and discuss central generation of ideals, for the families (\hyperref[it:F1]{F1})--(\hyperref[it:F3]{F3}).  Throughout the section we use, implicitly, the relationship between cohomological and hypersurface support provided by Proposition \ref{prop:538}.  In particular, we take for granted that the cohomological and hypersurface supports for an integrable Hopf algebra agree whenever the map
\[
\kappa:\msf{Y}=\Proj\Ext_{\msf{u}}^\ast(k,k)_{\rm red}\to \mbb{P}(m_Z/m_Z^2)
\]
provided in Section \ref{sect:hyp} is injective, and hence topologically a closed embedding.

\subsection{Classification of ideals and the map $\kappa$}

Before we begin, let us give a general result which speaks to the nature of the map $\kappa$.

\begin{lemma}\label{lem:class_kap}
Suppose that a finite-dimensional Hopf algebra $\msf{u}$ is integrable, with some fixed integration, and suppose that the corresponding hypersurface support $(\kappa(\msf{Y}),\supp^{hyp}_\mbb{P})$ is multiplicative and classifies thick ideals in $\stab(\msf{u})$.  Suppose also that cohomological support for $\msf{u}$ satisfies
\[
\supp^{coh}_\msf{Y}(V\ot W)\subset \left(\supp^{coh}_\msf{Y}(V)\cap\supp^{coh}_\msf{Y}(W)\right),
\]
for all $V$ and $W$ in $\stab(\msf{u})$.  Then the map $\kappa:\msf{Y}\to\mbb{P}$ is topologically a closed embedding.
\end{lemma}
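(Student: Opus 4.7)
The plan is to verify that $\kappa$ is injective; combined with the continuity and closedness of $\kappa$ already established in Section~\ref{sect:hyp}, this will yield a homeomorphism of $\msf{Y}$ onto its closed image in $\mbb{P}$, which is precisely what ``topologically a closed embedding'' means.

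I would argue injectivity by contradiction: suppose $y_1 \neq y_2$ in $\msf{Y}$ satisfy $\kappa(y_1) = \kappa(y_2) = c$. First I would invoke exhaustiveness of cohomological support for finite tensor categories \cite{berghplavnikwitherspoon} to choose objects $V_1, V_2$ in $\stab(\msf{u})$ with $\supp^{coh}_\msf{Y}(V_i) = \overline{y_i}$. Next a short topological argument gives $\kappa(\overline{y_i}) = \overline{c}$ for each $i$: continuity forces $\kappa(\overline{y_i}) \subset \overline{c}$, while closedness of $\kappa$ together with irreducibility of $\overline{y_i}$ makes $\kappa(\overline{y_i})$ an irreducible closed subset of $\mbb{P}$, whose unique generic point in the sober space $\mbb{P}$ must then be $c$. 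Proposition~\ref{prop:538} now identifies
\[
\supp^{hyp}_\mbb{P}(V_1) \;=\; \overline{c} \;=\; \supp^{hyp}_\mbb{P}(V_2).
\]

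At this point I would apply the classification of thick ideals by hypersurface support to conclude $\langle V_1 \rangle^{\ot} = \langle V_2 \rangle^{\ot}$, and then lean on the hypothesized sub-multiplicativity of cohomological support, together with the standard triangle/summand closure properties, to observe that
\[
\msc{J} \;:=\; \{W \in \stab(\msf{u}) : \supp^{coh}_\msf{Y}(W) \subset \overline{y_1}\}
\]
is a thick ideal, and clearly contains $V_1$. Hence $V_2 \in \langle V_1 \rangle^{\ot} \subset \msc{J}$, giving $\overline{y_2} \subset \overline{y_1}$. Swapping the roles of $V_1$ and $V_2$ yields the reverse containment, so $\overline{y_1} = \overline{y_2}$, and $T_0$-ness of the Noetherian scheme $\msf{Y}$ forces $y_1 = y_2$, a contradiction.

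The hard part, to the extent that there is one, lies in recognising the correct setup rather than in any individual technicality: one must use the classification to transport equality of \emph{hypersurface} supports into equality of the thick ideals $\langle V_i\rangle^{\ot}$, and then exploit sub-multiplicativity of \emph{cohomological} support to produce the cohomological containment $\overline{y_2} \subset \overline{y_1}$ that $\kappa$ would otherwise collapse. The topological inputs -- $\kappa$ closed and continuous, irreducibility preserved by continuous maps, sobriety of schemes -- are routine once this strategy is in place.
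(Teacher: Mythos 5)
Your proposal is correct and follows essentially the same route as the paper: use exhaustiveness of cohomological support to realize the (closures of the) two points as supports, use the sub-multiplicativity hypothesis to form cohomologically-defined thick ideals, observe via Proposition~\ref{prop:538} that their hypersurface supports coincide, and contradict the classification hypothesis. The only cosmetic difference is that the paper phrases the contradiction with closed points and the ideals $\msc{I}_x$, $\msc{I}_{x'}$ directly, whereas you work with arbitrary points and the ideals generated by single objects, which handles injectivity at non-closed points explicitly.
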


\begin{proof}
Indeed, suppose this is not the case, and that $\kappa$ is not a closed embedding.  Then there are two closed points $x, x'\in \msf{Y}$ with $\kappa(x)=\kappa(x')$ while $x\neq x'$.  Since all closed subsets in $\msf{Y}$ are realized as the support of some object, it follows that the thick ideals $\msc{I}_x$ and $\msc{I}_{x'}$ of objects supported at $x$ and $x'$ are distinct.  But the hypersurface supports of the ideals $\msc{I}_x$ and $\msc{I}_{x'}$ agree in this case, and we reach a contradiction.  So we see that $\kappa$ must be a closed embedding, as claimed.
\end{proof}

\subsection{(\hyperref[it:F1]{F1}) Quantum complete intersections}

We consider a standard quantum complete intersection $\msf{a}_q=\msf{a}_q(P)$ with its usual integration $\msf{A}_q\to \msf{a}_q$ and corresponding hypersurface support $\supp^{hyp}_\mbb{P}$. 
\par

In this setting the map $\kappa:\msf{Y}\to \mbb{P}(m_Z/m_Z^2)$ is an isomorphism, and the cohomological and hypersurface supports for $\stab(\msf{u})$ agree \cite{berghoppermann08} \cite[\S 11.2]{negronpevtsova}.  The dimension of $m_Z/m_Z^2$ is equal to the number of generators of $\msf{a}_q$, which we fix as $n$, and we have
\[
\mbb{P}^{n-1}=\msf{Y}\cong \mbb{P}(m_Z/m_Z^2).
\]

\begin{theorem}\label{thm:qci_ideals}
Thick ideals in $\stab(\msf{a}_q)$ are classified by cohomological support $(\mbb{P}^{n-1},\supp_\mbb{P}^{coh})$, and there is a homeomorphism
\[
\mbb{P}^{n-1}\overset{\cong}\to\cSpec(\stab(\msf{a}_q)).
\]
\end{theorem}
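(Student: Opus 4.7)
The plan is to deduce the theorem directly from the general framework set up earlier in the paper, applied to the quantum complete intersection $\msf{a}_q$. The core observation is that $\rep(\msf{a}_q)$ satisfies all of the hypotheses needed to invoke Theorems \ref{thm:classify} and \ref{thm:chevalley_balmer}, so the real work is only to identify the target space of support with $\mbb{P}^{n-1}$ and to check that hypersurface support and cohomological support agree in this setting.

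First, I would record that $\msf{a}_q$ belongs to family (\hyperref[it:F1]{F1}) and is in particular geometrically Chevalley, with the standard integration $\msf{A}_q \to \msf{a}_q$ and parametrizing subalgebra $Z=\mbb{C}\b{x_1^l,\dots,x_n^l}$. Consequently $\rep(\msf{a}_q)$ is a finite tensor category with the Chevalley property: the category of semisimple objects is the fusion subcategory $\rep(\Lambda)$ for $\Lambda=\msf{a}_q/\operatorname{Jac}(\msf{a}_q)$. By Corollary~\ref{cor:examples}, hypersurface support $(\kappa(\msf{Y}),\supp^{hyp}_\mbb{P})$ is a lavish support theory for $\stab(\msf{a}_q)$ in the sense of Definition~\ref{def:lavish}, and this uses the thick subcategory Lemma for hypersurfaces (Proposition~\ref{prop:351}) together with the tensor product property of Theorem~\ref{thm:tpp}.

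Next, I would invoke the isomorphism $\kappa:\msf{Y}\overset{\cong}\to\mbb{P}(m_Z/m_Z^2)=\mbb{P}^{n-1}$, which follows from the identifications recalled right before the theorem statement (via \cite{berghoppermann08} and \cite[\S 11.2]{negronpevtsova}). Combining this with Proposition~\ref{prop:538}, hypersurface support and cohomological support coincide and both take values in $\mbb{P}^{n-1}$. Now Theorem~\ref{thm:classify} applies in the Chevalley setting to show that the lavish theory $(\mbb{P}^{n-1},\supp^{coh})$ classifies thick ideals in $\stab(\msf{a}_q)$ via the mutually inverse correspondences of \eqref{eq:1058} and \eqref{eq:1063}.

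Finally, to produce the homeomorphism with $\cSpec(\stab(\msf{a}_q))$, I would apply Theorem~\ref{thm:chevalley_balmer} to the lavish support theory $(\mbb{P}^{n-1},\supp^{coh})$. This yields a homeomorphism
\[
f_{\supp^{coh}}:\mbb{P}^{n-1}\overset{\cong}\longrightarrow \cSpec(\stab(\msf{a}_q)),\qquad x\mapsto\{V:x\notin\supp^{coh}(V)\}.
\]
I do not anticipate a serious obstacle here: the substantive work has already been carried out in establishing lavishness (Corollary~\ref{cor:examples}) and in proving the abstract classification and spectrum theorems (Theorems~\ref{thm:classify} and~\ref{thm:chevalley_balmer}). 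The only mildly delicate point is bookkeeping between the three a priori distinct spaces $\msf{Y}$, $\mbb{P}(m_Z/m_Z^2)$ and $\mbb{P}^{n-1}$, which is handled by the known bijectivity of $\kappa$ for quantum complete intersections.
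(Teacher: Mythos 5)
Your proposal is correct and follows essentially the same route as the paper: lavishness of hypersurface support from Corollary \ref{cor:examples}, the identification $\mbb{P}^{n-1}=\msf{Y}\cong\mbb{P}(m_Z/m_Z^2)$ making cohomological and hypersurface support agree, and then Theorems \ref{thm:classify} and \ref{thm:chevalley_balmer} to classify thick ideals and compute the spectrum. The paper's proof is just a compressed version of this, citing Lemma \ref{lem:weak} and Corollary \ref{cor:examples} for the tensor extension of cohomological support and then invoking the same two theorems.
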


\begin{proof}
By Lemma \ref{lem:weak} and Corollary \ref{cor:examples}, cohomological support for $\stab(\msf{a}_q)$ admits a tensor extension to $\Stab(\msf{a}_q)$.  So the result follows by Theorem \ref{thm:classify} and Theorem \ref{thm:chevalley_balmer}.
\end{proof}

\subsection{(\hyperref[it:F2]{F2}) The quantum Borel in type $A$}

We consider the quantum Borel $u_q(B)$ with its associated De Concini-Kac integration $U^{DK}_q(B)$, and corresponding hypersurface support.  At sufficiently large order $q$, the map $\kappa:\mbb{P}(\mfk{n})\cong\msf{Y}\to \mbb{P}(m_Z/m_Z^2)$ is an isomorphism, although at low order $q$ this is not a priori known to be true.  Throughout this subsection we take
\[
\msf{Y}_q:=\msf{Y}=\Proj\Ext^\ast_{u_q(B)}(\mbb{C},\mbb{C})_{\rm red}\ \ \text{and}\ \ \supp^{coh}_q:=\supp^{coh}_\msf{Y},
\]
for the sake of specificity.

As we saw in the proof of Theorem \ref{thm:qci_ideals} above, Lemma \ref{lem:weak} and Corollary \ref{cor:examples} imply

\begin{proposition}\label{prop:borel_ideals}
Consider $\mbb{G}$ an almost-simple algebraic group in type $A$, and let $u_q(B)$ be the corresponding small quantum Borel, at arbitrary odd order parameter $q$.  Hypersurface support $(\kappa(\msf{Y}_q),\supp^{hyp}_\mbb{P})$ classifies thick ideals in $\stab(u_q(B))$.
\end{proposition}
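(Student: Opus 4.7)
The plan is to invoke Theorem~\ref{thm:classify} directly, after verifying its two hypotheses for the quantum Borel: that $\rep(u_q(B))$ has the Chevalley property, and that hypersurface support $(\kappa(\msf{Y}_q), \supp^{hyp}_\mbb{P})$ is lavish on $\stab(u_q(B))$. The structure of the argument mirrors the proof of Theorem~\ref{thm:qci_ideals} for quantum complete intersections; the novelty is purely in checking that the ingredients remain available at arbitrary odd order $q$.

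For the Chevalley property I would cite the construction recalled in Section~\ref{sect:examples}: $u_q(B)$ is the bosonization $u_q(\mfk{n})\rtimes G$ with $u_q(\mfk{n})$ local in the semisimple braided fusion category $\Rep(\Lambda)=\mrm{Vect}_{X/X^M}$. This realizes $u_q(B)$ as geometrically Chevalley in the sense of Definition~\ref{eq:337}, and the canonical embedding $\Rep(\Lambda)\hookrightarrow\rep(u_q(B))$ identifies the subcategory of semisimple objects with a tensor subcategory. So $\rep(u_q(B))$ satisfies the Chevalley hypothesis required by Theorem~\ref{thm:classify}.

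Lavishness is packaged in Corollary~\ref{cor:examples} for all of (\hyperref[it:F1]{F1})--(\hyperref[it:F4]{F4}); the quantum Borel belongs to family (\hyperref[it:F2]{F2}) via its De Concini--Kac integration $U^{DK}_q(B)\to u_q(B)$. For clarity I would recall briefly what is being used: exhaustiveness is inherited from cohomological support via Proposition~\ref{prop:538}; multiplicativity on $\stab(u_q(B))$ is from \cite{negronpevtsova}; and the faithful tensor extension to $\Stab(u_q(B))$ follows by combining the detection theorem \cite[Theorem 6.1]{negronpevtsova2} with Theorem~\ref{thm:tpp}. The key input into Theorem~\ref{thm:tpp}, namely that every hypersurface algebra for $U^{DK}_q(B)$ satisfies the thick subcategory lemma of Section~\ref{sect:tsl}, is precisely the content of Proposition~\ref{prop:351}. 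Because $u_q(B)$ is geometrically Chevalley, Theorem~\ref{thm:tpp} delivers the tensor product property globally, i.e.\ without the centralizing restriction on one of the factors.

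With both hypotheses in place, Theorem~\ref{thm:classify} yields the desired classification of thick ideals in $\stab(u_q(B))$ by specialization-closed subsets of $\kappa(\msf{Y}_q)$. The main subtlety, and what I expect would be the hard part of any attempt to strengthen the statement, is that at arbitrary odd order $q$ the map $\kappa:\msf{Y}_q\to \mbb{P}(m_Z/m_Z^2)$ is not presently known to be a closed embedding. This is precisely why the proposition is phrased in terms of $\kappa(\msf{Y}_q)$ rather than directly in terms of $\Proj\Ext^\ast_{u_q(B)}(\mbb{C},\mbb{C})$; upgrading to a classification literally by cohomological support at small $q$ would require an injectivity result for $\kappa$ that lies outside the present toolkit and would have to be pursued via Lemma~\ref{lem:class_kap} or by explicit cohomological computation.
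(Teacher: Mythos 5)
Your argument is correct and follows the paper's own route exactly: the paper proves the proposition by citing Lemma~\ref{lem:weak} and Corollary~\ref{cor:examples} (whose proof rests on Proposition~\ref{prop:351}, Theorem~\ref{thm:tpp}, Proposition~\ref{prop:538}, and the detection theorem of \cite{negronpevtsova2}) and then applying Theorem~\ref{thm:classify} via the geometrically Chevalley structure of $u_q(B)$, just as you do. One small aside: the injectivity of $\kappa$ at small $q$ is not out of reach after this proposition --- the paper deduces it immediately (Corollary~\ref{cor:948}) by combining the classification just proved with Lemma~\ref{lem:class_kap} and sub-multiplicativity of cohomological support.
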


By applying Lemma \ref{lem:class_kap}, we obtain the following somewhat surprising implication of Proposition \ref{prop:borel_ideals}.

\begin{corollary}\label{cor:948}
In type $A$, at arbitrary odd order $q$, the map $\kappa:\msf{Y}_q\to \mbb{P}(m_Z/m_Z^2)$ is topologically a closed embedding.
\end{corollary}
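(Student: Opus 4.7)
The plan is to apply Lemma~\ref{lem:class_kap} directly to $\msf{u}=u_q(B)$ equipped with its De Concini--Kac integration $U_q^{DK}(B)\to u_q(B)$. That lemma requires three inputs: (i)~$(\kappa(\msf{Y}_q),\supp^{hyp}_\mbb{P})$ is multiplicative, (ii)~hypersurface support classifies thick ideals in $\stab(u_q(B))$, and (iii)~cohomological support is sub-multiplicative, in the sense that
\[
\supp^{coh}_q(V\ot W)\subseteq \supp^{coh}_q(V)\cap \supp^{coh}_q(W)
\]
for all $V,W$ in $\stab(u_q(B))$. If all three hold, the argument in Lemma~\ref{lem:class_kap} shows that any failure of injectivity for $\kappa$ would produce two distinct closed subsets $\{x\},\{x'\}\subseteq \msf{Y}_q$ collapsing to the same closed subset of $\mbb{P}(m_Z/m_Z^2)$, hence two distinct thick ideals $\msc{I}_x\neq \msc{I}_{x'}$ with equal hypersurface supports, contradicting the classifying property.

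First I would dispatch (i) and (ii) by simply citing previous results: (i) is immediate from Corollary~\ref{cor:examples}, which asserts that hypersurface support is lavish for every member of the family (\hyperref[it:F2]{F2}), and in particular multiplicative on $\stab(u_q(B))$; (ii) is the content of Proposition~\ref{prop:borel_ideals}, which was established as a direct consequence of Corollary~\ref{cor:examples}, Theorem~\ref{thm:classify}, and the fact that $u_q(B)$ is Chevalley (even geometrically Chevalley, per Section~\ref{sect:geom_chev}). No further work is needed for these two items.

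The remaining point is (iii). I would argue that for any finite-dimensional Hopf algebra $\msf{u}$, cohomological support is sub-multiplicative: the action of $\Ext^\ast_{\msf{u}}(k,k)$ on $\Ext^\ast_{\msf{u}}(V\ot W,V\ot W)$ factors compatibly through the $\Ext^\ast_{\msf{u}}(V,V)$- and $\Ext^\ast_{\msf{u}}(W,W)$-module structures (via the evident Yoneda-type maps obtained by tensoring extensions with $W$ on the right, or with $V$ on the left). This forces the support of $\Ext^\ast_{\msf{u}}(V\ot W,V\ot W)^\sim$ on $\msf{Y}_q$ to lie in the intersection of the supports of $\Ext^\ast_{\msf{u}}(V,V)^\sim$ and $\Ext^\ast_{\msf{u}}(W,W)^\sim$, which is exactly the desired inclusion. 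This sub-multiplicativity is a standard property of cohomological support; compare, e.g., \cite{berghplavnikwitherspoon}.

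With (i)--(iii) in hand, Lemma~\ref{lem:class_kap} immediately yields that $\kappa:\msf{Y}_q\to\mbb{P}(m_Z/m_Z^2)$ is topologically a closed embedding, completing the proof. The only step that is not a direct citation is the verification of (iii), but this is routine; the substantive content of the corollary is really concentrated in Proposition~\ref{prop:borel_ideals}, which does the nontrivial work of establishing the classification, from which injectivity of $\kappa$ is then a formal consequence.
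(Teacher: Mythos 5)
Your overall route is exactly the paper's: Corollary \ref{cor:948} is obtained by feeding Proposition \ref{prop:borel_ideals} (together with the multiplicativity/exhaustiveness coming from Corollary \ref{cor:examples}) into Lemma \ref{lem:class_kap}. Items (i) and (ii) are handled correctly by citation. The problem is your item (iii). The claim that cohomological support is sub-multiplicative for \emph{any} finite-dimensional Hopf algebra is false, and the paper itself says so in Section \ref{sect:stick}: for $\msf{u}=\O(\mcl{G})$ with $\mcl{G}$ a suitable non-connected finite group scheme one has explicit $V,W$ with $\supp^{coh}_\msf{Y}(V\ot W)\nsubseteq \supp^{coh}_\msf{Y}(V)\cap\supp^{coh}_\msf{Y}(W)$ (Benson--Witherspoon, Pevtsova--Witherspoon, and Example 10.2 of the prequel). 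The reference \cite{berghplavnikwitherspoon} is cited in this paper only for faithfulness and realization, not for a tensor product property.

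The precise failure in your argument for (iii) is the factorization ``with $W$ on the right.'' The action defining $\supp^{coh}_q(V\ot W)$ sends $\xi\in\Ext^\ast_{\msf{u}}(k,k)$ to $(V\ot W)\ot\xi$; this does factor through $\Ext^\ast_{\msf{u}}(W,W)$ via $\xi\mapsto W\ot\xi$ followed by $V\ot-$, so the one-sided containment $\supp^{coh}_q(V\ot W)\subset\supp^{coh}_q(W)$ is indeed automatic. But the proposed factorization through $\Ext^\ast_{\msf{u}}(V,V)$ produces $V\ot\xi\ot W$, which in a non-braided category is not the canonical class $V\ot W\ot\xi$ unless one can commute $\xi$ past $W$; this is exactly where the counterexamples live. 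For $u_q(B)$ the needed inclusion is nevertheless true, but for a reason specific to the situation: $u_q(B)$ is geometrically Chevalley, so every object centralizes the simples (Lemma \ref{lem:w/e}), and for such objects the supports defined via the left and right tensor actions of $\Ext^\ast_{\msf{u}}(k,k)$ coincide (the fact invoked in the proof of Theorem \ref{thm:G_classification}, via \cite[Proposition 5.7.1]{benson91} and \cite[Proposition 2.10.8]{egno15}); combining the two one-sided containments then gives $\supp^{coh}_q(V\ot W)\subset\supp^{coh}_q(V)\cap\supp^{coh}_q(W)$. With (iii) justified this way, your application of Lemma \ref{lem:class_kap} goes through and agrees with the paper's proof.
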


Since hypersurface and cohomological supports now agree at arbitrary $q$, we have

\begin{theorem}\label{thm:borel_ideals}
For $u_q(B)$ in type $A$, at arbitrary odd order $q$, and any $V$ and $W$ in $\rep(u_q(B))$, cohomological support satisfies the tensor product property
\begin{equation}\label{eq:1588}
\supp^{coh}_q(V\ot W)=\supp^{coh}_q(V)\cap\supp^{coh}_q(W).
\end{equation}
Furthermore, cohomological support $(\msf{Y}_q,\supp^{coh}_q)$ classifies thick ideals in $\stab(u_q(B))$, and there is a homeomorphism
\[
\msf{Y}_q\overset{\cong}\to \cSpec(\stab(u_q(B))).
\]
\end{theorem}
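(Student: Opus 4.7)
The plan is to transport the already-established hypersurface-side statements through the homeomorphism $\kappa \colon \msf{Y}_q \to \kappa(\msf{Y}_q)$ supplied by Corollary \ref{cor:948}. Concretely, Corollary \ref{cor:948} tells us that $\kappa$ is injective on closed points and topologically a closed embedding, so it realizes $\msf{Y}_q$ as the closed subspace $\kappa(\msf{Y}_q)\subset \mbb{P}(m_Z/m_Z^2)$. Proposition \ref{prop:538} gives $\supp^{hyp}_\mbb{P}(V)=\kappa(\supp^{coh}_q(V))$ for every finite-dimensional $V$, and the same formula was extended to arbitrary $M\in\Stab(u_q(B))$ in \cite{negronpevtsova2}. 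Pulling this identity back through $\kappa$ yields $\supp^{coh}_q(V)=\kappa^{-1}\supp^{hyp}_\mbb{P}(V)$, so cohomological and hypersurface supports are literally identified as support theories on $\stab(u_q(B))$ valued in homeomorphic target spaces.

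With this identification in hand, the tensor product property \eqref{eq:1588} is immediate from Theorem \ref{thm:tpp}: $u_q(B)$ is geometrically Chevalley (its integration is the De Concini--Kac integration, see Section \ref{sect:examples}), and Proposition \ref{prop:351} verifies the thick subcategory lemma for all hypersurfaces of this integration, so Theorem \ref{thm:tpp} applies in its global (geometrically Chevalley) form and gives
\[
\supp^{hyp}_\mbb{P}(V\ot W)=\supp^{hyp}_\mbb{P}(V)\cap \supp^{hyp}_\mbb{P}(W)
\]
for \emph{all} finite-dimensional $V$ and $W$. Taking preimages under the injection $\kappa$ converts intersections of hypersurface supports to intersections of cohomological supports, yielding \eqref{eq:1588}.

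The classification of thick ideals by $(\msf{Y}_q,\supp^{coh}_q)$ is then obtained from Proposition \ref{prop:borel_ideals} by the same transport: the bijection
\[
\{\text{thick ideals in }\stab(u_q(B))\}\ \leftrightarrow\ \{\text{specialization closed subsets of }\kappa(\msf{Y}_q)\}
\]
becomes a bijection with specialization closed subsets of $\msf{Y}_q$ after pulling back through $\kappa$, and similarly for the compact-versus-closed bijection of \eqref{eq:1063}. Finally, $\rep(u_q(B))$ is a Chevalley tensor category (its semisimple part is the fusion category $\rep(\Lambda)$ coming from bosonization), and Corollary \ref{cor:examples} says that hypersurface support, and hence cohomological support after transport, is lavish in the sense of Definition \ref{def:lavish}. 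Theorem \ref{thm:chevalley_balmer} therefore produces the asserted homeomorphism $f_{\supp^{coh}_q}\colon \msf{Y}_q\xrightarrow{\cong}\cSpec(\stab(u_q(B)))$.

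The only genuinely nontrivial input is Corollary \ref{cor:948}, which in turn relies on the hypersurface-side classification of Proposition \ref{prop:borel_ideals} together with the abstract Lemma \ref{lem:class_kap}. Once $\kappa$ is known to be a closed embedding, the passage from the hypersurface formulation to the cohomological formulation is bookkeeping, so the main subtlety of the theorem is really absorbed into the proofs of Proposition \ref{prop:borel_ideals} and Corollary \ref{cor:948} rather than appearing here.
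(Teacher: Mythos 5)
Your proposal is correct and follows essentially the same route as the paper: the paper likewise derives Theorem \ref{thm:borel_ideals} by combining Proposition \ref{prop:borel_ideals} with Corollary \ref{cor:948} (itself obtained from Lemma \ref{lem:class_kap}), identifying cohomological with hypersurface support via Proposition \ref{prop:538} once $\kappa$ is a closed embedding, and invoking Theorem \ref{thm:tpp}, Corollary \ref{cor:examples}, and Theorem \ref{thm:chevalley_balmer} for the tensor product property, classification, and homeomorphism. Your closing observation that the real content is absorbed into Proposition \ref{prop:borel_ideals} and Corollary \ref{cor:948} matches the paper's presentation exactly.
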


As remarked above, at $\ord(q)$ larger than the Coxeter number $h$ we have $\msf{Y}_q=\mbb{P}(\mfk{n})$ \cite{ginzburgkumar93}, so that thick ideals in $\stab(u_q(B))$ are classified more specificially by specialization closed subsets in $\mbb{P}(\mfk{n})$, and we have a homeomorphism
\[
\mbb{P}(\mfk{n})\overset{\cong}\to \cSpec(\stab(u_q(B))).
\]

\begin{remark}
The tensor product property \eqref{eq:1588} was already obtained at $\ord(q)>h$ in \cite{negronpevtsova}.  So we emphasize that the relation is now seen to hold at arbitrary odd order parameter $q$, in type $A$.
\end{remark}

\subsection{(\hyperref[it:F3]{F3}) The Drinfeld double of $B_{(1)}$}

For the sake of specificity, we adopt the notations
\[
\msf{Y}_{dd}:=\msf{Y}=\Proj\Ext^\ast_{\mcl{D}(B_{(1)})}(k,k)_{\rm red}\ \ \text{and}\ \ \supp^{coh}_{dd}=\supp^{coh}_\msf{Y}.
\]
The subscript $dd$ here indicates the ``Drinfeld double".  We again apply Lemma \ref{lem:weak} and Corollary \ref{cor:examples} to obtain

\begin{proposition}\label{prop:db_ideals}
Consider $B$ a Borel in an almost-simple algebraic group $\mbb{G}$, over a field of arbitrary characteristic.  Hypersurface support $(\kappa(\msf{Y}_{dd}),\supp^{hyp}_\mbb{P})$ for the double classifies thick ideals in the stable category $\stab(\mcl{D}B_{(1)})$.
\end{proposition}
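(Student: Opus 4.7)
The plan is to deduce this proposition by exactly the same template used for Theorem \ref{thm:qci_ideals} and Proposition \ref{prop:borel_ideals}: specialize the abstract classification result of Theorem \ref{thm:classify} to the case $\msc{C}=\rep(\mcl{D}(B_{(1)}))$. The two things I need to check are that hypersurface support on $\stab(\mcl{D}(B_{(1)}))$ is lavish in the sense of Definition \ref{def:lavish}, and that $\rep(\mcl{D}(B_{(1)}))$ has the Chevalley property, i.e.\ that its semisimples form a tensor subcategory.

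For lavishness, I would simply invoke Corollary \ref{cor:examples}, which asserts that $(\kappa(\msf{Y}),\supp^{hyp}_\mbb{P})$ is a lavish support theory on $\stab(\msf{u})$ for every $\msf{u}$ in the families (\hyperref[it:F1]{F1})--(\hyperref[it:F4]{F4}). Since $\mcl{D}(B_{(1)})$, equipped with its integration $(\O(B)\rtimes U(\mfk{n}))\rtimes kT_{(1)}\to \mcl{D}(B_{(1)})$, is listed as family (\hyperref[it:F3]{F3}) in Section \ref{sect:examples}, the corollary applies directly to $\msf{u}=\mcl{D}(B_{(1)})$. Unwinding the corollary, the nontrivial inputs are the thick subcategory lemma for the relevant hypersurface algebras (Proposition \ref{prop:351}, already verified for (\hyperref[it:F3]{F3})), the tensor product property from Theorem \ref{thm:tpp} (applicable because the specified integration is conormal, with $\mcl{Z}=\O(B^{(1)})\ot Z_0(\mfk{b})$ a Hopf subalgebra), and the triangular extension provided by Lemma \ref{lem:weak}.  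All three are in hand.

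For the Chevalley property, I would cite the discussion at the end of Section \ref{sect:geom_chev}, which singles out Drinfeld doubles of height $1$ Borels as one of the two principal families of geometrically Chevalley Hopf algebras treated in this text. Being geometrically Chevalley in the sense of Definition \ref{eq:337} presents $\mcl{D}(B_{(1)})$ as a bosonization $\msf{u}^+\rtimes\Lambda$, and the tensor embedding $\rep(\Lambda)\hookrightarrow\rep(\mcl{D}(B_{(1)}))$ identifies the semisimple subcategory with a fusion tensor subcategory. This is precisely the Chevalley hypothesis required by Theorem \ref{thm:classify}; combined with the lavishness established above, Theorem \ref{thm:classify} immediately delivers the classification of thick ideals of $\stab(\mcl{D}(B_{(1)}))$ by $(\kappa(\msf{Y}_{dd}),\supp^{hyp}_\mbb{P})$.

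There is essentially no obstacle at the level of this proposition; the real work lives upstream, in verifying the thick subcategory lemma for the hypersurfaces of $(\O(B)\rtimes U(\mfk{n}))\rtimes kT_{(1)}$ (Proposition \ref{prop:351}) and in showing that $\mcl{D}(B_{(1)})$ is geometrically Chevalley. Both of these have already been established in \cite{negronpevtsova}, so the argument here reduces to a formal citation.
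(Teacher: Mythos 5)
Your proposal is correct and matches the paper's own argument: the paper derives Proposition \ref{prop:db_ideals} by citing Lemma \ref{lem:weak} and Corollary \ref{cor:examples} (lavishness of hypersurface support for the family (\hyperref[it:F3]{F3})) and then applying the Chevalley-case classification exactly as in the proofs of Theorem \ref{thm:qci_ideals} and Proposition \ref{prop:borel_ideals}. Your identification of the upstream inputs (Proposition \ref{prop:351}, Theorem \ref{thm:tpp} via conormality, and the geometrically Chevalley property of $\mcl{D}(B_{(1)})$ from \cite{negronpevtsova}) is precisely what the cited corollary and Theorem \ref{thm:classify} rest on.
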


We now observe

\begin{corollary}\label{cor:949}
In the setting of Proposition \ref{prop:db_ideals}, the map $\kappa:\msf{Y}_{dd}\to \mbb{P}(m_Z/m_Z^2)$ is topologically a closed embedding.
\end{corollary}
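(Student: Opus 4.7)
The plan is to imitate the proof of Corollary \ref{cor:948} verbatim: invoke Lemma \ref{lem:class_kap} with the classifying data provided by Proposition \ref{prop:db_ideals}. To apply the lemma I need three inputs: multiplicativity of hypersurface support on $\stab(\mcl{D}(B_{(1)}))$, the fact that $(\kappa(\msf{Y}_{dd}),\supp^{hyp}_\mbb{P})$ classifies thick ideals, and the sub-multiplicative containment $\supp^{coh}_{dd}(V\otimes W)\subset \supp^{coh}_{dd}(V)\cap \supp^{coh}_{dd}(W)$ for cohomological support.

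The first two inputs are already in hand: multiplicativity and the existence of a faithful tensor extension are given by Corollary \ref{cor:examples} (which covers family (\hyperref[it:F3]{F3})), and classification is Proposition \ref{prop:db_ideals}. So all that remains is to establish the sub-multiplicativity of cohomological support for $\mcl{D}(B_{(1)})$. For this one uses the fact that $\mcl{D}(B_{(1)})$ is a Drinfeld double, hence quasitriangular, so that $\rep(\mcl{D}(B_{(1)}))$ is braided. In a braided tensor category, every object centralizes the entire category, and in particular the usual tensor product property for cohomological support $\supp^{coh}_{dd}(V\otimes W)\subset \supp^{coh}_{dd}(V)\cap\supp^{coh}_{dd}(W)$ holds via the standard argument identifying $\Ext^\ast_{\mcl{D}(B_{(1)})}(V\otimes W,V\otimes W)^{\sim}$ as a module over the cohomology of both $V$ and $W$ (cf.\ \cite[Proposition 3.3]{berghplavnikwitherspoon}).

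Once these three hypotheses are verified, Lemma \ref{lem:class_kap} applies directly and yields that $\kappa:\msf{Y}_{dd}\to \mbb{P}(m_Z/m_Z^2)$ is topologically a closed embedding, exactly as in the proof of Corollary \ref{cor:948}. The only genuinely new ingredient compared to the type $A$ quantum Borel case is the invocation of the braided structure on $\rep(\mcl{D}(B_{(1)}))$; everything else is transported unchanged from the preceding argument. Since Drinfeld doubles are braided essentially by construction, this is routine and I do not expect any serious obstacle.
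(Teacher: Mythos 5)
Your proposal is correct and is essentially the paper's argument: the paper's proof of this corollary is simply an application of Lemma \ref{lem:class_kap}, with the multiplicativity and classification hypotheses supplied by Corollary \ref{cor:examples} and Proposition \ref{prop:db_ideals} exactly as you do. Your explicit justification of the sub-multiplicative containment for cohomological support via the braiding on $\rep(\mcl{D}(B_{(1)}))=Z(\rep B_{(1)})$ is a valid way to supply the remaining hypothesis, which the paper leaves implicit.
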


\begin{proof}
Apply Lemma \ref{lem:class_kap}.
\end{proof}

As a consequence we obtain a characteristic free description of support for the double $\mcl{D}(B_{(1)})$.

\begin{theorem}\label{thm:db_ideals}
Cohomological support $(\msf{Y}_{dd},\supp^{coh}_{dd})$ classifies thick ideals in $\stab(\mcl{D}B_{(1)})$, and satisfies the tensor product property
\[
\supp^{coh}_{dd}(V\ot W)=\supp^{coh}_{dd}(V)\cap\supp^{coh}_{dd}(W).
\]
There is furthermore a homeomorphism $\msf{Y}_{dd}\overset{\cong}\to \cSpec(\stab(\mcl{D}B_{(1)}))$.
\end{theorem}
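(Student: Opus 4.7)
The plan is to assemble the theorem directly from Proposition \ref{prop:db_ideals}, Corollary \ref{cor:949}, and the general machinery already developed. By Corollary \ref{cor:949}, $\kappa\colon \msf{Y}_{dd}\to \mbb{P}(m_Z/m_Z^2)$ is topologically a closed embedding, hence a homeomorphism onto its image $\kappa(\msf{Y}_{dd})$. Proposition \ref{prop:538} gives the pointwise identity $\supp^{hyp}_\mbb{P}(V)=\kappa\big(\supp^{coh}_{dd}(V)\big)$ for every $V\in \stab(\mcl{D}(B_{(1)}))$. Combining these two facts, $\kappa$ identifies the support theories $(\msf{Y}_{dd},\supp^{coh}_{dd})$ and $(\kappa(\msf{Y}_{dd}),\supp^{hyp}_\mbb{P})$ on $\stab(\mcl{D}(B_{(1)}))$.

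Given this identification, the classification of thick ideals in $\stab(\mcl{D}(B_{(1)}))$ by cohomological support is immediate from Proposition \ref{prop:db_ideals}. For the tensor product property, recall that $\mcl{D}(B_{(1)})$ belongs to family (F3) and is geometrically Chevalley with the Chevalley integration described in Section \ref{sect:examples}; by Proposition \ref{prop:351} its hypersurface algebras satisfy the thick subcategory lemma. The geometrically Chevalley case of Theorem \ref{thm:tpp} therefore yields
\[
\supp^{hyp}_\mbb{P}(V\ot W)=\supp^{hyp}_\mbb{P}(V)\cap\supp^{hyp}_\mbb{P}(W)
\]
for arbitrary finite-dimensional $V,W$. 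Transferring along the homeomorphism $\kappa$ produces the claimed tpp for $\supp^{coh}_{dd}$.

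For the homeomorphism with the Balmer-style spectrum, note that $\rep(\mcl{D}(B_{(1)}))$ has the Chevalley property (its subcategory of semisimples is a tensor subcategory, by geometrical Chevalley-ness). Corollary \ref{cor:examples} asserts that hypersurface support $(\kappa(\msf{Y}_{dd}),\supp^{hyp}_\mbb{P})$ is a lavish support theory for $\stab(\mcl{D}(B_{(1)}))$, so Theorem \ref{thm:chevalley_balmer} applies and produces a homeomorphism $\kappa(\msf{Y}_{dd})\overset{\cong}\to \cSpec(\stab(\mcl{D}(B_{(1)})))$. Pre-composing with the homeomorphism $\msf{Y}_{dd}\overset{\cong}\to \kappa(\msf{Y}_{dd})$ from Corollary \ref{cor:949} gives the stated homeomorphism $\msf{Y}_{dd}\overset{\cong}\to \cSpec(\stab(\mcl{D}(B_{(1)})))$.

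There is no real obstacle at this stage of the paper: all the substantive analytic work (the thick subcategory lemma for hypersurface algebras in family (F3), the geometrically Chevalley structure of $\mcl{D}(B_{(1)})$, the general classification theorem \ref{thm:classify}, and the abstract spectrum identification \ref{thm:chevalley_balmer}) has already been established. The only point requiring a modicum of care is verifying that Corollary \ref{cor:949}, which upgrades $\kappa$ from a finite map to a closed embedding, is genuinely available at this point of the argument without circularity; but this comes from Proposition \ref{prop:db_ideals} via Lemma \ref{lem:class_kap}, which only needs the hypersurface classification and the sub-multiplicativity of cohomological support (Lemma \ref{lem:weak} together with Proposition \ref{prop:538}), so the logic is sound.
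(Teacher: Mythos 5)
Your argument is correct and essentially identical to the paper's: Theorem \ref{thm:db_ideals} is obtained there precisely by combining Proposition \ref{prop:db_ideals}, Corollary \ref{cor:949} (via Lemma \ref{lem:class_kap}), the identification of cohomological with hypersurface support through Proposition \ref{prop:538} once $\kappa$ is a topological closed embedding, the geometrically Chevalley case of Theorem \ref{thm:tpp}, and Theorem \ref{thm:chevalley_balmer}. The only imprecision is in your closing remark: the sub-multiplicativity of \emph{cohomological} support required as a hypothesis in Lemma \ref{lem:class_kap} does not follow formally from Lemma \ref{lem:weak} together with Proposition \ref{prop:538} (that inference would already presuppose injectivity of $\kappa$), but it holds here because $\rep(\mcl{D}(B_{(1)}))$ is braided (all objects centralize the simples), so your non-circularity claim and the proof as a whole stand.
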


As with the quantum Borel situation, the space $\msf{Y}_{dd}$ is completely unambiguous in large characteristic, where we have
\[
\msf{Y}_{dd}=\mbb{P}\big(\mfk{n}\times (\mfk{b}^\ast)^{(1)}\big)
\]
\cite{friedlandernegron18}.  Here $\mfk{b}$ is the Lie algebra for $B$, $\mfk{n}$ is the nilpotent radical in $\mfk{b}$, and $\mfk{b}^{(1)}$ denotes the Frobenius twist.

\subsection{A general result for central generation}

Below we prove that all thick ideals in the stable categories for (\hyperref[it:F1]{F1})--(\hyperref[it:F3]{F3}) are centrally generated.  We rely on the following general observation.

\begin{lemma}\label{lem:general}
Consider $\msc{C}$ any finite tensor category, and suppose that thick ideals in the stable category $\stab(\msc{C})$ are classified by cohomological support.  Suppose additionally that the forgetful functor $F:Z(\msc{C})\to \msc{C}$ induces a topological closed embedding
\[
F^\ast:\Spec\Ext^\ast_{\msc{C}}(\1,\1)\to \Spec\Ext^\ast_{Z(\msc{C})}(\1,\1),
\]
and that all of the cohomology rings in question are finitely generated algebras.  Then all thick ideals in $\stab(\msc{C})$ are centrally generated.
\end{lemma}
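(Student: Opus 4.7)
The plan is to reduce the claim to a support-realization statement: every closed subset $\Theta\subset\msf{Y}_\msc{C}:=\Proj\Ext^\ast_\msc{C}(\1,\1)_{\rm red}$ should be realized as the cohomological support $\supp^{coh}_\msc{C}(F(W))$ for some $W\in\stab(Z(\msc{C}))$.  Granting this, for any object $V\in\stab(\msc{C})$ with closed support $\Theta=\supp^{coh}_\msc{C}(V)$, the classification hypothesis forces the thick ideal $\langle V\rangle^\ot$ to equal $\langle F(W)\rangle^\ot$; since every thick ideal in $\stab(\msc{C})$ is the union of the thick ideals generated by its individual objects, it follows that all thick ideals are centrally generated.

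To realize a given $\Theta=V(\xi_1,\ldots,\xi_n)$, where the $\xi_i$ are homogeneous elements of the Noetherian ring $\Ext^\ast_\msc{C}(\1,\1)$, I would first invoke the hypothesis that $F^\ast$ is topologically a closed embedding to conclude that the underlying ring map $\Ext^\ast_{Z(\msc{C})}(\1,\1)\to\Ext^\ast_\msc{C}(\1,\1)$ is surjective modulo nilpotents.  After replacing each $\xi_i$ by a sufficiently high power, which leaves the vanishing locus $V(\xi_i)$ unchanged, I may therefore lift each $\xi_i$ to a homogeneous element $\zeta_i\in\Ext^\ast_{Z(\msc{C})}(\1,\1)$ mapping to $\xi_i$.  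Let $\Xi_{\zeta_i}\in\stab(Z(\msc{C}))$ denote the Koszul (Carlson) object, defined as the cone of a morphism $\Sigma^{-|\zeta_i|}\1\to\1$ representing $\zeta_i$, and set
\[
W:=\Xi_{\zeta_1}\ot\cdots\ot\Xi_{\zeta_n}\in\stab(Z(\msc{C})).
\]
Since the forgetful functor $F$ is an exact tensor functor, it commutes with cones and tensor products, so $F(W)\cong\Xi_{\xi_1}\ot\cdots\ot\Xi_{\xi_n}$ in $\stab(\msc{C})$, with each factor central by construction.

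A standard long-exact-sequence argument applied to the defining triangle of each $\Xi_{\xi_i}$ yields $\supp^{coh}_\msc{C}(\Xi_{\xi_i})=V(\xi_i)$, and since the $\Xi_{\xi_i}$ are central---hence centralize the simples---the multiplicativity of $\supp^{coh}_\msc{C}$ built into the classification hypothesis (Definition~\ref{def:reasonable}) applies iteratively to give
\[
\supp^{coh}_\msc{C}(F(W))=\bigcap_{i=1}^n V(\xi_i)=\Theta,
\]
as required.  The main technical hurdle is the lifting of the cohomology classes $\xi_i$ to the center: this is precisely where the closed-embedding hypothesis on $F^\ast$ is essential, for without surjectivity modulo nilpotents of $\Ext^\ast_{Z(\msc{C})}(\1,\1)\to\Ext^\ast_\msc{C}(\1,\1)$ there could exist classes in $\Ext^\ast_\msc{C}(\1,\1)$ admitting no central lift (even after taking powers), obstructing the Koszul construction in $Z(\msc{C})$.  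The remaining pieces---exactness and monoidality of $F$ on Koszul objects, and the Koszul-level computation of support---are routine given the standing hypotheses.
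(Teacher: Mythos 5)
Your overall strategy is the same as the paper's: realize closed subsets of $\Proj\Ext^\ast_\msc{C}(\1,\1)$ as supports of images under $F$ of tensor products of Carlson/Koszul objects built in $Z(\msc{C})$, then invoke the classification-by-support hypothesis to conclude that every principal ideal, and hence every thick ideal, has a central generating set. The concluding bookkeeping (equality of principal ideals with equal closed supports, and writing an arbitrary ideal as the union of its principal subideals) is fine, and your computation of the support of the product via $\supp^{coh}(\Xi_\xi)=V(\xi)$ plus multiplicativity for objects centralizing the simples is a legitimate variant of the paper's direct appeal to the Bergh--Plavnik--Witherspoon realization theorem for products of Koszul objects.

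However, there is one incorrect step: the inference that a \emph{topological} closed embedding $\Spec\Ext^\ast_\msc{C}(\1,\1)\to\Spec\Ext^\ast_{Z(\msc{C})}(\1,\1)$ forces the ring map $\Ext^\ast_{Z(\msc{C})}(\1,\1)\to\Ext^\ast_\msc{C}(\1,\1)$ to be surjective modulo nilpotents. This is false in general: the inclusion $k[t^2,t^3]\subset k[t]$ induces a homeomorphism on spectra, both rings are reduced, and the map is not surjective; nor does every element acquire, after taking powers, a lift up to nilpotents in general. So your lifting of the individual classes $\xi_i$ to central classes $\zeta_i$ is not justified by the stated hypothesis. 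Fortunately the argument does not need it. The closed-embedding hypothesis says exactly that the topology on $\Spec\Ext^\ast_\msc{C}(\1,\1)$ is the subspace topology, i.e.\ every closed subset $\Theta$ is the preimage of a closed subset upstairs, hence is cut out by finitely many homogeneous classes $F(\zeta_1),\dots,F(\zeta_m)$ with $\zeta_j\in\Ext^\ast_{Z(\msc{C})}(\1,\1)$ (finiteness by the Noetherian hypothesis). One should therefore abandon the attempt to lift the given generators $\xi_i$ of $\Theta$ and instead form $W=\Xi_{\zeta_1}\ot\cdots\ot\Xi_{\zeta_m}$ for these $\zeta_j$ directly; then $F(W)$ is central with $\supp^{coh}_\msc{C}(F(W))=\bigcap_j V(F(\zeta_j))=\Theta$, and the rest of your argument goes through unchanged. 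This weaker consequence of the hypothesis is precisely what the paper's proof uses.
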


\begin{proof}
Fix $\msc{Z}=Z(\msc{C})$.  For an extension $\xi:\1\to \Sigma^n\1$ in $D^b(\msc{Z})$, we consider the mapping cone $L_\xi=\operatorname{cone}(\xi)$.  The cohomological support of this object is the vanishing locus $Van(\xi)$ of $\xi\in \Ext^n_\msc{Z}(\1,\1)$ in the spectrum of cohomology.  Furthermore, the support of a product of such complexes $L_{\xi_1}\ot\dots \ot L_{\xi_n}$ is the vanishing locus $Van(\xi_1,\dots,\xi_n)$ \cite[Theorem 5.2]{berghplavnikwitherspoon}.  So in particular, all closed subsets in the spectrum of cohomology are realized as supports of products of mapping cones $L_\xi$.
\par

Now consider the forgetful (tensor) functor $F:\msc{Z}\to \msc{C}$, and its derived counterpart, which we denote by the same letter $F:D^b(\msc{Z})\to D^b(\msc{C})$.   We have $F(L_\xi)\cong L_{F(\xi)}$ and also $F(L_1\ot\dots \ot L_n)\cong F(L_1)\ot\dots \ot F(L_n)$.  Since all closed subvarieties in $\Proj\Ext^\ast_\msc{C}(\1,\1)$ are cut out by functions in the image of the map $F:\Ext^\ast_\msc{Z}(\1,\1)\to \Ext^\ast_\msc{C}(\1,\1)$, by hypothesis, the above information implies that all such closed subvarieties are realized as the supports of objects of the form $F(V)$, with $V$ in $\msc{Z}$.  Similarly, all specialization closed subset in $\Proj\Ext^\ast_\msc{C}(\1,\1)$ are realized as supports of collections of objects in the image of the functor $F:\msc{Z}\to \msc{C}$.
\par

Since thick ideals in $\stab(\msc{C})$ are classified by cohomological support, by hypothesis, we see now that all thick ideals in $\stab(\msc{C})$ admit a central generating set.
\end{proof}

\subsection{Central generation of ideals}
\label{sect:central}

Since $\rep(\mcl{D}(B_{(1)}))=Z(\rep B_{(1)})$ is already braided, all thick ideals in its stable category are immediately centrally generated.  (Note that in this case, the center $Z(\rep B_{(1)})$ is our initial category $\msc{C}$, so that the second iteration $Z(Z(\rep B_{(1)}))$ would be $Z(\msc{C})$.)  In the cases of $\msf{a}_q$ and $u_q(B)$, however, the representation categories are not braided in general.  Despite this fact, we see below that all thick ideals in their respective stable categories remain centrally generated.

\begin{lemma}\label{lem:1018}
The maps $\Spec\Ext^\ast_{\msf{a}_q}(\mbb{C},\mbb{C})\to \Spec\Ext^\ast_{\mcl{D}(\msf{a}_q)}(\mbb{C},\mbb{C})$ for the quantum complete intersection, and $\Spec\Ext^\ast_{u_q(B)}(\mbb{C},\mbb{C})\to \Spec\Ext^\ast_{\mcl{D}(u_q(B))}(\mbb{C},\mbb{C})$ for the quantum Borel in type $A$, are topologically closed embeddings.
\end{lemma}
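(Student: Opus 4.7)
The plan is to reduce the claim to showing that the induced ring map on cohomology
\[
F^{\ast}\colon \Ext^{\ast}_{\mcl{D}(\msf{u})}(\mbb{C},\mbb{C})\to \Ext^{\ast}_{\msf{u}}(\mbb{C},\mbb{C})
\]
is surjective modulo nilpotents, where $F\colon\rep(\mcl{D}(\msf{u}))=Z(\rep(\msf{u}))\to\rep(\msf{u})$ is the forgetful tensor functor.  Since the cohomology rings are finitely generated (by Theorem~\ref{prop:538}/Theorem~4.8 of \cite{negronpevtsova}), surjectivity modulo nilpotents is equivalent to $F^{\ast}$ being finite with image cofinite in the target up to nilpotents, which is precisely the condition for the dual map $\Spec F^{\ast}$ to be a topological closed embedding.

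To exhibit the generators of $\Ext^{\ast}_{\msf{u}}(\mbb{C},\mbb{C})$ (modulo nilpotents) in the image of $F^{\ast}$, I would appeal to the integration machinery of Section~\ref{sect:recoll}.  For $\msf{u}\in\{\msf{a}_q,\,u_q(B)\}$ the finite map $A_Z\to \Ext^{\ast}_{\msf{u}}(\mbb{C},\mbb{C})$ produced by the chosen integration already has image cofinite in the reduced cohomology, so it suffices to show that every deformation class produced by this map lifts along $F^{\ast}$.  The plan is then to put a compatible integration on the Drinfeld double and to match the two deformation squares: for $\msf{a}_q$ the double $\mcl{D}(\msf{a}_q)$ admits a natural integration by the Drinfeld double of $A_q$, whose parametrizing coideal subalgebra $Z'$ surjects onto the parametrizing subalgebra $Z=\mbb{C}[x_1^l,\dots,x_n^l]$ of $A_q$; for $u_q(B)$ in type $A$ the double $\mcl{D}(u_q(B))$ carries a ``doubled" De Concini--Kac integration whose parametrizing subalgebra contains the $l$-th powers of both positive and negative root vectors, and in particular surjects onto the $l$-th power algebra governing $U_q^{DK}(B)\to u_q(B)$.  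Once these integrations are in hand, naturality of the deformation-to-extension construction produces a commutative diagram
\[
\begin{array}{ccc}
A_{Z'} & \longrightarrow & \Ext^{\ast}_{\mcl{D}(\msf{u})}(\mbb{C},\mbb{C}) \\
\big\downarrow & & \big\downarrow F^{\ast} \\
A_Z & \longrightarrow & \Ext^{\ast}_{\msf{u}}(\mbb{C},\mbb{C}),
\end{array}
\]
in which the left vertical arrow is surjective.  The image of $F^{\ast}$ then contains the image of $A_Z$, which is cofinite in the reduced cohomology, and we conclude.

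The main obstacle is constructing the compatible integration of the Drinfeld double and verifying the commutativity of the square above.  In the quantum complete intersection case this is a relatively clean check: one writes $\mcl{D}(\msf{a}_q)$ explicitly as a smash product of $\msf{a}_q$ with a sibling quantum complete intersection, so the integration, its parametrizing algebra, and the compatibility all come for free.  The truly delicate case is $u_q(B)$ at arbitrary odd order, where the identification of cohomology with $\O(\mfk{n})$ of \cite{ginzburgkumar93} is unavailable at small $q$; one must instead work abstractly through Gerstenhaber brackets and the integration classes, tracking the inclusion $u_q(B)\hookrightarrow \mcl{D}(u_q(B))$ of Hopf algebras and pulling back deformations to verify that the $l$-th power generators of $Z$ really do arise as restrictions of generators of $Z'$.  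Assuming that this compatibility is set up correctly, the diagram-chase at the end is straightforward, and the conclusion that $\Spec F^{\ast}$ is a topological closed embedding follows at once.
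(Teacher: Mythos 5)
Your overall skeleton (put a compatible integration on the double, commute the deformation square, and conclude that the image of $\Ext^\ast_{\mcl{D}(\msf{u})}(\mbb{C},\mbb{C})\to\Ext^\ast_{\msf{u}}(\mbb{C},\mbb{C})$ contains the image of $A_Z$) is the same as the paper's, which for $\msf{a}_q$ realizes $\mcl{D}(\msf{a}_q)$ as a cocycle twist $(\msf{a}_q\ot\msf{a}_\zeta)_\sigma$ and pulls the cocycle back to $\msf{A}_q\ot\msf{A}_\zeta$ to get a deformation parametrized by $Z_q\ot Z_\zeta$ surjecting onto $Z_q$. But your reduction and your concluding step contain a genuine logical gap. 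You claim that surjectivity of $F^\ast$ modulo nilpotents is ``equivalent to $F^\ast$ being finite with image cofinite in the target up to nilpotents,'' and later you conclude from ``the image of $F^\ast$ contains the image of $A_Z$, which is cofinite'' that the spectrum map is a closed embedding. Neither implication holds: a finite ring extension always has cofinite image, yet its induced map on spectra is merely closed and surjective, not injective (take the invariants $k[x^2]=k[x]^{\mbb{Z}/2}\subset k[x]$, where the spectrum map identifies $\pm a$). So knowing that the image of $F^\ast$ contains a subalgebra over which the cohomology is finite proves nothing about injectivity of $\Spec\Ext^\ast_{\msf{u}}\to\Spec\Ext^\ast_{\mcl{D}(\msf{u})}$.

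What the argument actually needs, and what the paper uses, is that the map $\Spec\Ext^\ast_{\msf{u}}(\mbb{C},\mbb{C})\to\Spec A_Z$ induced by the deformation (the affine version of $\kappa$) is itself topologically a closed embedding; only then does the factorization through the cohomology of the double force the lemma. For $\msf{a}_q$ this is fine, since $\kappa$ is an isomorphism. For $u_q(B)$ at \emph{arbitrary} odd order, however, this is precisely Corollary \ref{cor:948}, which is not an input you can take for granted: it is derived in the paper from the classification of thick ideals by hypersurface support (Proposition \ref{prop:borel_ideals} together with Lemma \ref{lem:class_kap}), exactly because the Ginzburg--Kumar identification is unavailable at small $q$. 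Your proposed substitute --- working ``abstractly through Gerstenhaber brackets and the integration classes'' --- addresses the compatibility of the two deformation maps (the commuting square), which is the easier half, but says nothing about injectivity of $\kappa$ on spectra, which is the crux. You would need to either invoke Corollary \ref{cor:948} or reprove it; without it the proof of the Borel case does not close. (Secondarily, your proposed integration of $\mcl{D}(\msf{a}_q)$ by ``the Drinfeld double of $A_q$'' is problematic since $A_q$ is infinite dimensional; the cocycle-twist presentation of the double is the cleaner device, and the same remark applies to the ``doubled De Concini--Kac'' algebra.)
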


\begin{proof}
We deal with the case of $\msf{a}_q$.  Let $Z_q\subset \msf{A}_q$ denote the parametrizing subalgebra for the given deformation $\msf{A}_q$.
\par

The dual $\msf{a}_q^\ast$ is of the form $\msf{a}_{\zeta}$ so that the Drinfeld double $\mcl{D}(\msf{a}_q)$ is a cocycle deformation of the product $\mcl{D}(\msf{a}_q)=(\msf{a}_q\ot\msf{a}_\zeta)_\sigma$ \cite{doitakeuchi94}.  Pulling back this cocycle along the integration $\msf{A}_q\ot\msf{A}_\zeta\to \msf{a}_q\ot\msf{a}_\zeta$, we can cocycle deform the product $\msf{A}_q\ot \msf{A}_\zeta$ to produce a deformation $\msf{D}=(\msf{A}_q\ot \msf{A}_\zeta)_\sigma$ parameterized by the (completed) product $Z_q\ot Z_\zeta$.  We then have a map of deformations
\[
\xymatrix{
Z_q\ot Z_\zeta\ar[r] & \msf{D}\ar[r] & \mcl{D}(\msf{a}_q)\\
Z_q\ar[u]\ar[r] & \msf{A}_q\ar[u]\ar[r] & \msf{a}_q.\ar[u]
}
\]
\par

One can use the above diagram to verify that the corresponding actions on cohomology fit into a diagram
\[
\xymatrix{
A_{Z_q\ot Z_\zeta}\ar@{->>}[d]\ar[r] & \Ext^\ast_{\mcl{D}(\msf{a})}(\mbb{C},\mbb{C})\ar[d]^{\res}\\
A_{Z_q}\ar[r] & \Ext^\ast_{\msf{a}_q}(\mbb{C},\mbb{C}).
}
\]
(One can establish commutativity of the above diagram by considering the description of the action on cohomology via the functor $D^b(B_Z)\to D^b(\msf{u})$ associated to a given deformation \cite{bezrukavnikovginzburg07} \cite[\S 3.4]{negronpevtsova}.)  Since the bottom map induces a topologically closed embedding on spectra, the restriction map from the cohomology of the double $\mcl{D}(\msf{a}_q)$ induces a topologically closed embedding on spectra of cohomology.
\par

The argument for the quantum Borel is completely similar, where we make an additional reference to Corollary \ref{cor:948} to observe that the map $A_Z\to \Ext^\ast_{u_q(B)}(\mbb{C},\mbb{C})$ induces a topologically closed embedding on spectra. 
\end{proof}

\begin{remark}
At large order $q$, it is well-known that the map in question for the quantum Borel is surjective, in arbitrary Dynkin type.  Indeed, the map on spectra of cohomology is identified with the closed embedding $\mfk{n}\to \mcl{N}$ of the positive nilpotent subalgebra in $\mfk{g}=\operatorname{Lie}(\mbb{G})$ into the nilpotent cone.
\end{remark}

\begin{theorem}\label{thm:center_genI}
Consider $\msc{C}$ the category of representations for a bosonized quantum complete intersection $\msf{a}_q$, or a quantum Borel $u_q(B)$ in type $A$.  All thick ideals in $\stab(\msc{C})$ are centrally generated.
\end{theorem}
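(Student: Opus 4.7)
The plan is to deduce the theorem as a direct application of Lemma \ref{lem:general}, where the two preceding lemmas of the section have been set up precisely to supply its hypotheses. Let me outline how each piece fits.

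First, I would identify the Drinfeld center of $\rep(\msf{u})$, for $\msf{u}=\msf{a}_q$ or $u_q(B)$, with the representation category of the Drinfeld double $\mcl{D}(\msf{u})$; this is the standard description $Z(\rep(\msf{u}))\cong \rep(\mcl{D}(\msf{u}))$ for a finite-dimensional Hopf algebra, and under it the forgetful functor $F:Z(\rep(\msf{u}))\to \rep(\msf{u})$ becomes restriction along the inclusion $\msf{u}\hookrightarrow \mcl{D}(\msf{u})$. The induced action on cohomology is then precisely the restriction map $\Ext^\ast_{\mcl{D}(\msf{u})}(\1,\1)\to \Ext^\ast_{\msf{u}}(\1,\1)$ appearing in Lemma \ref{lem:1018}.

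Now I invoke Lemma \ref{lem:general}. Its three hypotheses are verified as follows: classification of thick ideals in $\stab(\msf{u})$ by cohomological support is exactly the content of Theorem \ref{thm:qci_ideals} for the quantum complete intersection and Theorem \ref{thm:borel_ideals} for the type $A$ quantum Borel at arbitrary odd order $q$; the closed embedding condition on induced maps of cohomology spectra is precisely Lemma \ref{lem:1018}; and finite generation of the cohomology rings $\Ext^\ast_\msf{u}(\1,\1)$, $\Ext^\ast_{\mcl{D}(\msf{u})}(\1,\1)$ is a consequence of the finiteness of the maps $A_Z\to \Ext^\ast_\msf{u}(\1,\1)$ from the polynomial coordinate rings of parametrizing subalgebras, see \cite[Theorem 4.8]{negronpevtsova} and the discussion preceding Lemma \ref{lem:1018}. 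With all three inputs in place, Lemma \ref{lem:general} immediately yields that every thick ideal in $\stab(\rep(\msf{u}))$ is centrally generated.

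There is essentially no obstacle here, as all the serious work has been absorbed into the preceding lemmas. The one point worth being careful about is ensuring the identification of the induced map $F^\ast$ on $\Spec$ of cohomology with the map arising from the Hopf-algebraic restriction $\mcl{D}(\msf{u})\to \msf{u}$; once this is in hand, the hypotheses of Lemma \ref{lem:general} are obtained by direct citation, so the proof reduces to a single sentence combining Theorems \ref{thm:qci_ideals} and \ref{thm:borel_ideals} with Lemmas \ref{lem:1018} and \ref{lem:general}.
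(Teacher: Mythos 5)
Your proposal is correct and follows the paper's own argument exactly: the paper proves this theorem by the one-line combination of Lemma \ref{lem:general} with Lemma \ref{lem:1018}, the classification inputs being Theorems \ref{thm:qci_ideals} and \ref{thm:borel_ideals} as you say. Your added care about identifying $Z(\rep(\msf{u}))$ with $\rep(\mcl{D}(\msf{u}))$ and the forgetful functor with restriction along $\msf{u}\hookrightarrow\mcl{D}(\msf{u})$ is implicit in the paper's setup and raises no issue.
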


\begin{proof}
Apply Lemma \ref{lem:general} and Lemma \ref{lem:1018}.
\end{proof}

\subsection{Remarks on the quantum Borel and relations with \cite{nakanovashawyakimov,nakanovashawyakimovII}}
\label{sect:rem}

In \cite{nakanovashawyakimov} the authors propose a classification of thick ideals for the quantum Borel $u_q(B)$ in arbitrary Dynkin type.  The arguments of \cite{nakanovashawyakimov} can be paraphrased as follows: Consider the triangular extension $\supp^{LC}$ of cohomological support to all of $\Stab(u_q(B))$ provided by local cohomology functors \cite{bensoniyengarkrause08}.  In work of Boe-Kujawa-Nakano \cite{boekujawanakano} this extension is proposed to have the following vanishing property:
\begin{equation}\label{eq:bkn}
\supp^{LC}(V\ot M)=\emptyset\ \Rightarrow\ \supp^{LC}(V)\cap\supp^{LC}(M)=\emptyset,
\end{equation}
for finite-dimensional $V$ and arbitrary $M$ \cite[Theorems 6.2.1 \& 6.5.1]{boekujawanakano}.  As one can see in the proof of Proposition \ref{prop:classify}, and as is argued directly in \cite{nakanovashawyakimov}, the implication \eqref{eq:bkn} can be used to conclude that cohomological support for the quantum Borel $u_q(B)$--now in arbitrary Dynkin type--classifies thick ideals in the stable category.  One subsequently identifies the prime ideal spectrum, and can use this identification to conclude that cohomological support satisfies the tensor product property, as is done in \cite{nakanovashawyakimovII}.
\par

While we have no objections to the general theory developed in \cite{nakanovashawyakimov,nakanovashawyakimovII}, and agree that the proposed classification of ideals is obtainable from the implication \eqref{eq:bkn}, we were unable to follow or reproduce a number of arguments from \cite{boekujawanakano}.  Indeed, the current project, along with its predecessor \cite{negronpevtsova}, was undertaken in order to provide a foundation for studies of the quantum group $u_q(\mbb{G})$ which is independent of \cite{boekujawanakano}.  So, in the present document, \emph{we} do not claim a classification of thick ideals for the quantum Borel in arbitrary Dynkin type.  We would encourage, however, all readers to give the paper \cite{boekujawanakano} proper consideration.

\begin{remark}
There is an additional claim from \cite{boekujawanakano,nakanovashawyakimovII} that the analysis of support for the quantum Borel is not sensitive to the choice of grouplikes, and hence not sensitive to centralizing hypotheses on objects.  This is in contrast to the specificity of the analysis given here, as in Section \ref{sect:geom_chev}, Lemma \ref{lem:1043}, Theorem \ref{thm:tpp}.  They arrive at this conclusion because the formula \eqref{eq:bkn} for support is proposed to be independent of the choice of grouplikes.  We make no analogous claim here, and employ in essential ways the centralization hypotheses provided by Lemma \ref{lem:w/e} (see Theorem \ref{thm:tpp} above).
\par

Let us comment, finally, that we have been in contact with the authors of the works \cite{boekujawanakano,nakanovashawyakimov,nakanovashawyakimovII}, and we thank them for continued productive discussions.
\end{remark}

\section{Hypersurface extension for $\Coh(\mcl{G})$}
\label{sect:hyper_G}

Fix now $k$ an \emph{algebraically closed} field.  We provide an analysis of hypersurface support for the algebra of functions $\O(\mcl{G})$ on a (generally non-connected) finite group scheme over $k$.
\par

As in the introduction, we let $\Coh(\mcl{G})=\rep(\O(\mcl{G}))$ denote the category of sheaves on $\mcl{G}$ with tensor structure induced by the group structure on $\mcl{G}$, or rather the Hopf structure on the algebra of functions.  Thus $\QCoh(\mcl{G})$ is identified with the category of arbitrary $\O(\mcl{G})$-modules, with its corresponding monoidal structure.
\par

For coherent sheaves on a general non-connected group scheme $\mcl{G}$, hypersurface support is not multiplicative.  One can see for example the results of \cite{bensonwitherspoon14,plavnikwitherspoon18}, or \cite[Example 10.2]{negronpevtsova}.  However, a sufficiently strong understanding of hypersurface support in this case can still be employed as a foundation for understanding thick ideals in the stable category $\stab(\Coh(\mcl{G}))$.  We use the results of the present section to classify ideals, and identify the prime ideal spectrum for $\Coh(\mcl{G})$ in Section \ref{sect:ideals_G}.

\subsection{Preliminary discussion}
Consider $\mcl{G}$ a finite group scheme, with identity component $\mcl{G}_o$ and subgroup of connected components $\pi=\pi_0(\mcl{G})$.
\par

We take a very specific approach to hypersurface support for $\Coh(\mcl{G})$.  First we note that, for the identity component $\mcl{G}_o$, we understand support completely.  This is because the algebra $\O(\mcl{G}_o)$ is local, and the results of Section \ref{sect:tensor} apply immediately.  Second, we note that sheaves on any other component $\mcl{G}_\lambda$ are identified with sheaves on $\mcl{G}_o$ via the translation action of the corresponding irreducible $\lambda\in \pi$.  So, the behaviors of the category $\Coh(\mcl{G})$ can be understood via an understanding of the connected component $\Coh(\mcl{G}_o)$, and an understanding of the action of the subgroup $\pi$ on $\Coh(\mcl{G})$.
\par

We apply this philosophy to produce (define), and analyze, \emph{a} hypersurface support for sheaves on a generally non-connected finite group scheme.

\begin{remark}
Our approach here deviates from that of the predecessor \cite{negronpevtsova}.
\end{remark}

\subsection{Generalities for $\Coh(\mcl{G})$}
\label{sect:G_gen}

Take $\mcl{G}$, $\mcl{G}_o$, and $\pi=\pi_0(\mcl{G})$ as above, and consider $k$ of odd characteristic $p$.   We cover some generalities for $\Coh(\mcl{G})$, which are not strictly necessary at this point, but which may help orient the reader.
\par

The algebra of functions on the connected component $\mcl{G}_o$ has the form $\O(\mcl{G}_o)=k[x_1,\dots,x_n]/(x_1^{p^{r_1}},\dots, x_n^{p^{r_n}})$ (see, for example, \cite[14.4]{waterhouse}).  This implies that the cohomology ring $\Ext^\ast_{\Coh(\mcl{G})}(k,k)=\Ext^\ast_{\Coh(\mcl{G}_0)}(k,k)$ is a tensor product of an exterior algebra, generated in degree $1$, with a polynomial ring, generated in degree $2$.  We take 
\[
\mcl{L}=(\Ext^2_{\Coh(\mcl{G})}(k,k)_{\rm red})^\ast,
\]
so that the projective spectrum of cohomology is $\msf{Y}=\mbb{P}(\mcl{L})$.
\par

The vector space $\mcl{L}$ can be seen as a twisting of the Lie algebra for $\mcl{G}$.  For example, when $\mcl{G}$ is the $r$-th Frobenius kernel $\mbb{G}_{(r)}$ in a smooth algebraic group $\mbb{G}$, then $\mcl{L}$ is naturally identified with $\operatorname{Lie}(\mcl{G})^{(r)}$.  See for example \cite[Proposition 3.5]{friedlandernegron18}.
\par

We should be clear that, although $\msf{Y}$ is a projective space in this case, the map $\kappa:\msf{Y}\to \mbb{P}(m_Z/m_Z^2)$ induced by a chosen integration $\O\to \O(\mcl{G}_o)$ for the identity component is \emph{not} necessarily an isomorphism.  It is, however, always a closed embedding \cite[Lemma 7.5]{negronpevtsova}.  Indeed, if $\mcl{G}_o\to \mcl{H}$ is an embedding into a connected algebraic group from which we deduce our integration $\O=\O(\mcl{H})\to \O(\mcl{G}_o)$, then $\kappa$ is an isomorphism if and only if $\dim\operatorname{Lie}(\mcl{H})=\dim\operatorname{Lie}(\mcl{G})$.

\subsection{Characteristic 2}
\label{sect:char2}

As usual, characteristic $2$ is somewhat special.  Fix $k$ algebraically closed of characteristic $2$, and let $\mcl G$ be any finite group scheme over $k$ with the identity component $\mcl{G}_o$. As  $\O(\mcl{G}_o)$ is a truncated polynomial algebra, just as in the odd characteristic case, the cohomology ring $\Ext^\ast_{\Coh(\mcl{G}_o)}(k,k)$ is the tensor product of a polynomial ring generated in degrees $1$ and $2$ with an exterior algebra generated in degree $1$ \cite[3.3]{evens91}.  The reduced ring $\Ext^\ast_{\Coh(\mcl{G})}(k,k)_{\rm red}=\Ext^\ast_{\Coh(\mcl{G}_o)}(k,k)_{\rm red}$ is then a polynomial ring generated in degrees $1$ and $2$.  Instead of taking $\mcl{L}$ to be a vector space here, which we identify with an affine $k$-scheme, we take simply
\[
\mcl{L}=\Spec\Ext^\ast_{\Coh(\mcl{G}_o)}(k,k)_{\rm red}
\]
so that again $\msf{Y}=\mbb{P}(\mcl{L})$.  We consider the scheme map $\kappa:\msf{Y}=\mbb{P}(\mcl{L})\to \mbb{P}(m_Z/m_Z^2)$ provided by an integration $\O\to \O(\mcl{G}_o)$ of the identity component.

\begin{lemma}
The map $\kappa:\mbb{P}(\mcl{L})\to \mbb{P}(m_Z/m_Z^2)$, provided by a choice of integration $\O\to \O(\mcl{G}_o)$, is topologically a closed embedding.
\end{lemma}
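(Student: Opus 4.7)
The strategy is to factor $\kappa$ as a universal homeomorphism followed by a scheme-theoretic closed immersion, extending the odd-characteristic argument of \cite[Lemma 7.5]{negronpevtsova} via a Frobenius consideration that accommodates the new degree-$1$ generators of $R_{\rm red}:=\Ext^\ast_{\Coh(\mcl{G}_o)}(k,k)_{\rm red}$ present in characteristic $2$.

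First, I would decompose $\O(\mcl{G}_o)=\bigotimes_i k[x_i]/(x_i^{2^{r_i}})$ and apply the K\"unneth formula to write $R_{\rm red}=k[Y_1,\dots,Y_a,Z_1,\dots,Z_b]$ as a polynomial ring, where each $Y_i$ in degree $1$ comes from a factor with $r_i=1$ (recalling that $\Ext^\ast(k[x]/(x^2),k)=k[Y]$ is polynomial in characteristic $2$, so $Y_i^2\neq 0$) and each $Z_j$ in degree $2$ comes from a factor with $r_j\geq 2$. Let $S:=k[Y_i^2,Z_j]\subset R_{\rm red}$, a polynomial subring concentrated in even degrees. Since each $Y_i^2\in S$, the inclusion $S\subset R_{\rm red}$ is a finite purely inseparable ring extension; hence $\Spec R_{\rm red}\to \Spec S$ is a universal homeomorphism. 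Purely inseparable extensions have a unique prime lying over each prime of the base, and together with finiteness this transfers to the projective spectra, so $\Proj R_{\rm red}\to \Proj S$ is a homeomorphism.

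Next, I would show that the graded ring map $\phi:A_Z\to R_{\rm red}$ inducing $\kappa$ factors through $S$, i.e.\ that the image of $\phi$ avoids the ``cross-term'' directions $\langle Y_iY_j:i\neq j\rangle\subset R^2_{\rm red}$. Since $A_Z=\Sym(\Sigma^{-2}(m_Z/m_Z^2)^\ast)$ is generated in degree $2$, it suffices to verify this for the linear map $\phi_2:(m_Z/m_Z^2)^\ast\to R^2_{\rm red}$. This should follow from compatibility of the deformation class with the K\"unneth decomposition: the obstruction attached to the defining relation $x_i^{2^{r_i}}=0$ in a single one-variable factor is (up to scalar) the Frobenius/Bockstein class $Y_i^2$ when $r_i=1$ and the genuine degree-$2$ generator $Z_i$ when $r_i\geq 2$, so each deformation class lives in $S^2$.

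Once $\phi$ is known to factor as $A_Z\to S\hookrightarrow R_{\rm red}$, the map $A_Z\to S$ is a graded homomorphism of polynomial algebras both generated in degree $2$, and the argument of \cite[Lemma 7.5]{negronpevtsova} applies verbatim in characteristic $2$ to show that the induced linear map $(m_Z/m_Z^2)^\ast\to S^2$ is surjective; equivalently, the dual gives a linear embedding $\Spec S\hookrightarrow m_Z/m_Z^2$ of vector schemes, so $\Proj S\hookrightarrow\mbb{P}(m_Z/m_Z^2)$ is a closed immersion. Composing with the homeomorphism $\Proj R_{\rm red}\to\Proj S$ of the previous paragraph then realizes $\kappa$ as a topologically closed embedding. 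The main obstacle is the middle step: verifying that for a general integration $\O(\mcl H)\to\O(\mcl{G}_o)$ arising from an embedding $\mcl{G}_o\hookrightarrow\mcl{H}$ into a smooth connected algebraic group, the deformation classes really do avoid the off-diagonal span and land in the Frobenius subring $S$. This requires an explicit K\"unneth/bar-construction computation of the deformation classes and some care with how Bocksteins and the Frobenius interact in characteristic $2$.
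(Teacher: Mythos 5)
Your scaffolding is mostly sound (the algebra decomposition of $\O(\mcl{G}_o)$, the computation of $R_{\rm red}$, and the fact that the finite purely inseparable extension $S=k[Y_i^2,Z_j]\subset R_{\rm red}$ induces a homeomorphism on $\Proj$), but the step you yourself flag as the ``main obstacle'' is a genuine gap, not a deferred routine verification, and the whole proof hinges on it. The claim that the deformation map $A_Z\to R_{\rm red}$ factors through $S$ is exactly the content that would need proving: the decomposition $\O(\mcl{G}_o)\cong\bigotimes_i k[x_i]/(x_i^{2^{r_i}})$ is only an algebra isomorphism, is not canonical, and has no a priori compatibility with an arbitrary embedding $\mcl{G}_o\hookrightarrow \mcl{H}$, so there is no evident reason the images of $(m_Z/m_Z^2)^\ast$ avoid the cross-terms $Y_iY_j$. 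Nor can this be bypassed by finiteness of $A_Z\to R_{\rm red}$ plus a generic characteristic-$2$ argument: for $B=k[\,Y_1^2,\ Y_1Y_2+Y_2^2\,]\subset k[Y_1,Y_2]$ the extension is finite, graded, and generated in degree $2$, yet $[1{:}0]$ and $[1{:}1]$ have the same image in $\Proj B$, so injectivity genuinely requires special structure of the image of $A_Z$. (Conversely, if you could prove the factorization through $S$, the rest would be easier than you indicate: since $S$ is a polynomial ring on the variables $Y_i^2,Z_j$, finiteness of $A_Z\to R_{\rm red}$ alone already forces the degree-two image to be all of $S^2$, with no need to invoke the argument of \cite{negronpevtsova}.) As written, then, the proposal is incomplete at its load-bearing step, and it is not even clear that the asserted factorization holds for an arbitrary integration.

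For comparison, the paper's proof is entirely soft and involves no computation of deformation classes: since $\O(\mcl{G}_o)$ is local of type (F4), its hypersurface algebras satisfy the thick subcategory lemma (via \cite{takahashi10}), so hypersurface support is a lavish support theory and classifies thick ideals in $\stab(\O(\mcl{G}_o))$ by Corollary \ref{cor:examples} and Theorem \ref{thm:classify}; Lemma \ref{lem:class_kap} then converts classification of ideals, together with sub-multiplicativity of cohomological support, into injectivity of $\kappa$ on closed points, hence the topological closed embedding. If you want to salvage a computational route you must actually establish your Frobenius/Bockstein factorization claim for a general integration $\O(\mcl{H})\to\O(\mcl{G}_o)$; otherwise the tensor-triangular argument is the way the statement is actually proved.
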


\begin{proof}
This follows by Corollary \ref{cor:examples} and Lemma \ref{lem:class_kap}, applied to $\msf{u}=\O(\mcl{G}_o)$.
\end{proof}

As a sanity check, consider the case where $\mcl{G}$ is the first Frobenius kernel $\mbb{G}_{(1)}$ in a smooth algebraic group $\mbb{G}$.  We then have $E=\Ext^\ast_{\mcl{G}}(k,k)=k[y_1,\dots, y_n]$, with the $y_i$ of degree $1$.  For the natural integration $\O(\mbb{G})\to \O(\mcl{G})$ provided by the embedding $\mcl{G}\to \mbb{G}$, the deformation map $A_Z\to E$ is an isomorphism onto the subalgebra $E'=k[y_1^2,\dots, y_n^2]$.  The above lemma now claims that the map on spectra
\[
\Spec E\to \Spec E',
\]
which is closed and surjective, is a homeomorphism.  It suffices to show that this map is injective on closed points.  However, such injectivity just follows from the fact that there is a unique solution to any equation $y^2-c=0$, $c\in k$, over algebraically closed field in characteristic 2.

\subsection{Hypersurface support for non-connected group schemes}

Consider $\mcl{G}$ a non-connected finite group scheme, with identity component $\mcl{G}_o$ and group of connected components $\pi\subset \mcl{G}$.  We define a support theory $(\msf{Y},\supp^{hyp}_\mbb{P})$ for $\mcl{G}$ as follows: Consider an integration $\O\to \O(\mcl{G}_o)$ as in (\hyperref[it:F4]{F4}), with its associated hypersurface support, and take for $M$ in $\QCoh(\mcl{G})$
\begin{equation}\label{eq:suppG}
\supp^{hyp}_\mbb{P}(M):=\bigcup_{\lambda\in \pi}\supp^{hyp}_\mbb{P}(M\ot\lambda\!\ |_{\mcl{G}_o}),
\end{equation}
where in the right hand expression $\supp^{hyp}_\mbb{P}$ denotes the usual hypersurface support for $\mcl{G}_o$, and $(-)|_{\mcl{G}_o}$ is the restriction functor to the open subscheme $\mcl{G}_o\subset \mcl{G}$.  We simply refer to $\supp^{hyp}_\mbb{P}$ as the hypersurface support for $\mcl{G}$, defined relative to a chosen integration $\O\to \O(\mcl{G}_o)$ of the identity component.
\par

To unravel this definition, any $M$ over $\mcl{G}$ is a sum of sheaves supported on the various components $\mcl{G}_\lambda=\mcl{G}_o\cdot \lambda$.  We may adopt an expression $M\cong \oplus_{\lambda\in \pi}M_\lambda\ot \lambda^{-1}$ for $M_\lambda=M\ot \lambda|_{\mcl{G}_o}$.  The support of $M$ over $\mcl{G}$ is then the union of the supports of its various components $M_\lambda$ over $\mcl{G}_o$.  We have the following basic observation.

\begin{lemma}\label{lem:837}
For $V$ in $\Coh(\mcl{G})$ hypersurface support $\supp^{hyp}_\mbb{P}(V)$ agrees with the usual cohomological support,
\[
\supp^{hyp}_\mbb{P}(V)=\Supp_\msf{Y}\Ext^\ast_{\Coh(\mcl{G})}(k,\oplus_{\lambda\in \pi}V\ot\lambda)^\sim.
\]
\end{lemma}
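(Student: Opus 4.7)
The plan is to reduce the claim to the corresponding statement for the connected component $\mcl{G}_o$, and then exploit that $\O(\mcl{G})$ splits as a product over its connected components. Unpacking the definition \eqref{eq:suppG} immediately gives
\[
\supp^{hyp}_\mbb{P}(V) = \bigcup_{\lambda \in \pi} \supp^{hyp}_\mbb{P}(V \otimes \lambda|_{\mcl{G}_o}),
\]
where each term on the right is hypersurface support for the \emph{connected} group scheme $\mcl{G}_o$ (an instance of family (\hyperref[it:F4]{F4})). For such a connected piece, Proposition \ref{prop:538}, combined with the standard commutative-algebra identification of $\Supp\Ext^*_R(W,W)^\sim$ with $\Supp\Ext^*_R(k,W)^\sim$ over the local augmented ring $R = \O(\mcl{G}_o)$, yields the identity $\supp^{hyp}_\mbb{P}(W) = \Supp_\msf{Y}\Ext^*_{\Coh(\mcl{G}_o)}(k, W)^\sim$ for every $W \in \Coh(\mcl{G}_o)$.

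Next, since $\O(\mcl{G}) = \prod_{\lambda \in \pi} \O(\mcl{G}_\lambda)$ as $k$-algebras, the abelian category $\Coh(\mcl{G})$ decomposes as a product of the component subcategories $\Coh(\mcl{G}_\lambda)$, and the tensor unit $k$ (the skyscraper at the identity point) lies entirely in the identity-component factor. Consequently, for any $N \in \QCoh(\mcl{G})$ the natural map induces an isomorphism
\[
\Ext^*_{\Coh(\mcl{G})}(k, N) \;\cong\; \Ext^*_{\Coh(\mcl{G}_o)}(k, N|_{\mcl{G}_o}),
\]
equivariant for the actions of $\Ext^*_{\Coh(\mcl{G})}(k,k) = \Ext^*_{\Coh(\mcl{G}_o)}(k,k)$ (which coincide for the same reason).

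Finally, taking $N = \bigoplus_\lambda V \otimes \lambda$ and noting that $N|_{\mcl{G}_o} = \bigoplus_\lambda (V \otimes \lambda)|_{\mcl{G}_o}$, the fact that the support of a direct sum of sheaves is the union of the individual supports gives
\[
\Supp_\msf{Y}\Ext^*_{\Coh(\mcl{G})}(k, N)^\sim = \bigcup_{\lambda \in \pi} \Supp_\msf{Y}\Ext^*_{\Coh(\mcl{G}_o)}(k, V\otimes\lambda|_{\mcl{G}_o})^\sim,
\]
and by the first reduction this agrees with $\supp^{hyp}_\mbb{P}(V)$. The only nonformal input is the local-commutative-algebra identification in the first step; once that is in hand, the rest of the argument is essentially bookkeeping about how $\Coh(\mcl{G})$ factors over components.
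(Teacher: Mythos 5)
Your overall route is the same as the paper's: unravel the definition \eqref{eq:suppG} to reduce to the identity component, use the identification $\Ext^\ast_{\Coh(\mcl{G})}(k,-)\cong\Ext^\ast_{\Coh(\mcl{G}_o)}(k,-|_{\mcl{G}_o})$, and then invoke the connected case. But there is a genuine gap in the connected-case step. Proposition \ref{prop:538} does not give $\supp^{hyp}_\mbb{P}(W)=\Supp_\msf{Y}\Ext^\ast_{\Coh(\mcl{G}_o)}(W,W)^\sim$; it gives $\supp^{hyp}_\mbb{P}(W)=\kappa\big(\Supp_\msf{Y}\Ext^\ast_{\Coh(\mcl{G}_o)}(W,W)^\sim\big)$, a subset of $\mbb{P}(m_Z/m_Z^2)$, whereas the right-hand side of the lemma is a subset of $\msf{Y}$. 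The asserted equality therefore presupposes that $\kappa:\msf{Y}\to\mbb{P}(m_Z/m_Z^2)$ is \emph{topologically a closed embedding} for the local algebra $\O(\mcl{G}_o)$, so that $\msf{Y}$ may be identified with $\kappa(\msf{Y})$. You never state or justify this, and it is not formal: the paper's proof rests precisely on this fact, quoted from \cite[Lemma 10.4]{negronpevtsova}, and the characteristic $2$ case requires the separate argument of Section \ref{sect:char2} (via Corollary \ref{cor:examples} and Lemma \ref{lem:class_kap}). Without injectivity of $\kappa$, your chain of identifications only shows that $\supp^{hyp}_\mbb{P}(V)$ is the $\kappa$-image of the cohomological support, which is strictly weaker than the lemma and would not suffice for its later uses (e.g.\ Lemma \ref{lem:1495}, where the $\pi$-action on $\msf{Y}$ is transported to hypersurface support through this identification).

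A secondary, more minor point: the ``standard commutative-algebra identification'' $\Supp\Ext^\ast_R(W,W)^\sim=\Supp\Ext^\ast_R(k,W)^\sim$ over $R=\O(\mcl{G}_o)$ should be stated for the module structure actually used in the lemma, namely the (right-handed) tensor action of $\Ext^\ast_{\Coh(\mcl{G}_o)}(k,k)$, not only for the action of cohomology operators on $\Ext$ over the complete intersection $R$ in the sense of \cite{avramovbuchweitz00}; since $\O(\mcl{G}_o)$ is local this comparison does go through (and locality is essential -- it is exactly why the simples $\lambda$ must be summed over on the right-hand side for non-connected $\mcl{G}$), but it deserves a sentence. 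The missing closed-embedding input for $\kappa$ is the substantive omission.
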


Note that we are adopting a ``right-handed" interpretation of cohomological support here.

\begin{proof}
This follows from the identification $\Ext^\ast_{\Coh(\mcl{G})}(k,-)=\Ext^\ast_{\Coh(\mcl{G}_o)}(k,-|_{\mcl{G}_o})$ and the fact that the map $\kappa:\msf{Y}\to \mbb{P}(m_Z/m_Z^2)$ is a closed embedding in this case \cite[Lemma 10.4]{negronpevtsova}, so that hypersurface support for the connected component $\Coh(\mcl{G}_o)$ agrees with cohomological support.
\end{proof}

As projectivity of $M$ is equivalent to projectivity of the collective summands $M_\lambda$, the detection theorem, Theorem \ref{thm:detec}, for $\mcl{G}_o$ implies that the hypersurface support detects projectivity for $\mcl G$. 

\begin{theorem}\label{thm:detectG}
For $\mcl{G}$ any finite group scheme, and $M$ in $\QCoh(\mcl{G})$, $\supp^{hyp}_\mbb{P}(M)=\emptyset$ if and ony if $M$ is projective in $\QCoh(\mcl{G})$, or equivalently vanishes in the stable category.
\end{theorem}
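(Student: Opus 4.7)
The plan is to reduce the statement to the detection theorem for the identity component, Theorem~\ref{thm:detec}, by decomposing $M$ into pieces supported on the individual connected components of $\mcl{G}$ and invoking hypersurface detection on each piece separately.

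Concretely, since the component group $\pi=\pi_0(\mcl{G})$ is finite and discrete, $\O(\mcl{G})$ splits as a product of the component algebras $\O(\mcl{G}_\lambda)$, and any $M$ in $\QCoh(\mcl{G})$ decomposes accordingly as $M\cong\bigoplus_{\lambda\in\pi}M_\lambda$. Projectivity of $M$ over $\O(\mcl{G})$ is therefore equivalent to the joint projectivity of the $M_\lambda$ over the respective algebras $\O(\mcl{G}_\lambda)$. Each component $\mcl{G}_\lambda$ is isomorphic to $\mcl{G}_o$ via left translation by $\lambda^{-1}$, and on sheaves this translation is implemented by tensoring with the one-dimensional $\O(\mcl{G})$-module associated to $\lambda$. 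Thus the collection $\{M_\lambda\}_{\lambda\in\pi}$ is canonically identified, up to reindexing, with the collection of restrictions $\{(M\ot\lambda)|_{\mcl{G}_o}\}_{\lambda\in\pi}$, and projectivity of $M$ over $\mcl{G}$ is equivalent to projectivity of each $(M\ot\lambda)|_{\mcl{G}_o}$ over $\mcl{G}_o$.

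With that reduction in place, Theorem~\ref{thm:detec} applied to the integrable Hopf algebra $\O(\mcl{G}_o)$ yields that each $(M\ot\lambda)|_{\mcl{G}_o}$ is projective if and only if $\supp^{hyp}_\mbb{P}((M\ot\lambda)|_{\mcl{G}_o})=\emptyset$. Taking the union of these supports over $\lambda\in\pi$ and comparing with the definition~\eqref{eq:suppG}, one finds that $\supp^{hyp}_\mbb{P}(M)=\emptyset$ if and only if every $(M\ot\lambda)|_{\mcl{G}_o}$ is projective over $\mcl{G}_o$, which by the previous paragraph is equivalent to projectivity of $M$ over $\mcl{G}$. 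The equivalence with vanishing in $\Stab(\Coh(\mcl{G}))$ is then automatic from the definition of the stable category.

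No step in this argument is genuinely difficult; the one point requiring minor care is the identification of the component sheaves $M_\lambda$ with the tensor-and-restrict operations $(M\ot\lambda)|_{\mcl{G}_o}$. This is ultimately a consequence of the translation action of $\pi$ on $\mcl{G}$ being simply transitive on components, but it is the only place where the Hopf (rather than purely associative) structure on $\O(\mcl{G})$ enters the proof.
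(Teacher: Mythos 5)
Your proof is correct and follows essentially the same route as the paper: the paper likewise unravels the definition \eqref{eq:suppG} by writing $M\cong\oplus_{\lambda\in\pi}M_\lambda\ot\lambda^{-1}$ with $M_\lambda=M\ot\lambda|_{\mcl{G}_o}$, observes that projectivity of $M$ over $\mcl{G}$ is equivalent to projectivity of all the $M_\lambda$ over $\mcl{G}_o$, and then applies the detection theorem, Theorem~\ref{thm:detec}, to the integrable local Hopf algebra $\O(\mcl{G}_o)$. No discrepancies to flag.
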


The tensor product property for $\QCoh(\mcl{G}_o)$, expressed in Theorem \ref{thm:tpp}, provides the following in the non-connected context.

\begin{theorem}[{cf.\ \cite[Corollary 10.8]{negronpevtsova}}]\label{thm:tppG}
For $\mcl{G}$ an arbitrary finite group scheme, $M$ in $\QCoh(\mcl{G})$, and $V$ centralizing the simples in $\Coh(\mcl{G})$, we have
\[
\supp^{hyp}_\mbb{P}(M\ot V)=\supp^{hyp}_\mbb{P}(M)\cap\supp^{hyp}_\mbb{P}(V).
\]
\end{theorem}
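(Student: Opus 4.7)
The plan is to reduce the desired tensor product property for $\mcl{G}$ to the tensor product property for the identity component $\mcl{G}_o$, already in hand via Theorem~\ref{thm:tpp_loc} applied to the local integrable Hopf algebra $\O(\mcl{G}_o)$ (whose hypersurfaces satisfy the thick subcategory Lemma by Proposition~\ref{prop:351}). The reduction proceeds by decomposing $M$ and $V$ along $\pi = \pi_0(\mcl{G})$ and then using the centralizing structure on $V$ to cancel a Hopf-algebraic adjoint twist that arises from tensoring components across different cosets.

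First I would decompose $M = \bigoplus_\mu M_\mu$ and $V = \bigoplus_\nu V_\nu$ according to $\O(\mcl{G}) = \bigoplus_\lambda \O(\mcl{G}_\lambda)$. After fixing lifts $\hat\lambda \in \mcl{G}(k)$ for each $\lambda \in \pi$ (possible since $k$ is algebraically closed), write $M_\mu = M_{[\mu]} \ot \hat\mu$ and $V_\nu = V_{[\nu]} \ot \hat\nu$ with $M_{[\mu]}, V_{[\nu]}$ sheaves on $\mcl{G}_o$. Directly from definition~\eqref{eq:suppG} this yields $\supp^{hyp}_\mbb{P}(M) = \bigcup_\mu \supp^{hyp}_\mbb{P}(M_{[\mu]})$, and similarly for $V$. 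A direct computation with the coproduct on $\O(\mcl{G})$ then establishes the natural isomorphism
\[
\hat\mu \ot N \ \cong\ (\operatorname{Ad}_{\hat\mu}^* N) \ot \hat\mu \qquad \text{in } \Coh(\mcl{G}),
\]
for any sheaf $N$ on $\mcl{G}_o$, where $\operatorname{Ad}_{\hat\mu}$ denotes conjugation by $\hat\mu$ on the normal subgroup $\mcl{G}_o$. Inserting this identity into $M_{[\mu]} \ot \hat\mu \ot V_{[\nu]} \ot \hat\nu$ and collapsing $\hat\mu \ot \hat\nu \cong \hat{\mu\nu}$ yields, after translating back to $\mcl{G}_o$,
\[
(M \ot V)_{[\sigma]} \ \cong\ \bigoplus_{\mu\nu = \sigma} M_{[\mu]} \ot_{\Coh(\mcl{G}_o)} \operatorname{Ad}_{\hat\mu}^*(V_{[\nu]}).
\]

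Because each $\operatorname{Ad}_{\hat\mu}^* V_{[\nu]}$ is finite-dimensional, applying Theorem~\ref{thm:tpp_loc} summand by summand gives an intersection formula for the support of each $(M \ot V)_{[\sigma]}$. The centralizing structure on $V$ enters at this point: unwinding the half-braiding $\gamma_{\hat\mu} \colon V \ot \hat\mu \xrightarrow{\sim} \hat\mu \ot V$ componentwise produces the identification
\[
\operatorname{Ad}_{\hat\mu}^*(V_{[\nu]}) \ \cong\ V_{[\mu\nu\mu^{-1}]} \qquad \text{in } \Coh(\mcl{G}_o).
\]
Assembling and taking the union over $\sigma \in \pi$,
\[
\supp^{hyp}_\mbb{P}(M \ot V) \ =\ \bigcup_{\mu,\,\nu\,\in\,\pi} \supp^{hyp}_\mbb{P}(M_{[\mu]}) \cap \supp^{hyp}_\mbb{P}(V_{[\mu\nu\mu^{-1}]}),
\]
and since for each fixed $\mu$ the map $\nu \mapsto \mu\nu\mu^{-1}$ is a bijection of $\pi$, the inner union recovers $\supp^{hyp}_\mbb{P}(V)$ and the outer union recovers $\supp^{hyp}_\mbb{P}(M)$, yielding the claimed equality.

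The main obstacle I expect will be the careful bookkeeping for the two Hopf-algebraic identifications invoked above, namely the swap isomorphism $\hat\mu \ot N \cong \operatorname{Ad}_{\hat\mu}^* N \ot \hat\mu$ and the componentwise form of the half-braiding on $V$; both are direct coproduct computations, but they require committing to a definite translation convention (right translation throughout, say) and tracking it consistently through the associators. The structural point of the argument, once this is done, is that the $\pi$-conjugation twist produced by the noncommutativity of the tensor structure on $\Coh(\mcl{G})$ is precisely cancelled by the centralizing isomorphisms on $V$, so that no $\pi$-invariance hypothesis need be imposed on the supports of the individual components of $M$.
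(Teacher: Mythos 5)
Your proposal is correct and follows essentially the same route as the paper's proof: decompose $M$ and $V$ along the components indexed by $\pi$, use the centralizing structure on $V$ to absorb the adjoint twist created by moving group-like factors across, apply the tensor product property over the connected component $\mcl{G}_o$ (Theorem \ref{thm:tpp_loc}) summand by summand, and conclude by distributing unions over intersections, using that conjugation permutes the components of $V$. The only cosmetic differences are your explicit bookkeeping with chosen lifts $\hat\lambda$ and your citation of the local version \ref{thm:tpp_loc} rather than \ref{thm:tpp}, which matches what the paper does implicitly.
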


\begin{proof}
Consider an appropriate integration $\O\to \O(\mcl{G}_o)$ and any hypersurface $\O_c$.  By changing base we may assume that $c$ is a $k$-point.  We write $M\cong\oplus_\lambda M_\lambda\ot \lambda^{-1}$ and $V\cong\oplus_\lambda V_\lambda\ot\lambda^{-1}$.  We are checking the hypersurface support, over $\mcl{G}_o$, of the sum
\[
\oplus_{\lambda,\mu} (M_\lambda\ot \lambda^{-1}\ot V_{\mu\lambda^{-1}}\ot \lambda\mu^{-1})\ot\mu\cong \oplus_{\lambda,\mu} M_\lambda\ot \lambda^{-1}\ot V_\mu\ot \lambda.
\]
The centralizing hypotheses on $V$ imply that the above sum is isomorphic to
\[
\oplus_{\lambda,\mu}M_\lambda\ot V_{\operatorname{Ad}_\lambda(\mu)}=\oplus_{\lambda,\mu}M_\lambda\ot V_\mu.
\]
Now, we apply the tensor product property for $\Coh(\mcl{G}_o)$, provided by Theorem \ref{thm:tpp}, to find
\[
\begin{array}{rl}
\supp^{hyp}_\mbb{P}(\oplus_{\lambda,\mu}M_\lambda\ot V_\mu)& = \cup_{\lambda,\mu}\big(\supp^{hyp}_\mbb{P}(M_\lambda)\cap\supp^{hyp}_\mbb{P}(V_\mu)\big)\\
& = \big(\cup_\lambda\supp(M_\lambda)\big)\cap\big(\cup_\mu\supp^{hyp}_\mbb{P}(V_\mu)\big)\\
&=\supp^{hyp}_\mbb{P}(M)\cap\supp^{hyp}_\mbb{P}(V).
\end{array}
\]
\end{proof}

\section{Thick ideals and spectra for stable $\Coh(\mcl{G})$}
\label{sect:ideals_G}

We continue our analysis of thick ideals and support for sheaves $\Coh(\mcl{G})$ on a finite group scheme $\mcl{G}$.  We adopt categorical notations for the stable categories $\stab(\Coh(\mcl{G}))$ and $\Stab(\Coh(\mcl{G}))$, and take also
\[
\cSpec(\Coh(\mcl{G})):=\cSpec(\stab(\Coh(\mcl{G})))
\]
(see Definition \ref{def:prime}).  We maintain our assumption $k=\bar{k}$ through subsection \ref{sect:k=bark} below.  As explained in Remark \ref{rem:comm}, thick ideals in the stable category for $\Coh(\mcl{G})$ at \emph{connected} $\mcl{G}$ are classified via work of Stevenson \cite{stevenson13}.  So our main contributions, and our primary examples of interest, are to categories of sheaves on non-connected $\mcl{G}$.
\par

At the conclusion of the section we discuss central generation of ideals in the stable category for $\Coh(\mcl{G})$.  We note that these central generation questions are not reducible to earlier works in the field, even for connected $\mcl{G}$.

\subsection{Preliminary discussion}
Consider $\mcl{G}$ a finite group scheme over an algebraically closed base field $k$, with subgroup $\pi=\pi_0(\mcl{G})$ of connected components.  Take $\mcl{L}=(\Ext^2_{\Coh(\mcl{G})}(k,k)_{\rm red})^\ast$, or $\mcl{L}=\Spec\Ext^\ast_{\Coh(\mcl{G})}(k,k)_{\rm red}$ in characteristic $2$, so that $\msf{Y}=\mbb{P}(\mcl{L})$.
\par

As we have already discussed, in preamble to Section \ref{sect:hyper_G} for example, hypersurface support for $\Coh(\mcl{G})$ is generally ``bad".  Rather, cohomological support for $\Coh(\mcl{G})$ is known to not be multiplicative in general \cite{plavnikwitherspoon18}, and cohomological support is identified with hypersurface support via Lemma \ref{lem:837}.  The problem is that we have no containment
\[
\supp_\mbb{P}^{hyp}(V\ot W)\nsubseteq \supp_\mbb{P}^{hyp}(V)\cap\supp_\mbb{P}^{hyp}(W)
\]
in general.  This is equivalent to the failure of support to be stable under duality $\supp_\mbb{P}^{hyp}(V)\neq \supp_\mbb{P}^{hyp}(V^\ast)$.
\par

Having been presented with this tragic situation, we consider the quotient $p:\mbb{P}(\mcl{L})\to \mbb{P}(\mcl{L})/\!\sim$ of the projective space $\mbb{P}(\mcl{L})$ by the desired relations on closed subsets $\supp_\mbb{P}^{hyp}(V)\sim\supp_\mbb{P}^{hyp}(V^\ast)$.  (It turns out that the quotient $\mbb{P}(\mcl{L})/\!\sim$ is the quotient $\mbb{P}(\mcl{L})/\pi$ of $\mbb{P}(\mcl{L})$ by the adjoint action of the subgroup of connected components.)  We show that the resultant support theory $p\circ \supp_\mbb{P}$, which now takes values in this quotient $\mbb{P}(\mcl{L})/\sim$, is in fact multiplicative and classifies thick ideals in the stable category $\stab(\Coh(\mcl{G}))$.

\begin{remark}
Our presentation below does not follow, in a direct manner, the philosophy suggested above.  However, the constructions are equivalent.
\end{remark}

\subsection{Main result}

Take $\mcl{G}$, $\pi$, and $\mcl{L}$ as above.  The subgroup $\pi\subset \mcl{G}$ is identified with the group of irreducible objects in $\Coh(\mcl{G})$, and we have the corresponding adjoint action on the category
\[
\operatorname{Ad}_?:\pi\to \Aut^{\ot}(\Coh(\mcl{G})),\ \ \operatorname{Ad}_\lambda=\lambda\ot-\ot \lambda^{-1}.
\]
This action induces a (generally nontrivial) action on the projective spectrum of cohomology $\mbb{P}(\mcl{L})$.  We consider the quotient $\mbb{P}(\mcl{L})/\pi=_{top}\mbb{P}(\mcl{L}/\pi)$ via this adjoint action.
\par

Consider $p:\mbb{P}(\mcl{L})\to \mbb{P}(\mcl{L}/\pi)$ the quotient map.  Hypersurface support for $\Coh(\mcl{G})$, defined relative to a fixed integration $\O\to \O(\mcl{G}_o)$ of the connected component as in \eqref{eq:suppG}, now induces a support theory which takes values in $\mbb{P}(\mcl{L}/\pi)$.  We take specifically
\begin{equation}\label{eq:1847}
\supp_{\mcl{L}/\pi}:=p\circ\supp_{\mbb{P}}^{hyp}.
\end{equation}
We note that $\supp_{\mcl{L}/\pi}$ respects the triangulated structure on $\Stab(\Coh(\mcl{G}))$, in the precise sense of Section \ref{sect:supports}, as it inherits the necessary compatibilities from $\supp^{hyp}_\mbb{P}$.

\begin{remark}
The space $\mcl{L}/\pi$ is a conical variety which is not necessarily isomorphic to an affine space, and its projectivization $\mbb{P}(\mcl{L}/\pi)$ is therefore not necessarly a projective space.  We should also be clear that the theory $\supp_{\mcl{L}/\pi}$ still takes values in the \emph{projective} variety $\mbb{P}(\mcl{L}/\pi)$, not its affine counterpart $\mcl{L}/\pi$.  We omit the projectivization from the subscript for the sake of legibility.
\end{remark}

The point of this section is to prove the following result, which involves some relaxation of our hypotheses on the base field $k$.

\begin{theorem}\label{thm:G_classification}
Let $k$ be a perfect field, and $\mcl{G}$ be an arbitrary finite group scheme over $k$.  The support theory $(\mbb{P}(\mcl{L}/\pi),\supp_{\mcl{L}/\pi})$, defined as above, classifies thick ideals in $\stab(\Coh(\mcl{G}))$.  Furthermore, there is a homeomorphism
\[
\mbb{P}(\mcl{L}/\pi)\overset{\cong}\longrightarrow \cSpec(\Coh(\mcl{G})).
\]
\end{theorem}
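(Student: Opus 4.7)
The plan is to show that $(\mbb{P}(\mcl{L}/\pi),\supp_{\mcl{L}/\pi})$ is a lavish support theory for $\stab(\Coh(\mcl{G}))$ in the sense of Definition \ref{def:lavish}, and then to deduce both assertions from Theorems \ref{thm:classify} and \ref{thm:chevalley_balmer}. For those general theorems to apply one needs $\Coh(\mcl{G})$ to have the Chevalley property, which it does: the simple objects are the skyscrapers at the reduced points of $\mcl{G}$, and these collectively form the tensor subcategory $\Coh(\pi)$. First I would reduce to the case $k=\bar{k}$ by faithfully flat descent along $k\to\bar{k}$, invoking perfectness to ensure that $\pi$ base-changes correctly and that thick ideals, supports, and the prime spectrum pass faithfully between $\Coh(\mcl{G})$ and $\Coh(\mcl{G}_{\bar{k}})$.

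For lavishness I would verify three conditions in turn. Exhaustiveness follows from the exhaustiveness of cohomological support for $\Coh(\mcl{G}_o)$, identified with hypersurface support via Lemma \ref{lem:837}: any closed $\bar{\Theta}\subset \mbb{P}(\mcl{L}/\pi)$ lifts to a closed subset of $\mbb{P}(\mcl{L})$, realized as $\supp^{hyp}_\mbb{P}(V_o)$ for some $V_o\in \stab(\Coh(\mcl{G}_o))$, and viewed as a sheaf on $\mcl{G}$ concentrated on the identity component $V_o$ then satisfies $\supp_{\mcl{L}/\pi}(V_o)=\bar{\Theta}$. Faithfulness of the tensor extension to $\Stab(\Coh(\mcl{G}))$ is the content of the detection theorem \ref{thm:detectG}. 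The general containment of supports under tensor product, on both the small and big stable category, is inherited from Lemma \ref{lem:weak} on $\mbb{P}(\mcl{L})$ by projecting through $p$.

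The substantive step is the equality $\supp_{\mcl{L}/\pi}(V\otimes M)=\supp_{\mcl{L}/\pi}(V)\cap \supp_{\mcl{L}/\pi}(M)$ when $V$ centralizes the simples. Theorem \ref{thm:tppG} gives the corresponding identity upstairs,
\[
\supp^{hyp}_\mbb{P}(V\otimes M)=\supp^{hyp}_\mbb{P}(V)\cap \supp^{hyp}_\mbb{P}(M),
\]
and one needs to promote this to an equality in the quotient, where in general $p(A\cap B)\subsetneq p(A)\cap p(B)$. However, the latter is an equality whenever one of $A$ or $B$ is $\pi$-invariant, so it suffices to show that $\supp^{hyp}_\mbb{P}(V)$ is $\pi$-invariant whenever $V$ centralizes the simples. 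This holds because a centralizing structure gives $V\cong \lambda\otimes V\otimes \lambda^{-1}=\operatorname{Ad}_\lambda(V)$ for every $\lambda\in \pi$, which together with the intertwining $\supp^{hyp}_\mbb{P}(\operatorname{Ad}_\lambda V)=\operatorname{Ad}_\lambda \supp^{hyp}_\mbb{P}(V)$ between the $\pi$-action on objects and the defining $\pi$-action on $\mbb{P}(\mcl{L})$ yields the required invariance. With lavishness established, Theorems \ref{thm:classify} and \ref{thm:chevalley_balmer} deliver the classification of thick ideals and the homeomorphism $\mbb{P}(\mcl{L}/\pi)\cong \cSpec(\Coh(\mcl{G}))$ respectively. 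The hard part will be precisely this bridge between centralization and $\pi$-equivariance, which is what forces the non-multiplicative theory on $\mbb{P}(\mcl{L})$ to become multiplicative on the quotient; a secondary technical hurdle is the descent from $\bar{k}$ back to a general perfect $k$.
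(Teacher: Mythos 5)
Your overall strategy coincides with the paper's (show the quotient theory is lavish, then quote Theorems \ref{thm:classify} and \ref{thm:chevalley_balmer}), and your key step is the same as the paper's: $\supp^{hyp}_\mbb{P}(V)$ is $\pi$-stable when $V$ centralizes the simples, which promotes the equality of Theorem \ref{thm:tppG} to an equality in $\mbb{P}(\mcl{L}/\pi)$. However, there is a genuine gap in your treatment of sub-multiplicativity. You assert that the containment $\supp_{\mcl{L}/\pi}(M\ot V)\subset\supp_{\mcl{L}/\pi}(M)\cap\supp_{\mcl{L}/\pi}(V)$ for arbitrary $V$ is ``inherited from Lemma \ref{lem:weak} on $\mbb{P}(\mcl{L})$ by projecting through $p$.'' Lemma \ref{lem:weak} does not apply here: it requires a conormal or local integration of the Hopf algebra itself, and for non-connected $\mcl{G}$ the algebra of functions is neither local nor equipped with such an integration; the support \eqref{eq:suppG} is defined component-wise relative to an integration of the identity component only. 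Worse, the containment you want to project genuinely \emph{fails} on $\mbb{P}(\mcl{L})$ for non-connected $\mcl{G}$ (this failure, equivalently the failure of duality-stability of support, cf.\ \cite{bensonwitherspoon14,plavnikwitherspoon18}, is exactly why one passes to the quotient), so there is nothing upstairs to push forward. The containment in the quotient is true but needs its own argument, as in Lemma \ref{lem:1102}: use that $V$ lies in the thick subcategory generated by $\Lambda=\oplus_{\lambda\in\pi}\lambda$ to get $\supp_{\mcl{L}/\pi}(M\ot V)\subset\supp_{\mcl{L}/\pi}(M)$ directly from the definition \eqref{eq:suppG}, and that $M$ lies in the localizing subcategory generated by $\Lambda$ together with the $\operatorname{Ad}$-invariance of the projected support (Lemma \ref{lem:1495}) to get $\supp_{\mcl{L}/\pi}(\Lambda\ot V)\subset\supp_{\mcl{L}/\pi}(V)$, hence $\supp_{\mcl{L}/\pi}(M\ot V)\subset\supp_{\mcl{L}/\pi}(V)$.

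The second gap is the reduction to $k=\bar k$ ``by faithfully flat descent.'' That thick ideals, supports and the prime ideal spectrum ``pass faithfully'' between $\stab(\Coh(\mcl{G}))$ and $\stab(\Coh(\mcl{G}_{\bar k}))$ is a substantive unproven claim, not a formal consequence of perfectness; there is no a priori bijection between thick ideals over $k$ and (Galois-stable) thick ideals over $\bar k$, nor a comparison of spectra. The paper sidesteps this entirely: it passes to a \emph{finite} Galois extension $K/k$ splitting $\pi=\mcl{G}_{\rm red}$, runs the algebraically closed argument verbatim over $K$, and then \emph{defines} the support theory over $k$ by $\supp_{\mcl{L}/\pi}(M):=q'\circ\supp_{\mcl{L}_K/\pi_K}(M_K)$ as in \eqref{eq:1623}, verifying exhaustiveness, faithfulness (projectivity is detected by base change) and multiplicativity (supports of base-changed objects are Galois-stable, and $(-)_K$ is monoidal and preserves objects centralizing the simples) directly over $k$, so that Theorems \ref{thm:classify} and \ref{thm:chevalley_balmer} are applied over $k$ itself. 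You should either adopt this construction or supply an actual descent argument; as written, neither of these two steps goes through.
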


Obviously, we've moved from an algebraically closed base field to a perfect base field is the statement of Theorem \ref{thm:G_classification}.  In the general (perfect) setting we take $\pi$ to be the \'etale subgroup $\pi:=\mcl{G}_{\rm red}$ in $\mcl{G}$ \cite[Ch 2, \S 5.2]{demazuregabriel70}, and have the adjoint action of $\pi$ on $\mcl{G}$, and hence on $\Coh(\mcl{G})$.  This generalized adjoint action induces an action of $\pi$ on the spectrum of cohomology, and we can again consider the quotient $\mbb{P}(\mcl{L}/\pi)$.  The support theory $(\mbb{P}(\mcl{L}/\pi),\supp_{\mcl{L}/\pi})$ referenced above, for non-algebraically closed $k$, is constructed explicitly in the proof of Theorem \ref{thm:G_classification} (Section \ref{sect:k}).

\subsection{The proof of Theorem \ref{thm:G_classification} when $k=\bar{k}$}
\label{sect:k=bark}

Let us maintain our assumption that $k=\bar{k}$, for the moment, and let $\mcl{G}$ be a finite group scheme over $k$.  Note that the category $\Coh(\mcl{G})$ is Chevalley, with subcategory of semisimple objects identified with the category of sheaves $\Coh(\pi)$ on $\pi=\mcl{G}_{\rm red}$.  We have the adjoint action of $\pi$ on $\Coh(\mcl{G})$, discussed above, and objects in $\Coh(\mcl{G})$ which centralize the simples are precisely $\pi$-equivariant objects in $\Coh(\mcl{G})$.  We let $\Coh(\mcl{G})^\pi$ denote the category of $\pi$-equivariant sheaves on $\mcl{G}$.
\par

The forgetful functor $Z^{\Coh(\pi)}(\Coh(\mcl{G}))=\Coh(\mcl{G})^\pi\to \Coh(\mcl{G})$ has a right adjoint $V\mapsto \oplus_{\lambda\in \pi}\operatorname{Ad}_\lambda(V)$ which sends any object to its orbit under the adjoint action of $\pi$.  Below we let $\lambda\cdot-:\mbb{P}(\mcl{L})\to \mbb{P}(\mcl{L})$ denote the action of $\lambda\in \pi$ on the projective spectrum of cohomology $\msf{Y}=\mbb{P}(\mcl{L})$ induced by the adjoint action of $\pi$ on $\Coh(\mcl{G})$.

\begin{lemma}\label{lem:1495}
For any $V$ in $\Coh(\pi)$, and $\lambda\in \pi$, we have an equality of supports $\supp_{\mcl{L}/\pi}(V)=\supp_{\mcl{L}/\pi}(\operatorname{Ad}_\lambda V)$.  Subsequently,  we have
\[
\supp_{\mcl{L}/\pi}(V)=\supp_{\mcl{L}/\pi}(\oplus_{\lambda\in \pi}\operatorname{Ad}_\lambda V).
\]
\end{lemma}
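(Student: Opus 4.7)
The plan is to reduce the statement to the more basic fact that the adjoint action of $\lambda$ translates the (unquotiented) hypersurface support $\supp^{hyp}_\mbb{P}(V) \subset \mbb{P}(\mcl{L})$ by the corresponding geometric action of $\lambda$ on $\mbb{P}(\mcl{L})$, and then to observe that this translation is collapsed by the quotient map $p$.

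First I would invoke Lemma \ref{lem:837} to identify $\supp^{hyp}_\mbb{P}(V)$ with $\Supp_{\msf{Y}}\Ext^\ast_{\Coh(\mcl{G})}(k,\oplus_{\mu\in\pi} V\ot\mu)^\sim$, regarded as a sheaf over $\Ext^\ast_{\Coh(\mcl{G})}(k,k)$. Since $\operatorname{Ad}_\lambda$ is a tensor auto-equivalence of $\Coh(\mcl{G})$ fixing the unit, it induces a graded ring automorphism $\lambda_\ast$ of $\Ext^\ast_{\Coh(\mcl{G})}(k,k)$; by the very definition of the $\pi$-action on the spectrum of cohomology, this is the action of $\lambda$ on $\mbb{P}(\mcl{L})$. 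The same auto-equivalence produces a $\lambda_\ast$-semilinear isomorphism $\Ext^\ast(k,W)\cong\Ext^\ast(k,\operatorname{Ad}_\lambda W)$ for any $W$.

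Applying this with $W=\oplus_{\nu}V\ot\nu$, and using that conjugation is a bijection on $\pi$ to rewrite $\oplus_{\mu}\operatorname{Ad}_\lambda(V)\ot\mu\cong\operatorname{Ad}_\lambda(\oplus_{\nu}V\ot\nu)$, I would conclude
\[
\supp^{hyp}_\mbb{P}(\operatorname{Ad}_\lambda V)=\lambda_\ast\cdot\supp^{hyp}_\mbb{P}(V)
\]
as subsets of $\mbb{P}(\mcl{L})$. Since $p\circ\lambda_\ast=p$ by the defining property of the quotient, applying $p$ yields the first equality $\supp_{\mcl{L}/\pi}(\operatorname{Ad}_\lambda V)=\supp_{\mcl{L}/\pi}(V)$. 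The second equality then follows from additivity of support over direct sums: $\supp_{\mcl{L}/\pi}(\oplus_{\lambda}\operatorname{Ad}_\lambda V)=\cup_{\lambda}\supp_{\mcl{L}/\pi}(\operatorname{Ad}_\lambda V)$, each summand of which equals $\supp_{\mcl{L}/\pi}(V)$ by the first part.

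The main technical point to pin down is the compatibility between the tensor auto-equivalence $\operatorname{Ad}_\lambda$ and the induced action on the cohomology ring, specifically that the $\lambda_\ast$-semilinearity of the isomorphism $\Ext^\ast(k,W)\cong\Ext^\ast(k,\operatorname{Ad}_\lambda W)$ really does translate supports by the same geometric action of $\pi$ on $\mbb{P}(\mcl{L})$ that defines the quotient $p$ (as opposed to its inverse, or some conjugate). This is essentially a bookkeeping exercise once the framework is in place, but it is the only step requiring genuine verification.
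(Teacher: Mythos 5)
Your proposal is correct and follows essentially the same route as the paper: establish the equivariance $\supp^{hyp}_\mbb{P}(\operatorname{Ad}_\lambda V)=\lambda\cdot\supp^{hyp}_\mbb{P}(V)$ via the action of $\operatorname{Ad}_\lambda$ on cohomology (the paper does this through a commuting square relating $\Ext^\ast_{\Coh(\mcl{G})}(k,k)$ and $\Ext^\ast_{\Coh(\mcl{G})}(V,V)$, identified with hypersurface support by Lemma \ref{lem:837}), then project along $p$ and use additivity over direct sums. The sidedness bookkeeping you flag is harmless here, since $p$ collapses the entire $\pi$-orbit regardless of whether $\lambda$ acts by $\lambda_\ast$ or its inverse.
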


\begin{proof}
The second claim follows from the first and the general property $\supp_{\mcl{L}/\pi}(V\oplus W)=\supp_{\mcl{L}/\pi}(V)\cup\supp_{\mcl{L}/\pi}(W)$.  As for the first claim, we have the commuting diagram
\[
\xymatrix{
\Ext^\ast_{\Coh(\mcl{G})}(k,k)\ar[rr]^(.45){\operatorname{Ad}_\lambda}\ar[d]_{-\ot V} & & \Ext^\ast_{\Coh(\mcl{G})}(k,k)\ar[d]^{-\ot \operatorname{Ad}_\lambda V}\\
\Ext^\ast_{\Coh(\mcl{G})}(V,V)\ar[rr]^(.45){\operatorname{Ad}_\lambda} & & \Ext^\ast_{\Coh(\mcl{G})}(\operatorname{Ad}_\lambda V,\operatorname{Ad}_\lambda V)
}
\]
which implies that $\supp^{coh}_\msf{Y}(\operatorname{Ad}_\lambda V)=\lambda\cdot\supp^{coh}_\msf{Y}(V)$.  Since hypersurface support and cohomological support are identified, by Lemma \ref{lem:837}, we similarly have $\supp^{hyp}_{\mbb{P}}(\operatorname{Ad}_\lambda V)=\lambda\cdot\supp^{hyp}_{\mbb{P}}(V)$.  Project to the quotient to find
\[
\begin{array}{rl}
\supp_{\mcl{L}/\pi}(\operatorname{Ad}_\lambda V)& = p(\supp^{hyp}_\mbb{P}(\operatorname{Ad}_\lambda V))\\
& = p\left(\lambda\cdot\supp^{hyp}_\mbb{P}(V)\right)= p(\supp^{hyp}_\mbb{P}(V))=\supp_{\mcl{L}/\pi}(V),
\end{array}
\]
establishing the claimed equality.
\end{proof}

\begin{lemma}\label{lem:1102}
For any $M$ in $\QCoh(\mcl{G})$, and $V$ in $\Coh(\mcl{G})$, we have
\[
\supp_{\mcl{L}/\pi}(M\ot V)\subset \left(\supp_{\mcl{L}/\pi}(M)\cap\supp_{\mcl{L}/\pi}(V)\right).
\]
The above inclusion is an equality when $V$ centralizes the simples in $\Coh(\mcl{G})$.
\end{lemma}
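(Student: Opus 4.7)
The strategy is to combine Theorem~\ref{thm:tppG}, which furnishes the tensor product property for hypersurface support on $\QCoh(\mcl{G})$ when one factor centralizes the simples, with elementary properties of the orbit quotient map $p:\mbb{P}(\mcl{L})\to \mbb{P}(\mcl{L}/\pi)$.

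For the general containment I would enlarge $V$ to the orbit object $\tilde{V}:=\bigoplus_{\lambda\in\pi}\operatorname{Ad}_\lambda V$, which centralizes the simples since the adjoint $\pi$-action merely permutes its summands (exactly as in the proof of Lemma~\ref{lem:1071}). Since $V=\operatorname{Ad}_e V$ is a summand of $\tilde{V}$, the product $M\ot V$ is a summand of $M\ot\tilde{V}$, and Theorem~\ref{thm:tppG} applied to the pair $(M,\tilde{V})$ yields
\[
\supp^{hyp}_\mbb{P}(M\ot V)\ \subset\ \supp^{hyp}_\mbb{P}(M\ot\tilde{V})\ =\ \supp^{hyp}_\mbb{P}(M)\cap\supp^{hyp}_\mbb{P}(\tilde{V}).
\]
Pushing this inclusion through $p$, using the trivial relation $p(A\cap B)\subset p(A)\cap p(B)$ together with the identity $p(\supp^{hyp}_\mbb{P}(\tilde{V}))=\supp_{\mcl{L}/\pi}(V)$ (which follows from the equivariance $\supp^{hyp}_\mbb{P}(\operatorname{Ad}_\lambda V)=\lambda\cdot\supp^{hyp}_\mbb{P}(V)$ extracted inside the proof of Lemma~\ref{lem:1495}), produces the containment at the level of $\supp_{\mcl{L}/\pi}$.

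For the equality when $V$ itself centralizes the simples, Theorem~\ref{thm:tppG} gives directly $\supp^{hyp}_\mbb{P}(M\ot V)=\supp^{hyp}_\mbb{P}(M)\cap\supp^{hyp}_\mbb{P}(V)$, and the subtle point is that $p$ need not preserve intersections. Here the crucial observation is that the centralizing hypothesis $\operatorname{Ad}_\lambda V\cong V$ for all $\lambda\in\pi$, combined with the same equivariance used above, forces $\supp^{hyp}_\mbb{P}(V)$ to be $\pi$-stable inside $\mbb{P}(\mcl{L})$. For any subset $A$ and any $\pi$-stable subset $B$ of $\mbb{P}(\mcl{L})$ the orbit map satisfies $p(A\cap B)=p(A)\cap p(B)$---any two preimages of a common point lie in the same $\pi$-orbit, and so one of them already lives in $B$. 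Applying this with $A=\supp^{hyp}_\mbb{P}(M)$ and $B=\supp^{hyp}_\mbb{P}(V)$ closes the argument.

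The only genuinely substantive input is the $\pi$-equivariance of hypersurface support under the adjoint action, which is already recorded implicitly in Lemma~\ref{lem:1495}; everything else is formal bookkeeping with orbit quotients, and I do not anticipate any serious obstacles beyond keeping careful track of when a set being invariant is needed to commute $p$ past an intersection.
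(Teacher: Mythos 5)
Your proof is correct, and it splits into two halves of differing novelty. The equality statement is handled exactly as in the paper: Theorem \ref{thm:tppG} gives $\supp^{hyp}_\mbb{P}(M\ot V)=\supp^{hyp}_\mbb{P}(M)\cap\supp^{hyp}_\mbb{P}(V)$, and one descends through $p$ using that $\supp^{hyp}_\mbb{P}(V)$ is $\pi$-stable when $V$ centralizes the simples; your observation that $p(A\cap B)=p(A)\cap p(B)$ whenever $B$ is $\pi$-stable (fibers of $p$ being $\pi$-orbits) is precisely the point the paper leaves implicit. The containment statement, however, you prove by a genuinely different route: you enlarge $V$ to the orbit object $\tilde V=\oplus_{\lambda\in\pi}\operatorname{Ad}_\lambda V$, which is exactly $FR(V)=\oplus_\lambda\lambda\ot V\ot\lambda^{-1}$ and hence centralizes the simples by (the construction in) Lemma \ref{lem:1071}, apply Theorem \ref{thm:tppG} once to the pair $(M,\tilde V)$, and then use Lemma \ref{lem:1495} to identify $p(\supp^{hyp}_\mbb{P}(\tilde V))$ with $\supp_{\mcl{L}/\pi}(V)$. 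The paper instead argues at the definition level: the bound by $\supp_{\mcl{L}/\pi}(M)$ comes from $V$ lying in the thick subcategory generated by $\Lambda$ together with the formula \eqref{eq:suppG}, and the bound by $\supp_{\mcl{L}/\pi}(V)$ from $M$ lying in the localizing subcategory generated by $\Lambda$ plus a computation of $\supp_{\mcl{L}/\pi}(\Lambda\ot V)$ via Lemma \ref{lem:1495}. Your version is more uniform (both halves of the intersection come out of one application of the tensor product property) but invokes the full strength of Theorem \ref{thm:tppG} even for the sub-multiplicativity claim, whereas the paper's containment argument is softer, needing only closure properties of support under thick/localizing subcategories and the definition of $\supp^{hyp}_\mbb{P}$ for $\mcl{G}$; since Theorem \ref{thm:tppG} is established before this lemma, there is no circularity and either route is legitimate. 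The only stylistic caution is that ``the adjoint action permutes the summands'' is by itself weaker than centralizing the simples—one needs the actual centralizing structure—but your citation of Lemma \ref{lem:1071} supplies exactly that.
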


\begin{proof}
As $V$ is in the thick subcategory generated by $\Lambda=\oplus_i \lambda_i$, we have
\[
\supp_{\mcl{L}/\pi}(M\ot V)\subset \supp_{\mcl{L}/\pi}(\oplus_i M\ot\lambda_i)=\supp_{\mcl{L}/\pi}(M),
\]
where the equality follows by the definition of hypersurface support for $\mcl{G}$ \eqref{eq:suppG}.  Now, since $M$ is in the localizing subcategory generated by $\Lambda$, we have the other inclusion
\begin{equation}\label{eq:1112}
\supp_{\mcl{L}/\pi}(M\ot V)\subset \supp_{\mcl{L}/\pi}(V)\ \ \text{\em provided}\ \ \supp_{\mcl{L}/\pi}(\Lambda\ot V)\subset \supp_{\mcl{L}/\pi}(V).
\end{equation}
By Lemma \ref{lem:1495}, and the fact that $\Lambda=\oplus_{\lambda\in \pi} \lambda$ itself centralizes the simples, we have
\[
\begin{array}{rl}
\supp_{\mcl{L}/\pi}(\Lambda\ot V) & = \supp_{\mcl{L}/\pi}\left(\oplus_\lambda \operatorname{Ad}_\lambda(\Lambda\ot V)\right)\\
& = \supp_{\mcl{L}/\pi}(\oplus_\lambda \Lambda\ot \operatorname{Ad}_\lambda(V)) = \supp_{\mcl{L}/\pi}(\oplus_\lambda \operatorname{Ad}_\lambda(V)\ot \Lambda).
\end{array}
\]
This final support space is equal to $\supp_{\mcl{L}/\pi}(\oplus_\lambda\operatorname{Ad}_\lambda V)=\supp_{\mcl{L}/\pi}(V)$, again by the definition \eqref{eq:suppG}.  So we obtain the right-hand containment of \eqref{eq:1112}, and thus the desired containment $\supp_{\mcl{L}/\pi}(M\ot V)\subset \supp_{\mcl{L}/\pi}(V)$.
\par

The claimed equality when $V$ is $\pi$-equivariant comes from the analogous equality for $\supp^{hyp}_\mbb{P}$, provided in Theorem \ref{thm:tppG}, and the fact that the closed subspace $\supp^{hyp}_\mbb{P}(V)$ is $\pi$-stable in this case.
\end{proof}

Detection for hypersurface support, Theorem \ref{thm:detectG}, also implies

\begin{lemma}\label{lem:1703}
For $M$ in $\QCoh(\mcl{G})$, $\supp_{\mcl{L}/\pi}(M)=\emptyset$ if and only if $M$ vanishes in the stable category $\Stab(\Coh(\mcl{G}))$.
\end{lemma}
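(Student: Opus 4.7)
The plan is to reduce the lemma directly to Theorem \ref{thm:detectG}, which is the corresponding detection statement for the ``unquotiented'' hypersurface support $\supp^{hyp}_\mbb{P}$. By the very definition \eqref{eq:1847}, the theory $\supp_{\mcl{L}/\pi}$ is just the composition $p\circ\supp^{hyp}_\mbb{P}$ with the quotient map $p:\mbb{P}(\mcl{L})\to \mbb{P}(\mcl{L}/\pi)$, so this should be essentially a formal consequence.

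First I would observe the trivial set-theoretic fact that for any function of sets $p:X\to Y$ and any subset $S\subset X$, one has $p(S)=\emptyset$ if and only if $S=\emptyset$. Applying this with $S=\supp^{hyp}_\mbb{P}(M)\subset \mbb{P}(\mcl{L})$ gives the equivalence
\[
\supp_{\mcl{L}/\pi}(M)=\emptyset\ \Longleftrightarrow\ \supp^{hyp}_\mbb{P}(M)=\emptyset.
\]

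Second, I would invoke Theorem \ref{thm:detectG}, which asserts precisely that $\supp^{hyp}_\mbb{P}(M)=\emptyset$ is equivalent to $M$ being projective in $\QCoh(\mcl{G})$, i.e.\ vanishing in the stable category $\Stab(\Coh(\mcl{G}))$. Composing the two equivalences yields the desired statement.

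There is really no obstacle here: the substantive content lives in Theorem \ref{thm:detectG}, which was already established via the fact that projectivity of $M$ over $\mcl{G}$ is equivalent to projectivity of each of its components $M_\lambda$ over the identity component $\mcl{G}_o$, together with the detection theorem for the connected case (Theorem \ref{thm:detec}). The present lemma simply transports that vanishing criterion across the quotient map $p$.
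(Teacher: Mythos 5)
Your argument is correct and matches the paper, which records Lemma \ref{lem:1703} as an immediate consequence of Theorem \ref{thm:detectG}: since $\supp_{\mcl{L}/\pi}=p\circ\supp^{hyp}_\mbb{P}$ by definition, emptiness of the quotiented support is equivalent to emptiness of $\supp^{hyp}_\mbb{P}(M)$, and the detection theorem does the rest. Nothing further is needed.
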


At this point we essentially know that the support theory $(\mbb{P}(\mcl{L}/\pi),\supp_{\mcl{L}/\pi})$ is faithful, multiplicative, and provides a tensor extension to the big stable category $\Stab(\Coh(\mcl{G}))$.  So we obtain the main result of the section, at least when $k$ is algebraically closed.

\begin{proof}[Proof of Theorem \ref{thm:G_classification}, when $k=\bar{k}$]
The fact that cohomological support is exhaustive implies that it's pushforward along $p$ is also exhaustive.  Multiplicativity follows from Lemma \ref{lem:1102}, \cite[Lemma 10.3]{negronpevtsova}, and Lemma \ref{lem:1495}, after we note that the cohomological supports defined via the left and right actions of $\Ext^\ast(k,k)$ agree for objects which centralize the simples \cite[Proposition 5.7.1]{benson91} \cite[Proposition 2.10.8]{egno15}.  Finally, Lemmas \ref{lem:1102} and \ref{lem:1703} imply that the pair $(\mbb{P}(\mcl{L}/\pi),\supp_{\mcl{L}/\pi})$ provides a faithful tensor extension of the pushforward of cohomological support along the quotient map $p:\mbb{P}(\mcl{L})\to \mbb{P}(\mcl{L}/\pi)$ to the big stable category.  This is all to say, the pair $(\mbb{P}(\mcl{L}/\pi),\supp_{\mcl{L}/\pi})$ provides a lavish support theory for $\stab(\Coh(\mcl{G}))$.  Since $\Coh(\mcl{G})$ is Chevalley, we apply Theorems \ref{thm:classify} and \ref{thm:chevalley_balmer} to see that this support classifies thick ideals in $\stab(\Coh(\mcl{G}))$, and also calculates the spectrum.
\end{proof}

\subsection{Theorem \ref{thm:G_classification} over an arbitrary perfect base}
\label{sect:k}

We consider now $\mcl{G}$ an arbitrary finite group scheme over a perfect base field $k$.

\begin{proof}[Proof of Theorem \ref{thm:G_classification} over general $k$]
The fact that $\pi=\mcl{G}_{\rm red}\subset \mcl{G}$ is a subgroup in this case \cite[Ch 2, \S 5, Corollaire 2.3]{demazuregabriel70} implies that--and is equivalent to the fact that--$\Coh(\mcl{G})$ is Chevalley.  Consider a (finite) Galois extension $k\to K$ which splits the \'etale subgroup $\pi=\mcl{G}_{\rm red}$, so that every closed point in $\mcl{G}_K$ is a $K$-point.  Then $\pi_K$ is identified with a discrete group, and all simples in $\Coh(\mcl{G}_K)$ are $1$-dimensional.  We then define the support theory
\[
\supp_{\mcl{L}_K/\pi_K}:=p_K\circ \supp^{hyp}_{\mbb{P}_K}
\]
just as in the case $k=\bar{k}$, and follow the above arguments verbatim to find that the pair $(\mbb{P}(\mcl{L}_K/\pi_K),\supp^{hyp}_{\mcl{L}_K/\pi_K})$ provides a lavish support theory for $\stab(\Coh(\mcl{G}_K))$.
\par

We note that the Galois group $\Gamma=\operatorname{Gal}(K/k)$ acts on $\mbb{P}(\mcl{L}_K)$ and $\mbb{P}(\mcl{L}_K/\pi_K)$ and we have the diagram
\[
\xymatrix{
\mbb{P}(\mcl{L}_K)\ar[d]_{p_K}\ar[rr]^q & & \mbb{P}(\mcl{L})\ar[d]^p\\
\mbb{P}(\mcl{L}_K/\pi_K)\ar[rr]^{q'} & & \mbb{P}(\mcl{L}/\pi),
}
\]
where the horizontal maps are quotient maps by that action of $\Gamma$.  We define, for any $M$ in $\QCoh(\mcl{G})$, the $\mcl{P}(\mcl{L}/\pi)$-valued support
\begin{equation}\label{eq:1623}
\supp^{hyp}_{\mcl{L}/\pi}(M):=q'\circ \supp_{\mcl{L}_K/\pi_K}(M_K).
\end{equation}
One can check that $\supp^{hyp}_{\mcl{L}/\pi}(V)=p\circ\supp^{coh}_\msf{Y}(V)$ for any finite-dimensional $V$, so that this support theory is exhaustive.  Also, since the base change $M_K$ is projective if and only if $M$ is projective \cite[Corollary 3.2 \& Lemma 5.3]{negronpevtsova2}, and hypersurface support for $\Coh(\mcl{G}_K)$ is faithful by Lemma \ref{lem:1703}, the support theory \eqref{eq:1623} is also faithful.  We note that the base change map $(-)_K:\QCoh(\mcl{G})\to \QCoh(\mcl{G}_K)$ is monoidal and preserves objects which centralize the simples, and that the support $\supp_{\mcl{L}_K/\pi_K}(V_K)$ is $\Gamma$-stable for any $V$ in $\Coh(\mcl{G})$.  One can therefore recover multiplicativity of the theory $\supp_{\mcl{L}/\pi}$ from multiplicativity of the theory $\supp_{\mcl{L}_K/\pi_K}$.
\par

From the above information we see that the pair $(\mbb{P}(\mcl{L}/\pi),\supp_{\mcl{L}/\pi})$ provides a lavish support theory for $\stab(\Coh(\mcl{G}))$, and therefore classifies thick ideals, by Theorem \ref{thm:classify}.  We also apply Theorem \ref{thm:chevalley_balmer} to obtain the claimed calculation of the spectrum for $\stab(\Coh(\mcl{G}))$.
\end{proof}

We note that the above proof applies, more generally, to coherent sheaves $\Coh(\mcl{G})$ on any finite group scheme for which the reduced subscheme $\mcl{G}_{\rm red}\subset \mcl{G}$ is in fact a sub\emph{group}.

\subsection{Central generation}
\label{sect:cen_G}
We now consider central generation of ideals in the stable category $\stab(\Coh(\mcl{G}))$, or rather the possibility of such central generation.  We allow $k$ to be perfect throughout.

\begin{lemma}\label{lem:1190}
The map $\Ext^\ast_{Z(\Coh(\mcl{G}))}(k,k)\to \Ext^\ast_{\Coh(\mcl{G})}(k,k)$ induced by the forgetful functor $Z(\Coh(\mcl{G}))\to \Coh(\mcl{G})$ is a finite algebra map.
\end{lemma}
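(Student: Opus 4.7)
The plan is to identify the Drinfeld center explicitly as representations of a known Hopf algebra and then combine a Hochschild--Serre spectral sequence with classical invariant theory for finite group schemes. Write $E := \Ext^\ast_{\Coh(\mcl{G})}(k,k)$ and $E_Z := \Ext^\ast_{Z(\Coh(\mcl{G}))}(k,k)$. First, by faithfully flat base change (which preserves finiteness of graded algebra maps) I reduce to the case $k = \bar k$. Second, as recalled in Section \ref{sect:G_gen}, the algebra $E$ is a Noetherian, finitely generated, graded commutative $k$-algebra.

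The next step is the identification $Z(\Coh(\mcl{G})) \simeq \rep(\mcl{D}(\O(\mcl{G})))$ with representations of the Drinfeld double of the commutative Hopf algebra $\O(\mcl{G})$. This double is a Hopf extension of the finite-dimensional cocommutative Hopf algebra $k\mcl{G} = \O(\mcl{G})^\ast$ by $\O(\mcl{G})$, and the forgetful functor $F$ corresponds to restriction along the Hopf subalgebra inclusion $\O(\mcl{G}) \hookrightarrow \mcl{D}(\O(\mcl{G}))$. The resulting action of $k\mcl{G}$ on $\O(\mcl{G})$, and hence on $E$, is the coadjoint action, and the image of $F^\ast$ lies in the invariant subring $E^{k\mcl{G}} \subset E$. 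By the Noether-style finiteness theorem for actions of finite group schemes on finitely generated commutative $k$-algebras, each element of $E$ satisfies an integral relation over $E^{k\mcl{G}}$ coming from its $k\mcl{G}$-orbit polynomial; consequently $E^{k\mcl{G}}$ is itself finitely generated and $E$ is module-finite over $E^{k\mcl{G}}$.

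The main obstacle, and the key technical point, is to show that the image of $F^\ast$ is a finite-index subring of $E^{k\mcl{G}}$. I would tackle this via the Hochschild--Serre spectral sequence associated to the Hopf extension $\O(\mcl{G}) \hookrightarrow \mcl{D}(\O(\mcl{G})) \twoheadrightarrow k\mcl{G}$,
\[
\mathbf{E}_2^{p,q} = H^p\bigl(k\mcl{G},\, H^q(\O(\mcl{G}), k)\bigr) \;\Longrightarrow\; E_Z^{p+q},
\]
whose edge map $E_Z^n \to (E^n)^{k\mcl{G}}$ factors $F^\ast$. Finite generation of $H^\ast(k\mcl{G}, M)$ for finitely generated $k\mcl{G}$-modules $M$ (Friedlander--Suslin for the cocommutative finite-dimensional Hopf algebra $k\mcl{G}$), combined with Noetherianness of $E$, forces the differentials targeting the zero column to stabilize after finitely many pages in each total degree. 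Hence $\mathbf{E}_\infty^{0,\ast}$ realizes a finite-index graded subring of $\mathbf{E}_2^{0,\ast} = E^{k\mcl{G}}$. Chaining the finite extensions $F^\ast(E_Z) \subset E^{k\mcl{G}} \subset E$ then yields the desired finiteness of $F^\ast$.
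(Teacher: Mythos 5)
The decisive step of your plan is not established, and it is exactly where all of the content lies. From the Lyndon--Hochschild--Serre spectral sequence you need that the image of the edge map, i.e.\ $\mathbf{E}_\infty^{0,\ast}\subset \mathbf{E}_2^{0,\ast}=E^{k\mcl{G}}$, is \emph{cofinite} (say, that $E^{k\mcl{G}}$ is integral, hence module-finite, over it). Degreewise stabilization of a first-quadrant spectral sequence is automatic and gives nothing of the sort: an invariant class $x\in E^{k\mcl{G}}$ may support nonzero differentials on every relevant page and simply fail to lift to $\Ext^\ast_{Z(\Coh(\mcl{G}))}(k,k)$. The only mechanism available is the characteristic-$p$ trick, $x^{p^{r-2}}$ survives to the $r$-th page; this yields integrality only if the spectral sequence stops at a \emph{single} finite page uniformly in all degrees. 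Uniform stopping is itself a theorem in the Evens/Friedlander--Suslin style: one needs $\mathbf{E}_2=H^\ast(k\mcl{G},E)$ to be Noetherian as a \emph{module} over a subalgebra of permanent cocycles. The only evident permanent cocycles are those of the bottom row $H^\ast(k\mcl{G},k)$, and $H^\ast(k\mcl{G},E)$ is finitely generated as an algebra over it but certainly not module-finite (the coefficient algebra $E$ is infinite-dimensional); to manufacture permanent cocycles in the fiber direction you would already need invariant classes that lift to the center--which is what you are trying to prove. So your argument is circular at its key point. Supplying those classes, equivalently the finite generation statement for the cohomology of $Z(\Coh(\mcl{G}))$ together with module-finiteness of $\Ext$-groups over it, is the main result of \cite{negron}; the paper's proof simply quotes that theorem and feeds it into the adjunction argument of \cite[Proposition 3.3]{negronplavnik} (via $\Ext^\ast_{\Coh(\mcl{G})}(k,k)\cong \Ext^\ast_{Z(\Coh(\mcl{G}))}(k,R(k))$, $R$ the right adjoint of the forgetful functor, which is then a finite module over $\Ext^\ast_{Z(\Coh(\mcl{G}))}(k,k)$). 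Your route does not avoid that hard input; it silently assumes it.

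Two smaller points. The integrality of $E$ over $E^{k\mcl{G}}$ cannot be justified by ``orbit polynomials'': orbits only exist for the \'etale part of $\mcl{G}$. For the infinitesimal part one argues instead that $p^r$-th powers are invariant when the action has height $r$ (and odd-degree exterior generators are nilpotent, so they cause no harm); the resulting statement is true, but needs this argument. Likewise Friedlander--Suslin is stated for finite-dimensional coefficients, and you need its standard extension to finitely generated commutative coefficient algebras to even form your finiteness claims on the $\mathbf{E}_2$-page. These are repairable; the circularity described above is not, short of importing the theorem of \cite{negron} that the paper's one-line proof rests on.
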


\begin{proof}
This follows by the main result of \cite{negron} and \cite[Proposition 3.3]{negronplavnik}.
\end{proof}

The above lemma ensures that we get a well-defined map on projective spectra
\[
\Proj\Ext^\ast_{\Coh(\mcl{G})}(k,k)\to \Proj\Ext^\ast_{Z\!\Coh(\mcl{G})}(k,k).
\]

\begin{theorem}\label{thm:center_genII}
Consider $\mcl{G}$ a finite group scheme with \'etale subgroup $\pi=\mcl{G}_{\rm red}$.  Suppose that the forgetful functor $Z(\Coh(\mcl{G}))\to \Coh(\mcl{G})$ induces a surjection onto the reduced, $\pi$-invariant cohomology $\Ext^\ast_{Z\!\Coh(\mcl{G})}(k,k)\to \Ext^\ast_{\Coh(\mcl{G})}(k,k)_{\rm red}^\pi$.  Then all thick ideals in $\stab(\Coh(\mcl{G}))$ are centrally generated.
\end{theorem}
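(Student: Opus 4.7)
The plan is to adapt Lemma \ref{lem:general} to the present non-braided setting, with the twist that the classifying space is the quotient $\mbb{P}(\mcl{L}/\pi)$ rather than $\mbb{P}(\mcl{L})$ itself. By Theorem \ref{thm:G_classification}, thick ideals in $\stab(\Coh(\mcl{G}))$ are in bijection with specialization closed subsets of $\mbb{P}(\mcl{L}/\pi)$, or equivalently with $\pi$-stable specialization closed subsets of $\mbb{P}(\mcl{L})$. Consequently, it suffices to show that every $\pi$-stable closed subvariety $\Theta \subset \mbb{P}(\mcl{L})$ is realised as the cohomological support of some central object $W_\Theta$ in $\stab(\Coh(\mcl{G}))$; an arbitrary thick ideal $\msc{I}$ with support $T = \bigcup_i \Theta_i$ will then agree with the thick ideal generated by the corresponding family $\{W_{\Theta_i}\}$ via the classification.

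To produce such central realisations I would run the Koszul-object/mapping-cone argument from Lemma \ref{lem:general} inside the Drinfeld center. For any homogeneous class $\xi:\1 \to \Sigma^n \1$ in $\Ext^\ast_{Z(\Coh(\mcl{G}))}(k,k)$, the cone $L_\xi = \mrm{cone}(\xi)$ lives in $D^b(Z(\Coh(\mcl{G})))$, and its image under the forgetful functor $F:Z(\Coh(\mcl{G})) \to \Coh(\mcl{G})$ is a central object in $\stab(\Coh(\mcl{G}))$ whose cohomological support is the vanishing locus of $F(\xi)$ in $\mbb{P}(\mcl{L})$. Tensor products of such cones remain in the image of $F$, so they are again central, and by \cite[Theorem 5.2]{berghplavnikwitherspoon} the support of $F(L_{\xi_1}) \otimes \cdots \otimes F(L_{\xi_r})$ is the joint vanishing locus $V(F(\xi_1), \ldots, F(\xi_r))$. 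Thus the problem reduces to showing that every $\pi$-stable closed subvariety of $\mbb{P}(\mcl{L})$ is cut out by finitely many cohomology classes in the image of the center.

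This is where the surjectivity hypothesis is used. Since $\pi$ is finite and $\Ext^\ast_{\Coh(\mcl{G})}(k,k)_{\rm red}$ is a finitely generated $k$-algebra (see Section \ref{sect:G_gen}), the invariant subring $\Ext^\ast_{\Coh(\mcl{G})}(k,k)^\pi_{\rm red}$ is finitely generated and its $\Proj$ is precisely $\mbb{P}(\mcl{L}/\pi)$. A $\pi$-stable closed subvariety $\Theta\subset \mbb{P}(\mcl{L})$ is the preimage of a closed subvariety of the quotient, so $\Theta = V(\bar\xi_1, \ldots, \bar\xi_r)$ for finitely many homogeneous $\pi$-invariants $\bar\xi_i \in \Ext^\ast_{\Coh(\mcl{G})}(k,k)^\pi_{\rm red}$. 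By hypothesis each $\bar\xi_i$ lifts to a central class $\xi_i \in \Ext^\ast_{Z(\Coh(\mcl{G}))}(k,k)$, and the central object $F(L_{\xi_1} \otimes \cdots \otimes L_{\xi_r})$ has support $\Theta$, as required.

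The main obstacle, which I expect to be mild, is verifying that replacing a reduced invariant class $\bar\xi_i$ by an arbitrary central lift $\xi_i$--whose image in the full cohomology ring $\Ext^\ast_{\Coh(\mcl{G})}(k,k)$ agrees with an honest lift of $\bar\xi_i$ only modulo nilpotents--does not disturb the vanishing locus inside $\mbb{P}(\mcl{L}) = \Proj \Ext^\ast_{\Coh(\mcl{G})}(k,k)_{\rm red}$. This is a clean check, since vanishing loci on the reduced $\Proj$ are insensitive to nilpotent perturbations of the defining functions. Once this point is settled, assembling the pieces via Theorem \ref{thm:G_classification} gives the claimed central generation of all thick ideals in $\stab(\Coh(\mcl{G}))$.
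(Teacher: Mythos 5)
Your proposal is correct and follows essentially the same route as the paper: the paper also deduces from the surjectivity hypothesis that closed subsets of $\mbb{P}(\mcl{L}/\pi)$ are cut out by classes coming from $\Ext^\ast_{Z\!\Coh(\mcl{G})}(k,k)$, and then runs the mapping-cone argument of Lemma \ref{lem:general} (as in Theorem \ref{thm:center_genI}) together with the classification of Theorem \ref{thm:G_classification} to conclude central generation. Your closing remark about nilpotent ambiguity in the lifts is indeed harmless, since supports live in the reduced $\Proj$.
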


We note, before beginning the proof, that the image of $\Ext^\ast_{Z\!\Coh(\mcl{G})}(k,k)$ in $\Ext^\ast_{\Coh(\mcl{G})}(k,k)$ does in fact lie in the invariants $\Ext^\ast_{\Coh(\mcl{G})}(k,k)^\pi$, under the adjoint action.  This just follows from the fact that maps in the Drinfeld center are, in particular, maps of $\pi$-equivariant objects in $\Coh(\mcl{G})$.  We also note that one reduces, then take invariants, in the expression $\Ext^\ast_{\Coh(\mcl{G})}(k,k)_{\rm red}^\pi$, although the order of these operations is irrelevant in terms of the topology of the spectrum.

\begin{proof}
Such surjectivity implies that the map
\[
\mbb{P}(\mcl{L}/\pi)=\Proj\Ext^\ast_{\Coh(\mcl{G})}(k,k)^\pi\to \Proj\Ext^\ast_{Z\!\Coh(\mcl{G})}(k,k)
\]
is a closed embedding, and hence that all closed subsets in $\mbb{P}(\mcl{L}/\pi)$ are realized as preimages of closed sets in $\Proj\Ext^\ast_{Z\!\Coh(\mcl{G})}(k,k)$.  One now repeats exactly the arguments in the proof of Theorem \ref{thm:center_genI} to obtain the claimed result.
\end{proof}

One instance in which such surjectivity holds is in the case in which $\mcl{G}$ admits a normal embedding into a smooth algebraic group $\mcl{H}$.  This is due to certain formality results of \cite{negron}.

\begin{corollary}\label{cor:norm}
If $\mcl{G}$ admits a normal embedding into a smooth algebraic group, then all thick ideals in $\stab(\Coh(\mcl{G}))$ are centrally generated.
\end{corollary}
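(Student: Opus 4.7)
By Theorem~\ref{thm:center_genII}, it suffices to verify that the restriction map
\[
\Ext^\ast_{Z(\Coh(\mcl{G}))}(k,k)\ \longrightarrow\ \Ext^\ast_{\Coh(\mcl{G})}(k,k)_{\rm red}^{\pi}
\]
is surjective. The corollary then follows as a direct application of that theorem. So the entire task reduces to establishing this surjectivity under the hypothesis of a normal embedding $\mcl{G}\hookrightarrow \mcl{H}$ into a smooth algebraic group.

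To produce this surjectivity I would upgrade the bare finiteness statement of Lemma~\ref{lem:1190} via the formality results of \cite{negron}. Under the normality hypothesis, those results express both $\Ext^\ast_{Z(\Coh(\mcl{G}))}(k,k)$ and $\Ext^\ast_{\Coh(\mcl{G})}(k,k)$ as cohomologies of explicit dg models built from the conjugation action of $\mcl{H}$ on $\mcl{G}$ (and on the formal quotient $\mcl{H}/\mcl{G}$). In particular, on the Drinfeld-center side the formality model is by construction $\mcl{H}$-equivariant, and hence its image in $\Ext^\ast_{\Coh(\mcl{G})}(k,k)$ automatically lands in the subring fixed by the adjoint action of $\mcl{H}$ on $\mcl{G}$.

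Step two is to identify these adjoint-invariants with the $\pi$-invariants that appear in Theorem~\ref{thm:center_genII}. Since $\pi=\mcl{G}_{\rm red}$ is the \'etale quotient sitting as a (normal) subgroup of $\mcl{H}$, the adjoint action of $\pi$ on $\Coh(\mcl{G})$ used in Section~\ref{sect:ideals_G} is literally the restriction of the $\mcl{H}$-conjugation action, so the two notions of invariants agree after reduction. Combining this with the formality identification from step one gives that the reduced image of the forgetful map is precisely $\Ext^\ast_{\Coh(\mcl{G})}(k,k)_{\rm red}^{\pi}$, whence the required topological surjection on spectra. Invoking Theorem~\ref{thm:center_genII} then yields central generation of all thick ideals.

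The main obstacle I expect is step one: namely, extracting from \cite{negron} the precise compatibility between the formality model for the Drinfeld center and the one for $\Coh(\mcl{G})$ itself, at a level sufficient to see that the forgetful map on cohomology is \emph{precisely} the inclusion of conjugation-invariants (and not merely a finite extension of some strictly smaller subring). Once that compatibility is in hand, the identification of $\mcl{H}$-invariants with $\pi$-invariants, and the appeal to Theorem~\ref{thm:center_genII}, are routine.
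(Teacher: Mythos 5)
Your reduction to Theorem~\ref{thm:center_genII} matches the paper's, but the entire content of the corollary is the surjectivity of $\Ext^\ast_{Z(\Coh(\mcl{G}))}(\1,\1)\to \Ext^\ast_{\Coh(\mcl{G})}(\1,\1)^{\pi}_{\rm red}$, and that is exactly the step you leave open: your closing paragraph concedes that extracting the needed statement from \cite{negron} is ``the main obstacle,'' so what you have is a plan rather than a proof. Moreover, the route you sketch aims at a stronger statement than is needed and contains inaccuracies. You want the image of the center's cohomology to be \emph{precisely} the ring of conjugation invariants, and you assert that $\mcl{H}$-invariants agree with $\pi$-invariants after reduction; neither claim is justified. (Note also that $\pi=\mcl{G}_{\rm red}$ is the reduced sub\emph{group} of $\mcl{G}$, not an \'etale quotient, and there is no reason for it to be normal in $\mcl{H}$; since $\pi\subset\mcl{G}\subset\mcl{H}$, invariance under $\mcl{H}$ is a priori a \emph{stronger} condition than invariance under $\pi$, so the proposed identification of invariant rings would itself require an argument.)

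The paper's proof sidesteps any computation of the image of the forgetful map. Normality of the embedding makes the parametrizing algebra $Z=\O(\mcl{H}/\mcl{G})$ a $\mcl{G}$-invariant subalgebra of $U=\O(\mcl{H})$, so the deformation map $\mfk{d}:A_Z\to \Ext^\ast_{\Coh(\mcl{G})}(\1,\1)$---already known to be surjective modulo nilpotents by (the proof of) \cite[Lemma 7.5]{negronpevtsova}---acts on $D^b(\Coh(\mcl{G}))$ by $\mcl{G}$-invariant transformations. The formality results \cite[Lemma 5.2, Theorem 5.4]{negron}, together with the identification $\RHom_{Z(\Coh(\mcl{G}))}(\1,\1)=\RHom_{\mcl{G}}(\1,\RHom_{\Coh(\mcl{G})}(\1,\1))$, then produce a lift $\tilde{\mfk{d}}:A_Z\to \Ext^\ast_{Z(\Coh(\mcl{G}))}(\1,\1)$ of $\mfk{d}$. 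Since the composite $A_Z\to \Ext^\ast_{Z(\Coh(\mcl{G}))}(\1,\1)\to \Ext^\ast_{\Coh(\mcl{G})}(\1,\1)$ is surjective modulo nilpotents, so is the forgetful map, and in particular it surjects onto $\Ext^\ast_{\Coh(\mcl{G})}(\1,\1)^{\pi}_{\rm red}$, which is all that Theorem~\ref{thm:center_genII} requires. This factoring of an already-surjective (mod nilpotents) deformation map through the center is the concrete mechanism missing from your proposal; establishing an exact description of the image as conjugation invariants is both harder and unnecessary.
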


\begin{proof}
In this case the deformation map $\mfk{d}:A_Z\to \Ext^\ast_{\Coh(\mcl{G})}(\1,\1)$, which is surjective modulo nilpotents \cite[(proof of) Lemma 7.5]{negronpevtsova}, is $\mcl{G}$-equivariant, where we give $A_Z$ the trivial action and $\Ext^\ast_{\Coh(\mcl{G})}(\1,\1)$ the adjoint action.  The point here is that the parametrizing subalgebra $Z=\O(\mcl{H}/\mcl{G})$ for the deformation $\mcl{H}\to \mcl{H}/\mcl{G}$ associated to a normal embedding $\mcl{G}\to \mcl{H}$ is a $\mcl{G}$-invariant subalgebra in $U=\O(\mcl{H})$.  Hence the algebra $A_Z$ acts on $D^b(\Coh(\mcl{G}))$ by $\mcl{G}$-invariant transformations in this case.
\par

By \cite[Lemma 5.2, Theorem 5.4]{negron}, and the identification
\[
\RHom_{Z\!\Coh(\mcl{G})}(\1,\1)=\RHom_\mcl{G}(\1,\RHom_{\Coh(\mcl{G})}(\1,\1))
\]
\cite[\S 7.1]{negron}, the map $\mfk{d}$ admits a lift $\tilde{\mfk{d}}:A_Z\to \Ext^\ast_{Z\!\Coh(\mcl{G})}(\1,\1)$ to the cohomology of the Drinfeld center.  It follows that the map $\Ext^\ast_{Z\!\Coh(\mcl{G})}(\1,\1)\to \Ext^\ast_{\Coh(\mcl{G})}(\1,\1)$ is surjective modulo nilpotents, and in particular induces a surjection onto the reduced $\pi$-invariant cohomology.
\end{proof}

An easy example where we have such a normal embedding $\mcl{G}\to \mcl{H}$ is the case of a Frobenius kernel $\mcl{G}=\mbb{G}_{(r)}$ in a smooth algebraic group $\mbb{G}$.  In this case we can simply take $\mcl{H}=\mbb{G}$ and observe the normal embedding $\mbb{G}_{(r)}\to \mbb{G}$.  Note that, although the classification of thick ideals in the stable category of sheaves for such $\mbb{G}_{(r)}$ is indeed known \cite{stevenson13}, it is not clear how to deduce the above central generation result from earlier works on the subject.  We record this specific finding.

\begin{corollary}
For any Frobenius kernel $\mbb{G}_{(r)}$ in a smooth algebraic group $\mbb{G}$, all thick ideals in the stable category $\stab(\Coh(\mbb{G}_{(r)}))$ are centrally generated.
\end{corollary}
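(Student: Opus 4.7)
The plan is to deduce this corollary as a direct application of Corollary~\ref{cor:norm} to the inclusion $\mbb{G}_{(r)} \hookrightarrow \mbb{G}$. Since Corollary~\ref{cor:norm} has already reduced the question of central generation for $\stab(\Coh(\mcl{G}))$ to the existence of a normal embedding of $\mcl{G}$ into some smooth algebraic group, essentially all that remains is to verify that the tautological inclusion of the $r$-th Frobenius kernel into its ambient group plays this role.

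To that end, I would first recall that $\mbb{G}$ is smooth by hypothesis, so the target group in the embedding $\mbb{G}_{(r)} \hookrightarrow \mbb{G}$ is of the type demanded by Corollary~\ref{cor:norm}. Second, I would observe that the Frobenius kernel $\mbb{G}_{(r)}$ is by construction the scheme-theoretic kernel of the $r$-th iterated Frobenius homomorphism $F^r: \mbb{G} \to \mbb{G}^{(r)}$. As a kernel of a morphism of algebraic group schemes, $\mbb{G}_{(r)}$ is automatically a normal closed subgroup scheme of $\mbb{G}$. Equivalently, for every $k$-algebra $R$ and every $g \in \mbb{G}(R)$, conjugation by $g$ commutes with $F^r$, so it preserves the defining kernel condition. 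This identifies the embedding $\mbb{G}_{(r)} \hookrightarrow \mbb{G}$ as a normal embedding into a smooth algebraic group, and Corollary~\ref{cor:norm} applies verbatim.

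There is essentially no obstacle in this argument beyond invoking the normality of Frobenius kernels, which is standard. The genuine content lies upstream, in Corollary~\ref{cor:norm}: the formality-type results of \cite{negron} which lift the deformation map $\mfk{d}: A_Z \to \Ext^\ast_{\Coh(\mcl{G})}(\1,\1)$ to a map out of the cohomology of the Drinfeld center, together with the identification of thick ideals through the multiplicative support theory $(\mbb{P}(\mcl{L}/\pi), \supp_{\mcl{L}/\pi})$ from Theorem~\ref{thm:G_classification}. It is worth emphasizing, as the paper does, that although Stevenson's results \cite{stevenson13} already classify thick ideals in $\stab(\Coh(\mbb{G}_{(r)}))$, central generation of those ideals does not appear to follow from such a classification alone; it requires the comparison with the Drinfeld center provided by the machinery set up here.
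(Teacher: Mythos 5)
Your proposal is correct and follows exactly the paper's argument: the paper likewise applies Corollary~\ref{cor:norm} to the embedding $\mbb{G}_{(r)}\to \mbb{G}$, taking $\mcl{H}=\mbb{G}$ and using that the Frobenius kernel is normal in its ambient smooth group. Your added remark on why normality holds (kernel of $F^r$) is a standard elaboration consistent with the paper's brief treatment.
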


We provide some additional examples which are not covered by Corollary \ref{cor:norm}.

\begin{example}
Consider $\mbb{G}$ any smooth algebraic group, and the finite group scheme
\[
\mcl{G}=(\mbb{G}_{(r)}^n)\rtimes S_n,
\]
where the symmetric group acts by permutation.  We consider the closed embedding $\mcl{G}_o=\mbb{G}_{(r)}^n\to \mbb{G}^n$ and corresponding integration.  In this case one has
\[
\Ext^\ast_{\Coh(\mcl{G})}(k,k)=A_Z\ot \wedge^\ast(\mfk{g}^\ast)=\Sym((\mfk{g}^\ast)^{(r)})\ot\wedge^\ast(\mfk{g}^\ast)
\]
as a $\pi=S_n$-algebra, where $\mfk{g}=\mrm{Lie}(\mbb{G}^n)=\mrm{Lie}(\mbb{G})^{\times n}$ and $S_n$ acts by permutation.  By the equivariant formality result of \cite[\S 5.2]{negron}, we have that the image of $Z(\Coh(\mcl{G}))$-extensions in the reduced cohomology $\Ext^\ast_{\Coh(\mcl{G})}(k,k)_{\rm red}$ is precisely
\[
\Ext^\ast_{\Coh(\mcl{G})}(k,k)^{S_n}_{\rm red}=A_Z^{S_n}.
\]
We therefore apply Theorem \ref{thm:center_genII} to observe central generation of all ideals in the stable category.
\end{example}

\begin{example}
Consider $\mcl{G}_o=\mbb{G}_{a(1)}^2$, where $\mbb{G}_a$ is the additive group, and consider also the cyclic group $\mbb{Z}/p\mbb{Z}$ with chosen generator $\sigma$.  This group scheme has algebra of functions $\O(\mcl{G}_o)=k[t_1,t_2]/(t_1^p,t_2^p)$.  We consider the action of $\mbb{Z}/p\mbb{Z}$ on $\mbb{G}_{a(1)}^2$ defined by
\[
\sigma(t_1)=t_1+t_2,\ \ \sigma(t_2)=t_2,
\]
and the smash product $\mcl{G}=\mcl{G}_o\rtimes \mbb{Z}/p\mbb{Z}$.  As with the previous example, this action extends to the ambient group $\mbb{G}_{a}^2$.  We therefore obtain an equivariant inclusion
\begin{equation}\label{eq:1531}
A_Z\to \Ext^\ast_{\Coh(\mcl{G})}(k,k)
\end{equation}
which reduces to an identification $\Ext^\ast_{\Coh(\mcl{G})}(k,k)^{\mbb{Z}/p\mbb{Z}}_{\rm red}=A_Z^{\mbb{Z}/p\mbb{Z}}$.  Since the map \eqref{eq:1531} factors through the cohomology of the center $Z(\Coh(\mcl{G}))$ \cite[\S 5.2/7.1]{negron}, we verify the hypotheses of Theorem \ref{thm:center_genII} and observe central generation of all ideals. 
\end{example}

At this point, every example which is amenable to direct calculation can be shown to satisfy the hypotheses of Theorem \ref{thm:center_genII}, and can therefore be shown to have all thick ideals in the stable category centrally generated.  We provide a more tractable version of Question \ref{q:quest_gen}.

\begin{question}\label{q:quest_G}
For an arbitrary finite group scheme $\mcl{G}$, are all ideals in $\stab(\Coh(\mcl{G}))$ centrally generated?
\end{question}

We would guess that the answer to this question is yes.  We expect that the methods which would be employed in a resolution of Question \ref{q:quest_G} may be as interesting as the resolution itself.

\subsection{A comment on the sheaf of rings}
\label{sect:not_lr}

Although we have not discussed the topic at length, let us make one comment about the sheaf of rings on prime spectra in the non-braided setting.  We expect that the sheaf of rings on the spectrum $\cSpec(\Coh(\mcl{G}))\cong_{homeo} \mbb{P}(\mcl{L}/\pi)$ should explicitly be the pushforward of the structure sheaf $\O_{\mbb{P}(\mcl{L})}$ along the projection $p:\mbb{P}(\mcl{L})\to \mbb{P}(\mcl{L}/\pi)$.  So the germs of this ringed space should be semi-local, with $|\pi|$-many simples generically, not local.
\par

Of course, it is well-established that the spectrum of a tensor triangulated category $\mcl{T}$ in the braided setting \emph{is} a locally ringed space.  However, as can be seen in the proof at \cite[Theorem 4.5]{balmer10}, one uses the fact that the quotient $\mcl{T}/\mcl{P}$ by a prime admits no zero divisors in this setting.  In the non-braided setting, products of non-zero objects in such a quotient by a thick prime can still vanish.  So in principle (and we're suggesting in actuality) the spectrum of a non-braided category can be non-locally ringed.

\section{Closing remarks on one-sided ideals}
\label{sect:onesided}

In this work we have not considered one-sided ideals.  This omission is intentional, as ``spectra" of one-sided ideals are rather disorderly individuals in general, and support seems to be better suited to the classification of two-sided ideals.  However, in the case of $\Coh(\mcl{G})$, one observes some intriguing phenomena with respect to one-sided ideals in the stable category.
\par

Consider $\mcl{G}$ a (generally non-connected) finite group scheme.  First, we note that cohomological support $\supp^{coh}_\msf{Y}$ for $\Coh(\mcl{G})$ produces one-sided ideals in the stable category.  Specifically, for any specialization closed subset $\Theta\subset \msf{Y}$ one produces a corresponding ideal
\[
\msc{M}_\Theta:=\{V\in \stab(\Coh(\mcl{G})):\supp_\msf{Y}^{coh}(V)\subset \Theta\}.
\] 
This assignment provides an \emph{injective} map
\[
\begin{array}{l}
\msc{M}_?:\\
\{\text{Specialization closed subsets in }\msf{Y}\}\to \{\text{Thick one-sided ideals in }\stab(\Coh(\mcl{G}))\}.
\end{array}
\]
We ask the following.

\begin{question}
Is the above map $\msc{M}_?$ a bijection.  That is to say, are one-sided ideals in $\stab(\Coh(\mcl{G}))$ classified by cohomological support?  More to the point, does cohomological support actually classify \emph{anything} for $\Coh(\mcl{G})$, at general $\mcl{G}$?
\end{question}

As remarked above, cohomological support for $\Coh(\mcl{G})$, which is the same as hypersurface support, is not stable under duality in this case
\[
\supp^{coh}_\mbb{P}(V)\neq \supp^{coh}_\mbb{P}(V^\ast).
\]
This failure of support to be stable under duality obstructs attempts to deal with this classification problem via local cohomology functors.  Specifically, the methods employed in the proof of Proposition \ref{prop:classify} break down, due to these incompatibilities between support and duality.
\par

One can ask a similar question about one-sided ideals for the stable category of representations for a quantum complete intersection, in which case support \emph{is} stable under duality.  So, in this case one might believe that one-sided ideals are (also) classified by cohomological support.  However, one should ask themselves at this point if we should even \emph{expect} such phenomena to occur in general.  That is to say, should we expect that (a generic version of) the map $\msc{M}_?$ classifies one-sided ideals in the stable categories $\stab(\msf{u})$ for general $\msf{u}$.  In order to investigate this question, the more extreme example of coherent sheaves on a non-connected group scheme $\mcl{G}$ seems to provide the greater possibility of insight.

\renewcommand{\O}{\oldO}
\bibliographystyle{abbrv}
%\bibliography{infinite.bib}

\end{document}